\newtheorem{theorem}{Theorem}[section]
\newtheorem{lemma}[theorem]{Lemma}
\newtheorem{example}[theorem]{Example}
\newtheorem{remark}[theorem]{Remark}
\newtheorem{thm}{Theorem}[section]
\newtheorem{cor}[thm]{Corollary}
\newtheorem{prop}[thm]{Proposition}
\newtheorem{rem}[thm]{Remark}
\numberwithin{equation}{section}
\definecolor{darkslategray}{rgb}{0.18, 0.31, 0.31}
\definecolor{warmblack}{rgb}{0.0, 0.26, 0.26}
\definecolor{astral}{RGB}{46,116,181}
\journal{....................... }
\begin{document}
	\begin{frontmatter}
		\title{ \textcolor{warmblack}{\bf New $\mathbb{A}$-numerical radius equalities and inequalities for certain operator matrices and applications}}
		\author[label1]{Soumitra Daptari}\ead{daptarisoumitra@gmail.com }
			\author[label2]{Fuad Kittaneh\corref{cor2}}\ead{fkitt@ju.edu.jo}
			
			\address[label2]{Department of Mathematics, The University of Jordan, Amman, Jordan}
		
		\author[label1]{Satyajit Sahoo}\ead{ssahoo@niser.ac.in, satyajitsahoo2010@gmail.com}

		\address[label1]{School of Mathematical Sciences, National Institute of Science Education and Research Bhubaneswar, India}

		\cortext[cor2]{Corresponding author}

		\begin{abstract}
			\textcolor{warmblack}{
			The main goal of this article is to establish  several new  $\mathbb{A}$-numerical radius equalities and inequalities for $n\times n$ cross-diagonal, left circulant, skew left circulant operator matrices, where $\mathbb{A}$ is the $n\times n$ diagonal operator matrix whose diagonal entries are positive bounded operator $A$. Also, we introduce two new matrices called left imaginary circulant operator matrix and left imaginary skew circulant operator matrix and present their $\mathbb{A}$-numerical radii. Certain $\mathbb{A}$-numerical radii of general $n\times n$ operator matrices are obtained.  Some special cases of our results lead to the results of earlier works in the literature, which shows that our results are more general. Applications of our results are established through some interesting examples. We also provide a concluding section by posing a problem for future research.   }
		\end{abstract}
		
		\begin{keyword}
			$\mathbb{A}$-numerical radius; positive operator; semi-inner product; inequality; left circulant operator matrix; skew left circulant operator matrix.\\
   Mathematics subject classification (2010):  47A12, 47A30, 47A63, 15A60

		\end{keyword}
	\end{frontmatter}

	\section{Introduction and Preliminaries }\label{intro}
	Let $\mathcal{H}$ be a complex Hilbert space with inner product $\langle \cdot,\cdot\rangle$  and the corresponding norm $\|\cdot\|$.
	Let  $\mathcal{B}(\mathcal{H})$ be the $C^*$-algebra of all bounded linear operators on $\mathcal{H}$. We let $\mathbb{H}=\displaystyle\bigoplus_{i=1}^n\mathcal{H}$ be the direct sum of $n$ copies of $\mathcal{H}$. If $T_{ij}, 1\leq i, j\leq n$ are operators in $\mathcal{B(H)}$, then operator matrix $\mathbb{T}=[T_{i,j}]$ can be defined on $\mathbb{H}$ by $$ \mathbb{T}x=\begin{bmatrix}
	\displaystyle\sum_{j=1}^{n}T_{1j}x_j\\
	\vdots\\
	\displaystyle\sum_{j=1}^{n}T_{nj}x_j
	\end{bmatrix}$$ for every vector $x=[x_1,\dots, x_n]^T\in \mathbb{H}$. If $S_i\in \mathcal{B(H)},  i=1,\dots,n$, we denote their direct sum by $\displaystyle\bigoplus_{i=1}^n S_i$, which is the $n\times n$ block diagonal operator matrix,
	
	$$ \displaystyle\bigoplus_{i=1}^nS_i =\begin{bmatrix}
	S_1 & &&\\
	& S_2&&\\
	&& \ddots\\
	&&& S_n
	\end{bmatrix}.$$

If $T_i\in \mathcal{B(H)}, i=1,\dots,n$, then the circulant operator matrix $\mathbb{T}_{circ}=\mbox{circ}(T_1,\dots,T_n)$ is the $n\times n$ matrix whose first row has entries $T_1, \dots, T_n$ and the other rows are obtained by successive cyclic permutations of these entries. The left-circulant operator matrix $\mathbb{T}_{lcirc}=\mbox{lcirc}(T_1,\dots,T_n)$ is the $n\times n$ matrix whose first row has entries $T_1, \dots, T_n$ and the other rows are obtained by successive {\it left shift} cyclic permutations of these entries. It is well-known that $\mbox{lcirc}(T_1,\dots,T_n)=\mbox{circ}(T_n,\dots,T_1)J,$ where $J=$off-diag$(I, \dots, I).$ 
 
 The skew circulant operator matrix $\mathbb{T}_{scirc}=\mbox{scirc}(T_1,\dots,T_n)$ is the $n\times n$ circulant followed by a change in sign to all the elements below the main diagonal. It is well-known that every skew circulant operator matrix is unitarily equivalent to a circulant operator matrix. For more details, one can visit the excellent book \cite{Davis}. The skew left circulant operator matrix $\mathbb{T}_{slcirc}=\mbox{slcirc}(T_1,\dots,T_n)$ is the $n\times n$ left circulant followed by a change in sign to all the elements under the off-diagonal. Clearly, $ \mbox{slcirc}(T_1,\dots,T_n)=\mbox{scirc}(T_n,\dots,T_1)J$. Note that circulant matrices with complex entries are normal and hence their norms and numerical radii are equal. But, left circulant matrices are not normal in general.\\
Motivated by the notions of imaginary circulant operator matrix and imaginary skew circulant operator matrix in \cite{KITSAT}, we have defined the following two operator matrix, say left imaginary circulant operator matrix and left imaginary skew circulant operator matrix.
 
	If $T_i\in \mathcal{B(H)}, i=1,\dots,n$, then the imaginary left  circulant operator matrix $\mathbb{T}_{lcirc_i}=\mbox{lcirc}_i(T_1,\dots,T_n)$ is the $n\times n$ matrix whose first row has entries $T_1, \dots, T_n$ and the other rows are obtained by successive cyclic permutations of $i$-multiplies of these entries, i.e.,
	$\mathbb{T}_{lcirc_i}=\begin{bmatrix}
	T_{1} & T_{2}  & \cdots  &T_{n-1}  & T_{n}\\
	T_{2}  & T_{3} & \cdots  & T_{n-1}&iT_1\\
	T_{3}  & T_{4} & \ddots  &  iT_{1}&iT_2\\
	\vdots & \vdots & \ddots & \vdots&\vdots \\
	T_{n}  & iT_{1}  &\cdots& i T_{n-2} & iT_{n-1}
	\end{bmatrix}$.
	The imaginary skew left circulant operator matrix $\mathbb{T}_{lscirc_i}=\mbox{lscirc}_i(T_1,\dots,T_n)$ is the $n\times n$ imaginary left circulant followed by a change in sign to all the elements below the off-diagonal. Thus,
	$\mathbb{T}_{slcirc_i}=\begin{bmatrix}
	T_{1} & T_{2}  & \cdots  &T_{n-1}  & T_{n}\\
	T_{2}  & T_{3} & \cdots  & T_{n-1}&-iT_1\\
	T_{3}  & T_{4} & \ddots  &  -iT_{1}&-iT_2\\
	\vdots & \vdots & \ddots & \vdots&\vdots \\
	T_{n}  & -iT_{1}  &\cdots& -i T_{n-2} & -iT_{n-1}
	\end{bmatrix}$.
	
	Let $\mathcal{L}$ be subspace of $\mathcal{H}$, we denote use the symbol  $\overline{\mathcal{L}}$
	to denote the norm closure of $\mathcal{L}$ in the norm topology of
	$\mathcal{H}$. We also denote the orthogonal projection onto a closed linear subspace $\mathcal{M}$ of $\mathcal{H}$
	by $P_{\mathcal{M}}$.  The identity operator and the null operator are designated with the notations $I$ and $O$, respectively on $\mathcal{H}$. Let $A\in\mathcal{B}(\mathcal{H})$, we use the notations $\mathcal{R}(A)$, $\mathcal{N}(A)$, $A^{*}$ for range, null space, and 
	adjoint of $A$, respectively. 
	An  operator $A\in\mathcal{B}(\mathcal{H})$  is called {\it positive} if $\langle Ax, x\rangle \geq 0$ for all $x \in \mathcal{H}$, and is called {\it strictly positive} if  $\langle Ax, x\rangle > 0$ for all non-zero $x\in \mathcal{H}$. We denote a positive (strictly positive) operator $A$ by $A \geq 0$ ($A > 0$).  Throughout this paper, we assume that $A \in\mathcal{B}(\mathcal{H})$ is a positive operator, and $\mathbb{A} \in\mathcal{B}(\displaystyle\bigoplus_{i=1}^n\mathcal{H})$ is an $n\times n$ diagonal operator matrix whose diagonal entries are the positive operator $A$. Then, this operator $A$ defines a positive semidefinite sesquilinear form: $$\langle \cdot,\cdot\rangle_A: \mathcal{H}\times\mathcal{H}\rightarrow\mathbb{C}, ~~ \langle x, y \rangle_A=\langle Ax,y\rangle, ~~x,y\in\mathcal{H}.$$ 
	Let $\|\cdot\|_A$ denote the seminorm on $\mathcal{H}$ induced by $\langle \cdot, \cdot \rangle_A,$ i.e. $\|x\|_A=\sqrt{\langle x, x \rangle_A}$ for all $x \in \mathcal{H}.$ 
	Note that $\|x\|_A=0$ if and only if $x\in \mathcal{N}(A)$.
	And, $\|x\|_A$ is a norm
	if and only if $A$ is one-to-one (or $A>0$). Also, $(\mathcal{H}, \|\cdot\|_A)$ is complete if and only if  $\mathcal{R}(A)$ is closed in $\mathcal{H}.$ \\

	The $A$-operator seminorm of $T \in\mathcal{B}\mathcal{(H)}$, denoted by $\|T\|_A$'
	is defined as follows:
	$$\|T\|_A=\sup_{x\in \overline{\mathcal{R}(A)},~x\neq 0}\frac{\|Tx\|_A}{\|x\|_A}<\infty.$$
	There is another equivalent definition of $\|T\|_A$, see \cite{Zam}.
	Let $\mathcal{B}^A\mathcal{(H)}$ signify the set of all bounded linear operators on $\mathcal{H}$ whose $A$-operator seminorm is finite.
	It can be shown that $\mathcal{B}^A\mathcal{(H)}$ is not a subalgebra of $\mathcal{B(H)}$, and $\|T\|_A=0$ if and only if $T^*AT=O.$ For $T\in\mathcal{B}^A\mathcal{(H)},$ we have 
	$$\|T\|_A=\sup \{|\langle Tx,y\rangle_A|: x,y\in \overline{\mathcal{R}(A),} ~\|x\|_A=\|y\|_A=1\}.$$
	If $AT\geq 0$, then the operator $T$ is called {\it $A$-positive}.  Note that if $T$ is $A$-positive, then 
	$$\|T\|_A=\sup \{\langle Tx,x\rangle_A: x\in \mathcal{H},  \|x\|_A=1\}.$$
	An operator $X\in \mathcal{B(H)}$ is called an {\it $A$-adjoint operator} of $T\in \mathcal{B(H)}$ if  $\langle Tx,y\rangle_A=\langle x, Xy\rangle_A$  for every $x,y\in \mathcal{H},$ i.e., if $AX=T^*A.$
	By \cite{Doug, Manu}, the existence of an $A$-adjoint operator is not guaranteed.  An operator $T\in \mathcal{B(H)}$ may admit none, one or many $A$-adjoints. An $A$-adjoint  of an operator $T\in \mathcal{B}\mathcal{(H)}$ exists if and only if $\mathcal{R}(T^*A)\subseteq \mathcal{R}(A)$. The set of all operators which admit $A$-adjoints is denoted by  $\mathcal{B}_A\mathcal{(H)}$. 
	Note that $\mathcal{B}_A\mathcal{(H)}$ is a subalgebra of $\mathcal{B(H)}$, which is neither closed nor dense in $\mathcal{B(H)}.$ Moreover, the following inclusions $\mathcal{B}_A\mathcal{(H)}\subseteq \mathcal{B}^A\mathcal{(H)}\subseteq\mathcal{B}\mathcal{(H)}$ hold with equality if $A$ is injective and has a closed range.\\
	
	If $T\in \mathcal{B}_A\mathcal{(H)},$ the reduced solution of the equation $AX=T^*A$ is a distinguished $A$-adjoint operator of $T,$ which is denoted by $T^{\#_A}$ (see \cite{ARIS2,Mos}). Note that $T^{\#_A}=A^\dagger T^* A$, where $A^\dagger$ is the Moore–Penrose inverse  \cite{Nashed} of $A$.   Recall that $A^\dagger:R(A)\bigoplus R(A)^{\perp} \longrightarrow \mathcal{H}$ is the unique 
	operator satisfying
	$AA^\dagger A = A$,~  $A^\dagger A A^\dagger = A^\dagger$,~$A^\dagger A= P_{N(A)^{\perp}}$,~ $A A^\dagger=P_{\overline{\mathcal{R}(A)}}|_{R(A)\bigoplus R(A)^{\perp}}$.
	If $T\in \mathcal{B}_A(\mathcal{H}),$ then $AT^{\#_A}=T^*A$, $\mathcal{R}(T^{\#_A})\subseteq \overline{\mathcal{R}(A)}$ and $\mathcal{N}(T^{\#_A})=\mathcal{N}(T^*A)$ (see \cite{Doug}). 
	An operator $T\in \mathcal{B(H)}$ is said to be {\it $A$-selfadjoint} if $AT$ is selfadjoint, i.e., if $AT=T^*A.$ Observe that if $T$ is $A$-selfadjoint, then $T\in \mathcal{B}_A(\mathcal{H}).$  The next example shows that $T\neq T^{\#_A}.$ 
	Let $A=\begin{bmatrix}
	4&2\\2&1
	\end{bmatrix}$ and $T=\begin{bmatrix}
	2&1\\4&2
	\end{bmatrix}.$ Then $T$ is $A$-selfadjoint, but $T^{\#_A}=\begin{bmatrix}
	16/5&8/5\\8/5&4/5
	\end{bmatrix}\neq T.$  
	For $T\in\mathcal{B}_A(\mathcal{H}),$ $T=T^{\#_A}$   if and only if $T$ is $A$-selfadjoint and $\mathcal{R}(T)\subseteq \overline{\mathcal{R}(A)}.$ If $T\in \mathcal{B}_A(\mathcal{H}),$ then $T^{\#_A}\in \mathcal{B}_A(\mathcal{H}),$  $(T^{\#_A})^{\#_A}=P_{\overline{\mathcal{R}(A)}}TP_{\overline{\mathcal{R}(A)}},$  and $\left((T^{\#_A})^{\#_A}\right)^{\#_A}=T^{\#_A}.$ Also, $T^{\#_A}T$ and $TT^{\#_A}$ are  $A$-positive operators, and
	\begin{align}\label{ineq0}
	\|T^{\#_A}T\|_A=\|TT^{\#_A}\|_A=\|T\|_A^2=\|T^{\#_A}\|_A^2.
	\end{align}
	An operator $U\in \mathcal{B}_A(\mathcal{H})$ is said to be {\it $A$-unitary} if $\|Ux\|_A=\|U^{\#_A}x\|_A=\|x\|_A$ for all $x\in \mathcal{H}.$ 
	For $T,S\in \mathcal{B}_A(\mathcal{H}),$ we have $(TS)^{\#_A}=S^{\#_A}T^{\#_A},$  $(T+S)^{\#_A}=T^{\#_A}+S^{\#_A},$ $\|TS\|_A\leq \|T\|_A\|S\|_A$ and $\|Tx\|_A\leq \|T\|_A\|x\|_A$ for all $x\in \mathcal{H}.$  The real and imaginary parts of an operator $T\in \mathcal{B}_A(\mathcal{H})$  are $Re_A(T)=\frac{T+T^{\#_A}}{2}$ and $Im_A(T)=\frac{T-T^{\#_A}}{2i}$.  
	The interested reader may refer to \cite{ARIS2} for further properties of operators on semi-Hilbert spaces.\\
	
	The {\it numerical radius} of $T\in
	\mathcal{B(H)}$ is denoted by $w(T)$,  which is
	defined as $$ w(T)=\displaystyle\sup\{|\langle Tx, x \rangle|: x\in \mathcal{H}, \|x\|=1 \}.$$
	It is well-known that
	$w(\cdot)$ defines a norm on $\mathcal{B}(\mathcal{H})$, and is equivalent to the
	usual operator norm $\|T\|=\displaystyle \sup  \{ \|Tx \|: x\in \mathcal{H}, \|x\|=1 \}.$ In fact, for
	every $T \in \mathcal{B(H)}$, 
	\begin{align}\label{p3100}
	\frac{1}{2}\|T\|\leq w(T)\leq \|T\|.
	\end{align}
 The first inequality becomes an equality if $T^2=0$. The second inequality becomes an equality if $T$ is normal.
	There is a plethora of literature (for example, \cite{MBKS,HirKit,TY,MOS_SAT,SND,SND1,SND2}) that studies different generalizations, refinements and applications of numerical radius inequalities, to which we refer the interested reader. In 2012, Saddi \cite{Saddi} introduced the {\it $A$-numerical radius} of $T$ for $T\in \mathcal{B(H)}$, which is denoted as $w_A(T)$, and is defined as follows:
	\begin{equation*}\label{eqn1a}
	w_A(T)=\sup\{|\langle Tx,x\rangle_A|:x\in \mathcal{H}, \|x\|_A=1\}.
	\end{equation*}
	It then follows that
	\begin{equation}\label{eqn_00000001.4}
	w_A(T)=w_A(T^{\#_A}) \mbox{  for   any  }   T\in\mathcal{B}_A(\mathcal{H}). 
	\end{equation}
	If $T\in \mathcal{B}_A(\mathcal{H})$ and $U$ is $A$-unitary, then $w_A(U^{\#_A}TU)=w_A(T).$ In 2019, Zamani \cite{Zam} came up with the following new formula for computing the numerical radius of $T\in \mathcal{B}_A\mathcal{(H)}$:
	\begin{align*}
	w_A(T)=\sup_{\theta\in \mathbb{R}}\left\|\frac{e^{i\theta}T+(e^{i\theta}T)^{\#_A}}{2}\right\|_A.
	\end{align*}
	He then extended the renowned inequality \eqref{p3100} using the $A$-numerical radius of $T$, and the same is produced below:
	\begin{align}\label{ineq1}
	\frac{1}{2}\|T\|_A\leq w_A(T)\leq \|T\|_A.
	\end{align}
 Moreover, if $T^2=0$ then
\begin{align}\label{eq1.5}
    w_A(T)=\frac{1}{2}\|T\|_A.
\end{align}
	Furthermore,  if $T$ is $A$-selfadjoint, then $w_A(T)=\|T\|_A$.
	In 2019,  Moslehian {\it et al.} \cite{MOS}  pursued the study of the $A$-numerical radius and established some $A$-numerical radius inequalities.
	In 2020, Bhunia {\it et al.}  \cite{PINTU} obtained several $\mathbb{A}$-numerical radius inequalities. 
	Further generalizations and refinements of $A$-numerical radius inequalities are discussed in \cite{Pintu1, Pintu2, Feki, {NSD}}. For more results on $\mathbb{A}$-numerical radius inequalities, we refer the reader to \cite{Feki,Feki01,faiot,feki03,Nirmal2,XYZ,SS, KITSAT}. In 2020, the concept of the $A$-spectral radius of $A$-bounded operators was introduced by Feki in \cite{Feki01} as follows:
		$r_A(T):=\displaystyle\inf_{n\geq 1}\|T^n\|_A^{\frac{1}{n}}=\displaystyle\lim_{n\to\infty}\|T^n\|_A^{\frac{1}{n}}.$ 
  The main inspiration of our results comes from the second and third author's earlier work \cite{KITSAT}.
 Kittaneh et al. \cite{KITSAT} established $\mathbb{A}$-numerical radius equalities for $n\times n$ circulant, skew circulant, imaginary circulant, imaginary skew circulant, tridiagonal, and anti-tridiagonal operator matrices. The results of the current article is a continuation of earlier work of a recent article by Kittaneh et al.\cite{KITSAT}.
 
In this aspect, the rest of the paper is organized as follows. Motivated by the work of  Kittaneh and Sahoo \cite{KITSAT}, we establish certain $\mathbb{A}$-numerical radius equalities for $n\times n$ cross-diagonal operator matrices in Section 2. In Section 3, we obtain $\mathbb{A}$-numerical radius equalities and inequalities for $n\times n$ left circulant, skew left circulant operator matrices. Some special cases of our results have been given in this section. $\mathbb{A}$-numerical radii of some particular cases of our newely defined matrices i.e.,  left imaginary circulant operator matrix and left imaginary skew circulant operator matrix are consider in Section 4. Also, we present new upper bounds for the $\mathbb{A}$-numerical radii of general $n\times n$ operator matrices. Some applications of our results are established in Section 5. Finally, we end up with a conclusion section, which may produce new problems for future investigation.

	We need the following lemmas to prove our results. The following lemma is already proved by Bhunia et al. \cite{PINTU} for the case strictly positive operator $A$. Very recently, the same result was proved by Rout et al. \cite{Nirmal2} by dropping the assumption that $A$ is strictly positive, which is stated next for our purpose.
	
	\begin{lemma}\label{lem0001}\textnormal{[Lemma 2.4, \cite{Nirmal2}]} 
		Let $T_1, T_2\in {\mathcal{B}}_A(\mathcal{H}).$ Then 
		\begin{enumerate}
			\item [\textnormal{(i)}]
			$w_{\mathbb{A}}\left(\begin{bmatrix}
			T_1 & O\\
			O & T_2
			\end{bmatrix}\right)= \max\{w_A(T_1), w_A(T_2)\}.$\\
			\item [\textnormal{(ii)}] $w_{\mathbb{A}}\left(\begin{bmatrix}
			O & T_1\\
			T_2 & O
			\end{bmatrix}\right)=w_{\mathbb{A}}\left(\begin{bmatrix}
			O & T_2\\
			T_1 & O
			\end{bmatrix}\right).$\\
			\item [\textnormal{(iii)}] $w_{\mathbb{A}}\left(\begin{bmatrix}
			O & T_1\\
			e^{i\theta}T_2 & O
			\end{bmatrix}\right)=w_{\mathbb{A}}\left(\begin{bmatrix}
			O & T_1\\
			T_2 & O
			\end{bmatrix}\right)$ for~any~$\theta\in\mathbb{R}$.\\
			\item [\textnormal{(iv)}]  $w_{\mathbb{A}}\left(\begin{bmatrix}
			T_1 & T_2\\
			T_2 & T_1
			\end{bmatrix}\right)=\max\{w_A(T_1+T_2),w_A(T_1-T_2)\}.$
			In particular, $w_{\mathbb{A}}\left(\begin{bmatrix}
			O & T_1\\
			T_1 & O
			\end{bmatrix}\right)=w_A(T_1).$
		\end{enumerate}
	\end{lemma}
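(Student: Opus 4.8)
The plan is to reduce all four statements to a single scalar expansion of the $\mathbb{A}$-semi-inner product on $\mathbb{H}=\mathcal{H}\oplus\mathcal{H}$. Writing a vector as $x=(x_1,x_2)$, the two identities $\|x\|_{\mathbb{A}}^2=\langle\mathbb{A}x,x\rangle=\|x_1\|_A^2+\|x_2\|_A^2$ and $\langle\mathbb{T}x,x\rangle_{\mathbb{A}}=\sum_{i,j}\langle T_{ij}x_j,x_i\rangle_A$ turn each assertion into an elementary manipulation of $\langle\cdot,\cdot\rangle_A$. I would treat (i) as the base case, deduce (iv) from it by an orthogonal-type change of variables, and obtain (ii) and (iii) by explicit norm-preserving substitutions. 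Throughout I would avoid $\mathbb{A}$-adjoints and the projection $P_{\overline{\mathcal{R}(A)}}$, working only with the form $\langle\cdot,\cdot\rangle_{\mathbb{A}}$, so that the argument never uses injectivity of $A$ (this is precisely the point on which \cite{Nirmal2} improves \cite{PINTU}).

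For (i), with $\mathbb{T}=\mathrm{diag}(T_1,T_2)$ one has $\langle\mathbb{T}x,x\rangle_{\mathbb{A}}=\langle T_1x_1,x_1\rangle_A+\langle T_2x_2,x_2\rangle_A$. The upper bound follows from the triangle inequality together with $|\langle T_ix_i,x_i\rangle_A|\le w_A(T_i)\|x_i\|_A^2$ (valid even when $\|x_i\|_A=0$, since then $Ax_i=0$ forces $\langle T_ix_i,x_i\rangle_A=0$), giving $|\langle\mathbb{T}x,x\rangle_{\mathbb{A}}|\le\max\{w_A(T_1),w_A(T_2)\}(\|x_1\|_A^2+\|x_2\|_A^2)$. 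The reverse bound comes from feeding in the test vectors $(x_1,0)$ and $(0,x_2)$ and taking suprema, which recovers each $w_A(T_i)$ separately.

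For (iv) I would introduce $W=\tfrac1{\sqrt2}\begin{bmatrix}I&I\\ I&-I\end{bmatrix}$, which satisfies $W^2=I$, $W^*=W$ and $\mathbb{A}W=W\mathbb{A}$; hence $W$ is $\mathbb{A}$-selfadjoint and $\mathbb{A}$-isometric ($W^*\mathbb{A}W=\mathbb{A}$), so $y\mapsto Wy$ is an $\mathbb{A}$-norm-preserving bijection of $\mathbb{H}$. A direct block multiplication gives $WMW=\mathrm{diag}(T_1+T_2,\,T_1-T_2)$ for $M=\begin{bmatrix}T_1&T_2\\ T_2&T_1\end{bmatrix}$, and since $M=W(WMW)W$, moving one $W$ across the form via $\mathbb{A}$-selfadjointness yields $\langle Mx,x\rangle_{\mathbb{A}}=\langle\mathrm{diag}(T_1+T_2,T_1-T_2)\,Wx,Wx\rangle_{\mathbb{A}}$. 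Taking the supremum over $\mathbb{A}$-unit vectors and invoking (i) produces $\max\{w_A(T_1+T_2),w_A(T_1-T_2)\}$; the particular case is the choice of diagonal block $O$ and off-diagonal block $T_1$, using $w_A(-T_1)=w_A(T_1)$.

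Parts (ii) and (iii) are pure substitutions. For (ii), the swap $(x_1,x_2)\mapsto(x_2,x_1)$ preserves $\|\cdot\|_{\mathbb{A}}$ and carries $\langle\begin{bmatrix}O&T_1\\ T_2&O\end{bmatrix}x,x\rangle_{\mathbb{A}}$ to $\langle\begin{bmatrix}O&T_2\\ T_1&O\end{bmatrix}x,x\rangle_{\mathbb{A}}$, so the two suprema coincide. For (iii), expanding gives $\langle T_1x_2,x_1\rangle_A+e^{i\theta}\langle T_2x_1,x_2\rangle_A$; the coordinate phase rotation $x_2\mapsto e^{i\theta/2}x_2$ (norm-preserving) equalizes the two phases, factoring out a global $e^{i\theta/2}$ that disappears under $|\cdot|$ and reduces the supremum to the case $\theta=0$. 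The only place demanding care, and the main obstacle, is keeping every reduction at the level of the semi-inner product rather than of $\mathbb{A}$-adjoints: with $A$ merely positive the naive conjugation $W^{\#_{\mathbb{A}}}MW$ introduces the projection $P_{\overline{\mathcal{R}(A)}}$, and it is exactly the algebraic identities $W^*\mathbb{A}W=\mathbb{A}$ and $\mathbb{A}W=W\mathbb{A}$ (together with $\|x\|_{\mathbb{A}}^2=\|x_1\|_A^2+\|x_2\|_A^2$ and the parallelogram law for $\|\cdot\|_A$) that let me bypass it.
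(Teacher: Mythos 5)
Your proof is correct, but note that the paper does not prove this lemma at all: it is imported verbatim as [Lemma 2.4] of Rout--Sahoo--Mishra \cite{Nirmal2}, so there is no in-paper argument to compare against. What you have written is a clean, self-contained derivation, and it differs in spirit from how the paper (and \cite{Nirmal2}) handles analogous statements elsewhere, namely by conjugating with an $\mathbb{A}$-unitary $\mathbb{U}$ and invoking $w_{\mathbb{A}}(\mathbb{U}^{\#_{\mathbb{A}}}\mathbb{T}\mathbb{U})=w_{\mathbb{A}}(\mathbb{T})$, which drags in $\mathbb{T}^{\#_{\mathbb{A}}}$ and the projection $P_{\overline{\mathcal{R}(A)}}$ (see the proofs of Theorem 2.1 and Remark 2.2 in the paper). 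Your route stays entirely at the level of the sesquilinear form: (i) by splitting $\|x\|_{\mathbb{A}}^2=\|x_1\|_A^2+\|x_2\|_A^2$ and testing on $(x_1,0)$, $(0,x_2)$; (iv) by the $\mathbb{A}$-isometric involution $W=\tfrac1{\sqrt2}\bigl[\begin{smallmatrix}I&I\\I&-I\end{smallmatrix}\bigr]$, which diagonalizes $M$ to $\mathrm{diag}(T_1+T_2,T_1-T_2)$ and reduces to (i); (ii) and (iii) by the norm-preserving substitutions $(x_1,x_2)\mapsto(x_2,x_1)$ and $x_2\mapsto e^{i\theta/2}x_2$. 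All the individual steps check out: $W^*\mathbb{A}W=\mathbb{A}$ and $\mathbb{A}W=W\mathbb{A}$ do hold because the blocks of $W$ are scalar multiples of $I$, the degenerate case $\|x_i\|_A=0$ is correctly dispatched via $Ax_i=0$, and the phase bookkeeping in (iii) ($e^{i\alpha}$ on the first term, $e^{i(\theta-\alpha)}$ on the second, equalized at $\alpha=\theta/2$) is right. What your approach buys is exactly what you claim: it never needs $T_i\in\mathcal{B}_A(\mathcal{H})$ beyond finiteness of the relevant suprema and never touches $A^\dagger$, so positivity of $A$ without injectivity causes no trouble; the cost is that it is an ad hoc $2\times2$ computation rather than an instance of the general unitary-conjugation machinery the paper reuses for its $n\times n$ results.
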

	\begin{lemma}\label{l002}\textnormal{[Lemma 2.9, \cite{Nirmal2}]} 
		Let $T_1, T_2 \in \mathcal{B}_A(\mathcal{H}).$ Then
		$$w_{\mathbb{A}}\left(\begin{bmatrix}
		T_2 & -T_1\\
		T_1 & T_2
		\end{bmatrix}\right)= \max\{w_A(T_1+iT_2), w_A(T_1-iT_2)\}.$$
	\end{lemma}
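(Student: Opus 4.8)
The plan is to diagonalize the $2\times 2$ operator matrix $\mathbb{T}=\begin{bmatrix} T_2 & -T_1\\ T_1 & T_2\end{bmatrix}$ by conjugation with a fixed scalar unitary, reducing it to block-diagonal form, and then to read off the $\mathbb{A}$-numerical radius from Lemma \ref{lem0001}(i). Structurally, $\mathbb{T}$ is the ``real representation'' of the operator $T_2+iT_1$, so the natural transformation is the one diagonalizing $\begin{bmatrix} 0 & -1\\ 1 & 0\end{bmatrix}$ (eigenvalues $\pm i$), namely $\mathcal{U}=\tfrac{1}{\sqrt 2}\begin{bmatrix} iI & -iI\\ I & I\end{bmatrix}$ acting on $\mathcal{H}\oplus\mathcal{H}$.

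First I would record the purely algebraic identity $\mathcal{U}^*\mathbb{T}\,\mathcal{U}=\begin{bmatrix} T_2+iT_1 & O\\ O & T_2-iT_1\end{bmatrix}$, obtained by a direct block multiplication; this step is routine. The crux is then to transfer this \emph{ordinary} unitary equivalence into an equality of $\mathbb{A}$-numerical radii. Here I would use that every entry of $\mathcal{U}$ is a scalar multiple of $I$, while $\mathbb{A}=\mathrm{diag}(A,A)$, so that $\mathcal{U}$ commutes with $\mathbb{A}$; in particular $\mathcal{U}^*\mathbb{A}=\mathbb{A}\mathcal{U}^*$ and $\|\mathcal{U}x\|_{\mathbb{A}}=\|x\|_{\mathbb{A}}$ for all $x$, so $\mathcal{U}$ is $\mathbb{A}$-unitary. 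Writing $w_{\mathbb{A}}(\mathbb{T})=\sup\{|\langle \mathbb{T}x,x\rangle_{\mathbb{A}}| : \|x\|_{\mathbb{A}}=1\}$ and substituting $x=\mathcal{U}y$ (a bijection preserving the $\mathbb{A}$-seminorm), the commutation relation turns $\langle \mathbb{T}\mathcal{U}y,\mathcal{U}y\rangle_{\mathbb{A}}$ into $\langle \mathcal{U}^*\mathbb{T}\mathcal{U}y,y\rangle_{\mathbb{A}}$, giving $w_{\mathbb{A}}(\mathbb{T})=w_{\mathbb{A}}(\mathcal{U}^*\mathbb{T}\mathcal{U})$. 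Equivalently, one invokes the stated invariance $w_{\mathbb{A}}(\mathcal{U}^{\#_{\mathbb{A}}}\mathbb{T}\mathcal{U})=w_{\mathbb{A}}(\mathbb{T})$ for $\mathbb{A}$-unitaries, noting that $\mathcal{U}^{\#_{\mathbb{A}}}=P_{\overline{\mathcal{R}(\mathbb{A})}}\,\mathcal{U}^*$ differs from $\mathcal{U}^*$ only by a left projection that is invisible to $w_{\mathbb{A}}(\cdot)$ since $\mathbb{A}P_{\overline{\mathcal{R}(\mathbb{A})}}=\mathbb{A}$.

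To finish, Lemma \ref{lem0001}(i) applied to the block-diagonal matrix yields $w_{\mathbb{A}}(\mathcal{U}^*\mathbb{T}\mathcal{U})=\max\{w_A(T_2+iT_1),\,w_A(T_2-iT_1)\}$. I would then rewrite this using $T_2+iT_1=i(T_1-iT_2)$ and $T_2-iT_1=-i(T_1+iT_2)$ together with the scalar homogeneity $w_A(\lambda T)=|\lambda|\,w_A(T)$; since $|i|=|-i|=1$, the maximum becomes $\max\{w_A(T_1+iT_2),\,w_A(T_1-iT_2)\}$, which is exactly the claimed formula.

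The main obstacle is the semi-Hilbertian bookkeeping in the middle step rather than the matrix arithmetic: because $A$ need not be injective, one has $\mathcal{U}^{\#_{\mathbb{A}}}\neq \mathcal{U}^*$ in general, so one must justify carefully that the ordinary-unitary diagonalization descends to $w_{\mathbb{A}}$. This is handled either by the explicit change of variables $x=\mathcal{U}y$ (leaning on $\mathcal{U}^*\mathbb{A}=\mathbb{A}\mathcal{U}^*$) or by the observation that left-composition with $P_{\overline{\mathcal{R}(\mathbb{A})}}$ leaves the $\mathbb{A}$-numerical radius unchanged; everything else is a direct computation.
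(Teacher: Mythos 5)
Your argument is correct, but note that the paper itself offers no proof of this lemma: it is quoted verbatim as Lemma 2.9 of the cited reference \cite{Nirmal2}, so there is nothing internal to compare against. Your route --- conjugating by the scalar unitary $\mathcal{U}=\tfrac{1}{\sqrt 2}\begin{bmatrix} iI & -iI\\ I & I\end{bmatrix}$ that diagonalizes the rotation block, checking that $\mathcal{U}$ commutes with $\mathbb{A}=\mathrm{diag}(A,A)$ so that the ordinary unitary equivalence descends to an $\mathbb{A}$-numerical radius equality, and then invoking Lemma \ref{lem0001}(i) together with $w_A(\lambda T)=|\lambda|w_A(T)$ --- is exactly the standard mechanism used in the cited source and throughout Section 2 of this paper (compare the $\mathbb{A}$-unitary conjugations in Theorem \ref{Thm1} and Remark \ref{Rem2.2}). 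The block computation $\mathcal{U}^*\mathbb{T}\mathcal{U}=\mathrm{diag}(T_2+iT_1,\,T_2-iT_1)$ checks out, the identities $T_2+iT_1=i(T_1-iT_2)$ and $T_2-iT_1=-i(T_1+iT_2)$ convert the maximum into the stated form, and your handling of the semi-Hilbertian subtlety (either via the change of variables $x=\mathcal{U}y$ using $\mathcal{U}^*\mathbb{A}=\mathbb{A}\mathcal{U}^*$, or via $\mathbb{A}P_{\overline{\mathcal{R}(\mathbb{A})}}=\mathbb{A}$) is sound; no gap.
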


	\begin{lemma}\label{lemma1}\textnormal{[Lemma 3.1, \cite{pinfek}]}
		Let $T_{ij}\in{\mathcal{B}}_{A}(\mathcal{H}), 1\leq i,j\leq n$. Then\\ $${\begin{bmatrix}
			T_{11} & T_{12}  & \cdots  &   T_{1n}\\
			T_{21}  & T_{22} & \cdots  & T_{2n}\\
			\vdots & \vdots &\ddots & \vdots  \\
			T_{n1}  & T_{n2}  &\cdots&   T_{nn}
			\end{bmatrix}}^{\#_{\mathbb{A}}} =\begin{bmatrix}
		T_{11}^{\#_A} & T_{21}^{\#_A}  & \cdots  &   T_{n1}^{\#_A}\\
		T_{12}^{\#_A}  & T_{22}^{\#_A} & \cdots  & T_{n2}^{\#_A}\\
		\vdots & \vdots &\ddots & \vdots  \\
		T_{1n}^{\#_A}  & T_{2n}^{\#_A}  &\cdots&   T_{nn}^{\#_A}
		\end{bmatrix}.$$
		
	\end{lemma}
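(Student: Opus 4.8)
The plan is to identify the claimed right-hand side as the reduced-solution $\mathbb{A}$-adjoint by checking that it satisfies the two properties that single out $\mathbb{T}^{\#_{\mathbb{A}}}$ and then appealing to uniqueness. Write $\mathbb{S}=[S_{ij}]$ for the candidate matrix, so that $S_{ij}=T_{ji}^{\#_A}$. Recall that $\mathbb{T}^{\#_{\mathbb{A}}}$ is the reduced solution of $\mathbb{A}X=\mathbb{T}^*\mathbb{A}$, i.e.\ the unique solution $X$ with $\mathcal{R}(X)\subseteq\overline{\mathcal{R}(\mathbb{A})}$; uniqueness is immediate because if two solutions agree under left multiplication by $\mathbb{A}$ their difference has range in $\mathcal{N}(\mathbb{A})=\overline{\mathcal{R}(\mathbb{A})}^{\perp}$ (here using that $\mathbb{A}$ is positive, hence self-adjoint), while also lying in $\overline{\mathcal{R}(\mathbb{A})}$, forcing it to vanish. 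Thus it suffices to show that $\mathbb{S}$ satisfies $\mathbb{A}\mathbb{S}=\mathbb{T}^*\mathbb{A}$ and $\mathcal{R}(\mathbb{S})\subseteq\overline{\mathcal{R}(\mathbb{A})}$.

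First I would verify the defining operator equation block-entrywise. Since $\mathbb{A}=\bigoplus_{i=1}^{n}A$ is block-diagonal, the $(i,j)$ block of $\mathbb{A}\mathbb{S}$ is simply $AS_{ij}=AT_{ji}^{\#_A}$. By the identity $AT^{\#_A}=T^*A$ recorded in the preliminaries, this equals $T_{ji}^*A$. On the other side, the conjugate transpose $\mathbb{T}^*$ has $(i,j)$ block $T_{ji}^*$, so (again using that $\mathbb{A}$ is block-diagonal) the $(i,j)$ block of $\mathbb{T}^*\mathbb{A}$ is also $T_{ji}^*A$. Hence $\mathbb{A}\mathbb{S}=\mathbb{T}^*\mathbb{A}$, which shows in particular that $\mathbb{S}$ is an $\mathbb{A}$-adjoint of $\mathbb{T}$ and that $\mathbb{T}$ admits an $\mathbb{A}$-adjoint.

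Next I would check the range condition. Because $\mathbb{A}$ is block-diagonal, $\overline{\mathcal{R}(\mathbb{A})}=\bigoplus_{i=1}^{n}\overline{\mathcal{R}(A)}$. Each block $S_{ij}=T_{ji}^{\#_A}$ satisfies $\mathcal{R}(T_{ji}^{\#_A})\subseteq\overline{\mathcal{R}(A)}$ by the stated property of the reduced solution, so every component of any vector in $\mathcal{R}(\mathbb{S})$ lies in $\overline{\mathcal{R}(A)}$; therefore $\mathcal{R}(\mathbb{S})\subseteq\bigoplus_{i=1}^{n}\overline{\mathcal{R}(A)}=\overline{\mathcal{R}(\mathbb{A})}$. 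Combining the two verified properties with the uniqueness of the reduced solution for $\mathbb{A}$ on $\mathbb{H}$ gives $\mathbb{T}^{\#_{\mathbb{A}}}=\mathbb{S}$, which is precisely the asserted formula.

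The only genuinely nontrivial point is that the reduced $\mathbb{A}$-adjoint is characterized by these two properties and is unique; this is just the general semi-Hilbert-space theory applied to the single positive operator $\mathbb{A}$ acting on $\mathbb{H}$, so nothing new is needed beyond transporting the scalar statements to block form. An equivalent, slightly more computational route would be to use $\mathbb{T}^{\#_{\mathbb{A}}}=\mathbb{A}^{\dagger}\mathbb{T}^*\mathbb{A}$ directly, after first observing that the Moore--Penrose inverse of the block-diagonal operator $\mathbb{A}$ is $\bigoplus_{i=1}^{n}A^{\dagger}$ (which one confirms against the four Moore--Penrose relations); multiplying the three block-diagonal/transpose factors then produces the $(i,j)$ block $A^{\dagger}T_{ji}^*A=T_{ji}^{\#_A}$ at once. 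I expect the main obstacle to be purely notational bookkeeping, namely keeping the index transposition straight so that $T_{ji}^{\#_A}$ (not $T_{ij}^{\#_A}$) lands in the $(i,j)$ slot throughout.
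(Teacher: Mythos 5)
Your argument is correct. Note, however, that the paper itself offers no proof of this lemma: it is imported verbatim as Lemma 3.1 of the cited work of Bhunia, Feki and Paul, so there is no internal proof to compare against. Your verification is a sound, self-contained derivation: the block computation $A T_{ji}^{\#_A}=T_{ji}^{*}A$ correctly matches the $(i,j)$ block of $\mathbb{T}^{*}\mathbb{A}$, the range condition $\mathcal{R}(T_{ji}^{\#_A})\subseteq\overline{\mathcal{R}(A)}$ passes to the direct sum, and your uniqueness argument for the reduced solution (difference of two solutions has range in $\mathcal{N}(\mathbb{A})=\overline{\mathcal{R}(\mathbb{A})}^{\perp}$ while also lying in $\overline{\mathcal{R}(\mathbb{A})}$) is exactly right, using that $\mathbb{A}$ is positive hence self-adjoint. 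The alternative route you sketch, computing $\mathbb{A}^{\dagger}\mathbb{T}^{*}\mathbb{A}$ blockwise after observing that the Moore--Penrose inverse of $\bigoplus_{i=1}^{n}A$ is $\bigoplus_{i=1}^{n}A^{\dagger}$, is the more standard one-line proof and is consistent with the identity $T^{\#_A}=A^{\dagger}T^{*}A$ recorded in the paper's preliminaries; either route is acceptable, and you have the index transposition (placing $T_{ji}^{\#_A}$ in the $(i,j)$ slot) correct throughout.
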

	\begin{lemma}\label{them3.7}\textnormal{[Theorem 3.5, \cite{NSD}]}
		Let $ T_i \in \mathcal{B}_A(\mathcal{H}),1\leq i\leq n$. Then 
		$$w_{\mathbb{A}}\left(\begin{bmatrix}
		T_1 &&\cdots&O\\
		O &T_2&&O\\
		\vdots &&\ddots &\vdots\\
		O&&\cdots&T_n
		\end{bmatrix}\right)=\max\{w_A(T_1),\dots, w_A(T_n)\}.$$
	\end{lemma}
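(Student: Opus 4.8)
The plan is to argue directly from the definition of the $\mathbb{A}$-numerical radius, exploiting the fact that a block diagonal matrix decouples the $\mathbb{A}$-sesquilinear form into a sum over its diagonal blocks. Writing $\mathbb{T}=\bigoplus_{i=1}^n T_i$ and a generic vector $x=[x_1,\dots,x_n]^T\in\mathbb{H}$, the first step is to record the two identities
\begin{align*}
\langle \mathbb{T}x,x\rangle_{\mathbb{A}}=\sum_{i=1}^n\langle T_ix_i,x_i\rangle_A \quad\text{and}\quad \|x\|_{\mathbb{A}}^2=\sum_{i=1}^n\|x_i\|_A^2,
\end{align*}
both of which follow immediately from $\mathbb{A}=\bigoplus_{i=1}^n A$ and the block action of $\mathbb{T}$ on $\mathbb{H}$.

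For the upper bound I would apply the triangle inequality to the sum and then normalize each block separately. When $\|x_i\|_A>0$, setting $u_i=x_i/\|x_i\|_A$ gives $|\langle T_ix_i,x_i\rangle_A|=\|x_i\|_A^2\,|\langle T_iu_i,u_i\rangle_A|\le \|x_i\|_A^2\,w_A(T_i)$ by the very definition of $w_A$; when $\|x_i\|_A=0$ the Cauchy--Schwarz inequality for the semi-inner product forces $\langle T_ix_i,x_i\rangle_A=0$, so the same bound holds trivially. Summing over $i$ and using $\sum_i\|x_i\|_A^2=1$ then yields
\begin{align*}
|\langle \mathbb{T}x,x\rangle_{\mathbb{A}}|\le \sum_{i=1}^n\|x_i\|_A^2\,w_A(T_i)\le \max_{1\le i\le n}w_A(T_i),
\end{align*}
and taking the supremum over all $\mathbb{A}$-unit vectors $x$ gives $w_{\mathbb{A}}(\mathbb{T})\le \max_i w_A(T_i)$.

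For the reverse inequality I would test the form on vectors concentrated in a single slot: fix $k$ and choose $x$ with $x_k=y$, $\|y\|_A=1$, and $x_j=0$ for $j\neq k$. Then $\|x\|_{\mathbb{A}}=1$ and $\langle \mathbb{T}x,x\rangle_{\mathbb{A}}=\langle T_ky,y\rangle_A$, so passing to the supremum over such $y$ shows $w_{\mathbb{A}}(\mathbb{T})\ge w_A(T_k)$; since $k$ is arbitrary, $w_{\mathbb{A}}(\mathbb{T})\ge \max_k w_A(T_k)$, and combining the two bounds closes the proof. An entirely equivalent route is induction on $n$ via Lemma~\ref{lem0001}(i), grouping $\mathbb{T}$ as the $2\times 2$ block diagonal operator $T_1\oplus\bigl(\bigoplus_{i=2}^n T_i\bigr)$; the only point needing attention there is that the weight governing the lower-right block is exactly $\bigoplus_{i=2}^n A$, so the induction hypothesis applies verbatim. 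In either approach the sole (minor) obstacle is the degeneracy of the semi-inner product, namely ensuring that blocks $x_i$ lying in $\mathcal{N}(A)$ contribute nothing to the form; this is precisely what the semi-inner product Cauchy--Schwarz inequality guarantees.
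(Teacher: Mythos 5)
Your direct argument is correct. Note first that the paper does not prove this statement at all: it is imported verbatim as Theorem 3.5 of the cited reference \cite{NSD}, so there is no in-paper proof to compare against. Your computation is sound on every point that matters: the decoupling identities $\langle \mathbb{T}x,x\rangle_{\mathbb{A}}=\sum_i\langle T_ix_i,x_i\rangle_A$ and $\|x\|_{\mathbb{A}}^2=\sum_i\|x_i\|_A^2$ follow from $\mathbb{A}=\bigoplus_i A$; the convex-combination bound $\sum_i\|x_i\|_A^2\,w_A(T_i)\le\max_i w_A(T_i)$ gives the upper estimate; the handling of blocks with $\|x_i\|_A=0$ via the semi-inner-product Cauchy--Schwarz inequality is exactly the right way to deal with the degeneracy of $\langle\cdot,\cdot\rangle_A$; and testing on vectors supported in a single slot gives the reverse inequality. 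The only caveat concerns your proposed alternative by induction through Lemma~\ref{lem0001}(i): as stated in the paper that lemma takes both diagonal blocks in $\mathcal{B}_A(\mathcal{H})$ over the \emph{same} weighted space, whereas the inductive step pairs $T_1$ on $(\mathcal{H},A)$ with $\bigoplus_{i=2}^n T_i$ on $\bigl(\bigoplus_{i=2}^n\mathcal{H},\bigoplus_{i=2}^n A\bigr)$, so you would need the (true, but not literally quoted) two-space version of that lemma. Your first, direct route avoids this entirely and is the cleaner proof.
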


\begin{lemma}\textnormal{(Theorem 3.4, \cite{NSD})}\label{thm3.4}
   Let $ A_i \in \mathcal{B}_A(\mathcal{H}),~ i= 1, 2,\ldots, n $ and $\mathbb{T}=\begin{bmatrix}
 O & \cdots  & O  & A_1 \\
 \vdots  &   & A_2 & O \\
 O & \adots &  & \vdots \\
 A_n  & O & \cdots &  O
 \end{bmatrix}.$ 
 If $n$ is even, then
 $$w_{\mathbb{A}}(\mathbb{T})\leq \frac{1}{2} \displaystyle\sum_{i=1}^{n} \|A_i \|_A,$$
 and if $n$ is odd, then
 $$ w_{\mathbb{A}}(\mathbb{T})\leq  w_A\bigg(A_{\frac{n+1}{2}}\bigg)+\frac{1}{2} \displaystyle\sum_{\substack{i=1\\{i\neq{\frac{n+1}{2}} }}}^{n} \|A_i\|_A.$$ 
\end{lemma}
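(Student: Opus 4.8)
The plan is to realize $\mathbb{T}$ as a sum of $n$ elementary operator matrices and to estimate each summand separately, exploiting that $w_{\mathbb{A}}(\cdot)$ is subadditive (being a seminorm). For $1\leq i\leq n$, let $E_i$ denote the operator matrix whose only nonzero block is $A_i$, placed in the $(i,n+1-i)$ slot, so that $\mathbb{T}=\sum_{i=1}^{n}E_i$. Subadditivity then gives
\begin{align*}
w_{\mathbb{A}}(\mathbb{T})\leq \sum_{i=1}^{n}w_{\mathbb{A}}(E_i),
\end{align*}
and the whole argument reduces to computing $w_{\mathbb{A}}(E_i)$ for a single-block matrix, then summing according to the parity of $n$.

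First I would treat an \emph{off-diagonal} slot, i.e. an index $i$ with $i\neq n+1-i$. Since the unique nonzero block of $E_i$ lies strictly off the main diagonal, a direct inspection of the product $E_i^2$ (its $(p,q)$ block forces $n+1-i=i$) shows $E_i^2=O$, so \eqref{eq1.5} yields $w_{\mathbb{A}}(E_i)=\tfrac{1}{2}\|E_i\|_{\mathbb{A}}$. To evaluate $\|E_i\|_{\mathbb{A}}$ I would use Lemma \ref{lemma1}: the reduced $\mathbb{A}$-adjoint $E_i^{\#_{\mathbb{A}}}$ carries $A_i^{\#_A}$ in the $(n+1-i,i)$ slot, whence $E_i^{\#_{\mathbb{A}}}E_i$ is a block-diagonal matrix whose only nonzero diagonal block is $A_i^{\#_A}A_i$. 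Combining the identity $\|E_i\|_{\mathbb{A}}^2=\|E_i^{\#_{\mathbb{A}}}E_i\|_{\mathbb{A}}$ from \eqref{ineq0} with the fact that the $\mathbb{A}$-norm of a block-diagonal matrix is the maximum of the $A$-norms of its blocks (a one-line verification, since $\|x\|_{\mathbb{A}}^2=\sum_k\|x_k\|_A^2$), and then applying \eqref{ineq0} once more to $A_i$ itself, gives $\|E_i\|_{\mathbb{A}}=\|A_i\|_A$. Thus $w_{\mathbb{A}}(E_i)=\tfrac{1}{2}\|A_i\|_A$ for every off-diagonal index.

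When $n$ is odd there is exactly one \emph{central} slot, namely $i=\tfrac{n+1}{2}$, for which $n+1-i=i$; here the block $A_{\frac{n+1}{2}}$ sits on the main diagonal, so $E_{\frac{n+1}{2}}$ is block diagonal and Lemma \ref{them3.7} gives $w_{\mathbb{A}}\big(E_{\frac{n+1}{2}}\big)=w_A\big(A_{\frac{n+1}{2}}\big)$. Substituting these per-block values into the subadditivity estimate, the even case (no central slot) collects to $w_{\mathbb{A}}(\mathbb{T})\leq\tfrac{1}{2}\sum_{i=1}^{n}\|A_i\|_A$, while the odd case isolates the central term to give $w_{\mathbb{A}}(\mathbb{T})\leq w_A\big(A_{\frac{n+1}{2}}\big)+\tfrac{1}{2}\sum_{\substack{i=1\\ i\neq (n+1)/2}}^{n}\|A_i\|_A$, as claimed. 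I expect the only genuinely delicate point to be the norm computation $\|E_i\|_{\mathbb{A}}=\|A_i\|_A$: the existence and the precise form of $E_i^{\#_{\mathbb{A}}}$ rest on Lemma \ref{lemma1}, and one must check that passing to $\#_{\mathbb{A}}$ and reading off the block-diagonal norm are legitimate for operators in $\mathcal{B}_A(\mathcal{H})$; everything else is bookkeeping across the even/odd split.
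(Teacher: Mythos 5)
Your argument is correct, and it is worth noting that the paper itself offers no proof of this statement: Lemma \ref{thm3.4} is imported verbatim from Theorem 3.4 of \cite{NSD}, so there is no in-paper proof to compare against. What you have written is a complete, self-contained derivation using only tools already recorded in Section \ref{intro}: the decomposition $\mathbb{T}=\sum_i E_i$ with subadditivity of $w_{\mathbb{A}}$, the nilpotency observation $E_i^2=O$ for off-antidiagonal-center slots combined with \eqref{eq1.5}, the identification $\|E_i\|_{\mathbb{A}}=\|A_i\|_A$ via Lemma \ref{lemma1} and \eqref{ineq0}, and Lemma \ref{them3.7} for the central block in the odd case. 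All of these steps check out; in particular $E_i^2=O$ does hold precisely when $i\neq n+1-i$, the block-diagonal norm formula $\|\bigoplus_k S_k\|_{\mathbb{A}}=\max_k\|S_k\|_A$ is legitimate because $\|x\|_{\mathbb{A}}^2=\sum_k\|x_k\|_A^2$ and $\overline{\mathcal{R}(\mathbb{A})}=\bigoplus_k\overline{\mathcal{R}(A)}$, and the parity bookkeeping reproduces exactly the two stated bounds. The source \cite{NSD} obtains the same estimate by grouping the antidiagonal into $2\times 2$ cross pairs $\left[\begin{smallmatrix} O & A_i\\ A_{n+1-i} & O\end{smallmatrix}\right]$ and invoking the bound $w_{\mathbb{A}}\leq\frac{1}{2}(\|A_i\|_A+\|A_{n+1-i}\|_A)$ for each pair; your version simply flattens that pairing into $n$ single-block summands, which costs nothing since the pairwise bound is itself proved by exactly this splitting. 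The one place where a reader might want a line more of detail is the claim $\|E_i\|_{\mathbb{A}}=\|A_i\|_A$, which you correctly flag as the delicate point; a shorter route there is the direct computation $\|E_ix\|_{\mathbb{A}}=\|A_ix_{n+1-i}\|_A\leq\|A_i\|_A\|x\|_{\mathbb{A}}$ with near-equality on vectors supported in the $(n+1-i)$-th coordinate of $\overline{\mathcal{R}(\mathbb{A})}$, which avoids passing through $E_i^{\#_{\mathbb{A}}}E_i$ altogether.
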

\begin{lemma}\label{l87}\textnormal{(Proposition 1, \cite{Feki01})}\label{LEMA1.87}
Let $ S \in \mathcal{B}_A(\mathcal{H})$. Then $\rho_A(S^k)=(\rho_A(S))^k$ for all $k=1, 2, \dots$.
\end{lemma}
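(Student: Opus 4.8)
The plan is to work directly from the limit formula for the $A$-spectral radius, $\rho_A(S)=\lim_{n\to\infty}\|S^n\|_A^{1/n}$, whose existence is already guaranteed by Feki's definition recalled earlier in the excerpt. First I would observe that since $\mathcal{B}_A(\mathcal{H})$ is a subalgebra of $\mathcal{B}(\mathcal{H})$, the power $S^k$ again belongs to $\mathcal{B}_A(\mathcal{H})$, so $\rho_A(S^k)$ is well defined; moreover, as operators one has $(S^k)^m=S^{km}$, and hence $\|(S^k)^m\|_A=\|S^{km}\|_A$ for every $m\ge 1$. The case $k=1$ is trivial, so I fix $k\ge 2$.

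Next I would rewrite the $A$-spectral radius of $S^k$ using these identities together with the limit formula:
\begin{align*}
\rho_A(S^k)=\lim_{m\to\infty}\|(S^k)^m\|_A^{1/m}=\lim_{m\to\infty}\|S^{km}\|_A^{1/m}=\lim_{m\to\infty}\left(\|S^{km}\|_A^{1/(km)}\right)^k.
\end{align*}
The sequence $\big(\|S^{km}\|_A^{1/(km)}\big)_{m\ge 1}$ is precisely the subsequence of $\big(\|S^{n}\|_A^{1/n}\big)_{n\ge 1}$ indexed by $n=km\to\infty$. Since the full sequence converges to $\rho_A(S)$, every subsequence converges to the same limit; combining this with the continuity of $t\mapsto t^k$ on $[0,\infty)$ yields $\rho_A(S^k)=(\rho_A(S))^k$, as claimed.

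The argument is short because the genuinely nontrivial input, namely the existence of $\lim_{n\to\infty}\|S^n\|_A^{1/n}$ (equivalently, that the infimum and limit agree), which ultimately rests on the submultiplicativity $\|TS\|_A\le\|T\|_A\|S\|_A$ via Fekete's subadditive lemma applied to $\log\|S^n\|_A$, is already packaged into the definition of $\rho_A$; finiteness of each $\|S^n\|_A$ follows from $\|S^n\|_A\le\|S\|_A^n$. The only point demanding genuine care is the index bookkeeping in the exponent, that is, rewriting $\|S^{km}\|_A^{1/m}$ as $\big(\|S^{km}\|_A^{1/(km)}\big)^k$ and recognizing $\{km\}_{m\ge 1}$ as a subsequence of $\mathbb{N}$. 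I expect this to be the main (indeed essentially the only) obstacle, together with verifying the boundary behaviour when $\rho_A(S)=0$, where the same subsequence limit gives $0=0^k$.
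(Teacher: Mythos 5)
Your argument is correct: the identity $\|(S^k)^m\|_A=\|S^{km}\|_A$, the rewriting $\|S^{km}\|_A^{1/m}=\bigl(\|S^{km}\|_A^{1/(km)}\bigr)^k$, and the fact that $\{km\}_{m\ge1}$ indexes a subsequence of the convergent sequence $\|S^n\|_A^{1/n}$ together give exactly what is claimed, and the closure of $\mathcal{B}_A(\mathcal{H})$ under products guarantees $S^k$ is admissible. Note, however, that the paper offers no proof to compare against --- it imports this statement verbatim as Proposition 1 of the cited work of Feki --- and your subsequence argument is the standard (and essentially the only natural) route from the Gelfand-type limit formula, so there is nothing further to reconcile.
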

\begin{lemma}\label{l17}\textnormal{(Theorem 2.3, \cite{Feki})}\label{LEMA1.7}
    If $S=[S_{ij}]$ is an $n\times n$ operator matrix with $S_{ij} \in \mathcal{B}_A(\mathcal{H})$, then 
\begin{align}\label{p4106}
w_{\mathbb{A}}(S) \leq w([s_{ij}]), 
\end{align}
where 
\begin{align*}
s_{ij} =\left\{ \begin{array}{lll}
w_A(S_{ij}) & \textnormal{for $ i=j$}\\
w_{\mathbb{A}}\left(\begin{bmatrix}
O & S_{ij}\\
S_{ji} & O
\end{bmatrix}\right)& \textnormal{for $ i \neq j $}. 
\end{array} \right.
\end{align*}
\end{lemma}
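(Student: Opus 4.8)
\emph{Proof proposal.} The plan is to bound the $\mathbb{A}$-Rayleigh quotient of $S$ pointwise by a scalar quadratic form in the ``block norms'' $\|x_i\|_A$, and then to recognize that form as being controlled by $w([s_{ij}])$. First I would fix $x=[x_1,\dots,x_n]^T\in\mathbb{H}$ with $\|x\|_{\mathbb{A}}=1$, i.e.\ $\sum_{i=1}^n\|x_i\|_A^2=1$, and put $\xi_i=\|x_i\|_A\ge 0$, so that $\xi=(\xi_1,\dots,\xi_n)$ is a unit vector in $\mathbb{C}^n$ with nonnegative entries. Expanding the semi-inner product gives
\[
\langle Sx,x\rangle_{\mathbb{A}}=\sum_{i,j=1}^n\langle S_{ij}x_j,x_i\rangle_A=\sum_{i=1}^n\langle S_{ii}x_i,x_i\rangle_A+\sum_{1\le i<j\le n}\big(\langle S_{ij}x_j,x_i\rangle_A+\langle S_{ji}x_i,x_j\rangle_A\big),
\]
so after the triangle inequality the task reduces to estimating each diagonal term and each off-diagonal pair.

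The diagonal terms are immediate: $|\langle S_{ii}x_i,x_i\rangle_A|\le w_A(S_{ii})\,\|x_i\|_A^2=s_{ii}\,\xi_i^2$. The heart of the argument is the off-diagonal estimate
\[
\big|\langle S_{ij}x_j,x_i\rangle_A+\langle S_{ji}x_i,x_j\rangle_A\big|\le 2\,s_{ij}\,\xi_i\xi_j .
\]
To prove it I would first assume $\xi_i,\xi_j>0$, set the $A$-unit vectors $u=x_i/\xi_i$ and $v=x_j/\xi_j$, and test the $2\times2$ matrix $B=\begin{bmatrix}O&S_{ij}\\ S_{ji}&O\end{bmatrix}$, whose $\mathbb{A}$-numerical radius is exactly $s_{ij}$, against the \emph{balanced} vector $w=(u/\sqrt2,\,v/\sqrt2)$, which satisfies $\|w\|_{\mathbb{A}}^2=\tfrac12+\tfrac12=1$. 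Since
\[
\langle Bw,w\rangle_{\mathbb{A}}=\tfrac12\big(\langle S_{ij}v,u\rangle_A+\langle S_{ji}u,v\rangle_A\big),
\]
the definition of $s_{ij}$ forces $|\langle S_{ij}v,u\rangle_A+\langle S_{ji}u,v\rangle_A|\le 2s_{ij}$, and rescaling by $\xi_i\xi_j$ gives the claim. If $\xi_i=0$, so that $x_i\in\mathcal N(A)$, the Cauchy--Schwarz inequality for the seminorm forces $\langle S_{ij}x_j,x_i\rangle_A=\langle S_{ji}x_i,x_j\rangle_A=0$, matching the vanishing right-hand side, so the degenerate terms are harmless.

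Combining the two estimates and using the symmetry $s_{ij}=s_{ji}$, which follows from Lemma~\ref{lem0001}(ii), I would obtain
\[
|\langle Sx,x\rangle_{\mathbb{A}}|\le \sum_{i=1}^n s_{ii}\xi_i^2+\sum_{i<j}2s_{ij}\xi_i\xi_j=\sum_{i,j=1}^n s_{ij}\xi_i\xi_j=\big\langle [s_{ij}]\,\xi,\xi\big\rangle .
\]
As $[s_{ij}]$ is a real symmetric (indeed entrywise nonnegative) matrix and $\xi$ is a unit vector in $\mathbb{C}^n$, the right-hand side is at most $w([s_{ij}])$; taking the supremum over all $\mathbb{A}$-unit $x$ then yields $w_{\mathbb{A}}(S)\le w([s_{ij}])$. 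I expect the only genuine obstacle to be the off-diagonal step: one must extract precisely the factor $2s_{ij}$ rather than a lossy bound such as $s_{ij}(\xi_i^2+\xi_j^2)$, and the balanced test vector $(u/\sqrt2,v/\sqrt2)$ is exactly what reconciles the two normalizations—the separate $A$-unit vectors $u,v$ on one side and the joint $\mathbb{A}$-unit constraint defining $s_{ij}$ on the other. The seminorm degeneracy and the final passage to $w([s_{ij}])$ are routine.
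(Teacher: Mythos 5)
Your argument is correct and complete: the decomposition of $\langle Sx,x\rangle_{\mathbb{A}}$ into diagonal terms and off-diagonal pairs, the balanced test vector $(u/\sqrt{2},\,v/\sqrt{2})$ to extract the factor $2s_{ij}\xi_i\xi_j$, the seminorm-degeneracy case via Cauchy--Schwarz, and the final passage $\langle [s_{ij}]\xi,\xi\rangle\le w([s_{ij}])$ are all sound. Note, however, that the paper itself gives no proof of this lemma --- it is quoted as Theorem 2.3 of \cite{Feki} --- and your proof is essentially the standard argument for this pinching-type bound (the $A=I$ case goes back to Hou--Du), so there is nothing in the paper to contrast it with.
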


	\section{$\mathbb{A}$-numerical radii of $n\times n$ cross-diagonal operator matrices}
	The aim of this section is to discuss certain $\mathbb{A}$-numerical radius equalities for $n\times n$ cross-diagonal operator matrices. The very first result is a formula for the  $\mathbb{A}$-numerical radius of a cross-diagonal operator matrix which is inspired by \cite{AKR}.

	\begin{theorem}\label{Thm1}
		Let $T_i\in \mathcal{B}_A(\mathcal{H}_i, \mathcal{H}_i)$ and  $S_i\in \mathcal{B}_A(\mathcal{H}_{n+1-i}, \mathcal{H}_i)$  for $1\leq i\leq n$ and \\let $\mathbb{T}=\begin{bmatrix}
		T_{1} & O &  & \cdots&  & O&  S_{1}\\
		O  & T_{2} & & & & S_{2}  & O\\
		\vdots  &  &\ddots &&\adots  & & \vdots\\
  	  &  &\adots& &\ddots  & &  \\
		O &S_{n-1} &  & && T_{n-1} & O \\
		S_{n}  & O  & &\cdots&  &O & T_{n}
		\end{bmatrix}$ be a cross-diagonal operator matrix. We assume that $T_{\frac{n+1}{2}}=S_{\frac{n+1}{2}}$, when $n$ is an odd number.  
  \begin{enumerate}
     \item [(i)] If $n$ is even, then
          \begin{align}\label{}
	w_\mathbb{A}(\mathbb{T})=	\max_{1\leq i\leq \frac{n}{2}}\left\{w_\mathbb{A}\left(\begin{bmatrix}
		T_{n-(2i-1)} & S_{n-(2i-1)}\\
            S_{2i} & T_{2i}
		\end{bmatrix}\right)\right\}.
      \end{align}
        \item [(ii)] If $n$ and $\frac{n+1}{2}$ are odd, then
      {  \scriptsize     \begin{align}\label{}
	w_\mathbb{A}(\mathbb{T})=\max\left\{	\max_{\substack{1\leq i\leq \frac{n-1}{4}}}\left\{w_\mathbb{A}\left(\begin{bmatrix}
		T_{n-(2i-1)} & S_{n-(2i-1)}\\
		S_{2i} & T_{2i}
		\end{bmatrix}\right)\right\}, w_{A}(T_{\frac{n+1}{2}}),\max_{\substack{\frac{n+7}{4}\leq i\leq \frac{n+1}{2}}}\left\{w_\mathbb{A}\left(\begin{bmatrix}
		T_{n-(2i-2)} & S_{n-(2i-2)}\\
		S_{2i-1} & T_{2i-1}
		\end{bmatrix}\right) \right\}\right\}.
      \end{align}}
      \item [(iii)] If $n$ is odd but $\frac{n+1}{2}$ is even, then
      {  \scriptsize     \begin{align}\label{}
	w_\mathbb{A}(\mathbb{T})=\max\left\{	\max_{\substack{1\leq i\leq \frac{n-3}{4}}}\left\{w_\mathbb{A}\left(\begin{bmatrix}
		T_{n-(2i-1)} & S_{n-(2i-1)}\\
		S_{2i} & T_{2i}
		\end{bmatrix}\right)\right\}, w_{A}(T_{\frac{n+1}{2}}),\max_{\substack{\frac{n+5}{4}\leq i\leq \frac{n+1}{2}}}\left\{w_\mathbb{A}\left(\begin{bmatrix}
		T_{n-(2i-2)} & S_{n-(2i-2)}\\
		S_{2i-1} & T_{2i-1}
		\end{bmatrix}\right) \right\}\right\}.
      \end{align}}
  \end{enumerate}
	\end{theorem}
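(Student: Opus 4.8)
The plan is to exploit the fact that a cross-diagonal matrix only couples the pair of indices $\{i,n+1-i\}$: the $i$-th row has nonzero entries $T_i$ in column $i$ and $S_i$ in column $n+1-i$, while the $(n+1-i)$-th row has $T_{n+1-i}$ in column $n+1-i$ and $S_{n+1-i}$ in column $i$. Hence, if one regroups the $n$ copies of $\mathcal{H}$ according to the pairing $i\leftrightarrow n+1-i$, the operator $\mathbb{T}$ becomes block diagonal, with a $2\times 2$ block $\begin{bmatrix} T_p & S_p \\ S_q & T_q\end{bmatrix}$ (where $p+q=n+1$, $p\neq q$) for each such pair, together with a single $1\times 1$ block $T_{(n+1)/2}$ at the fixed point when $n$ is odd. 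Here the hypothesis $T_{(n+1)/2}=S_{(n+1)/2}$ guarantees that the central entry is well defined.

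First I would make the regrouping precise. Let $P$ be the permutation operator on $\mathbb{H}=\bigoplus_{i=1}^{n}\mathcal{H}$ that reorders the summands so that paired indices become adjacent. Since $\mathbb{A}$ has all diagonal blocks equal to $A$, we have $\mathbb{A}P=P\mathbb{A}$, and because $P$ is an ordinary unitary this gives $\|Px\|_{\mathbb{A}}=\|P^{\#_{\mathbb{A}}}x\|_{\mathbb{A}}=\|x\|_{\mathbb{A}}$; thus $P$ is $\mathbb{A}$-unitary. Consequently $w_{\mathbb{A}}(\mathbb{T})=w_{\mathbb{A}}(P^{\#_{\mathbb{A}}}\mathbb{T}P)$, and $P^{\#_{\mathbb{A}}}\mathbb{T}P$ is exactly the block-diagonal operator described above. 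Applying the direct-sum formula for the $\mathbb{A}$-numerical radius, namely the obvious extension of Lemma \ref{them3.7} to blocks that are themselves $2\times 2$ operator matrices, yields
$$w_{\mathbb{A}}(\mathbb{T})=\max_{\substack{p+q=n+1\\ p\le q}} w_{\mathbb{A}}\!\left(\begin{bmatrix} T_p & S_p \\ S_q & T_q\end{bmatrix}\right),$$
where a pair with $p=q$ contributes the scalar term $w_A(T_{(n+1)/2})$.

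The remaining work is purely a matter of indexing the pairs $\{p,q\}$ with $p+q=n+1$ to match the statement. For $n$ even, writing $q=2i$ and $p=n-(2i-1)$ as $i$ runs from $1$ to $n/2$ enumerates every pair exactly once, which is precisely (i). For $n$ odd, the index $(n+1)/2$ is the fixed point and is split off as the central term $w_A(T_{(n+1)/2})$; the remaining pairs all consist of two indices of equal parity (since $p+q=n+1$ is even), so they fall into two families, namely pairs with both indices even and pairs with both indices odd. I would parametrize the pairs with both indices even by $q=2i,\ p=n-(2i-1)$ and those with both indices odd by $q=2i-1,\ p=n-(2i-2)$, and then read off the exact ranges of $i$. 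Whether the central index $(n+1)/2$ is even or odd shifts these ranges, and this is exactly what distinguishes case (ii), where $(n+1)/2$ is odd and the ranges are $1\le i\le\frac{n-1}{4}$ and $\frac{n+7}{4}\le i\le\frac{n+1}{2}$, from case (iii), where $(n+1)/2$ is even.

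I expect the only real obstacle to be this last bookkeeping step: one must verify a short congruence computation on $n\bmod 4$ to confirm that the two stated ranges of $i$ partition the non-central pairs with neither omission nor repetition. For instance, in case (ii) one checks that each family contains $\frac{n-1}{4}$ pairs, for a total of $\frac{n-1}{2}$, thereby accounting for all pairs besides the fixed point. The conceptual core, namely the reduction to block-diagonal form via an $\mathbb{A}$-unitary permutation together with the direct-sum formula, is routine; everything delicate is contained in matching the indices.
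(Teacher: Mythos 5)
Your proposal is correct and follows essentially the same route as the paper: the authors likewise construct an $\mathbb{A}$-unitary permutation operator matrix (built from a pairing permutation $\pi$ with $\pi(i)\in\{i,n-i\}$), conjugate $\mathbb{T}$ by it to obtain a direct sum of $2\times 2$ blocks (plus the central $1\times 1$ block when $n$ is odd), and then invoke the direct-sum maximum formula of Lemma \ref{them3.7}. The only cosmetic difference is that the paper conjugates $\mathbb{T}^{\#_{\mathbb{A}}}$ rather than $\mathbb{T}$ so that the projections $P_{\overline{\mathcal{R}(A)}}$ arising from $\mathbb{U}^{\#_{\mathbb{A}}}$ are absorbed via $\mathcal{R}(T_i^{\#_A})\subseteq\overline{\mathcal{R}(A)}$, a technicality your argument glosses over but which does not affect the conclusion.
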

 \begin{proof}
  Let us recall the following permutation $\pi$ for the set $\{1,2,\dots,n\}$ from \cite{AKR}. 

  If $n$ is even, then \begin{gather} 
		\pi(i)=
		\begin{cases}
		i; \text{ if $i$ even}\\
		n-i;\text{ if $i$ odd.}
		\end{cases}
		\end{gather}

   If $n$ is odd, then \begin{gather} 
		\pi(i)=
		\begin{cases}
		i; \text{ if $i$ even and $i<\frac{n+1}{2}$}\\
		n-i;\text{ if $i$ odd and $i<\frac{n+1}{2}$}\\
            n-i;\text{ if $i$ even and $i\geq \frac{n+1}{2}$}\\
            i; \text{ if $i$ odd and $i\geq\frac{n+1}{2}$}.
		\end{cases}
		\end{gather}
  We consider the matrix $\mathbb U=(U_{ij})$, where \begin{gather} 
		U_{ij}=
		\begin{cases}
		I; \text{ if } \pi(i)=j\\
		0;\text{ if }\pi(i)\neq j.
		\end{cases}
		\end{gather}
  Clearly, $\mathbb U^*=(U_{ji})$.

One can check that $\mathbb{U}^{\#_{\mathbb{A}}}=\mathbb A^{\dagger}\mathbb U^*\mathbb A=(U^{\#_{\mathbb{A}}}_{ij})$, where \begin{gather} 
		U^{\#_{\mathbb{A}}}_{ij}=
		\begin{cases}
		P_{\overline{\mathcal{R}(A)}}; \text{ if } \pi(j)=i\\
		0;\text{ if }\pi(j)\neq i.
		\end{cases}
		\end{gather}
  Now, $(\mathbb U \mathbb U^{\#_{\mathbb{A}}})_{ij}=\sum_{k=1}^{n}U_{ik}U^{\#_{\mathbb{A}}}_{kj}$. Clearly, if $i=j$, then $(\mathbb U \mathbb U^{\#_{\mathbb{A}}})_{ij}=U_{i\pi(i)}U^{\#_{\mathbb{A}}}_{\pi(i)i}=P_{\overline{\mathcal{R}(A)}}$, and if $i\neq j$, then $(\mathbb U \mathbb U^{\#_{\mathbb{A}}})_{ij}=U_{i\pi(i)}U^{\#_{\mathbb{A}}}_{\pi(i)j}+U_{i\pi(j)}U^{\#_{\mathbb{A}}}_{\pi(j)j}=U^{\#_{\mathbb{A}}}_{\pi(i)j}+U_{i\pi(j)}P_{\overline{\mathcal{R}(A)}}=0$. Therefore, \begin{gather} 
		(\mathbb U \mathbb U^{\#_{\mathbb{A}}})_{ij}=
		\begin{cases}
		P_{\overline{\mathcal{R}(A)}}; \text{ if } i=j\\
		0;\text{ if } i\neq j.
		\end{cases}
		\end{gather}
  Similarly, one can check that \begin{gather} 
		( \mathbb U^{\#_{\mathbb{A}}}\mathbb U)_{ij}=
		\begin{cases}
		P_{\overline{\mathcal{R}(A)}}; \text{ if } i=j\\
		0;\text{ if } i\neq j.
		\end{cases}
		\end{gather}
Thus, $\mathbb U \mathbb U^{\#_{\mathbb{A}}}=\mathbb U^{\#_{\mathbb{A}}}\mathbb U=\begin{bmatrix}
		P_{\overline{\mathcal{R}(A)}} & O  & \cdots  &   O\\
		O  & P_{\overline{\mathcal{R}(A)}} & \cdots  & O\\
		\vdots & \vdots &\vdots & \vdots \\
		O  & O  &\cdots&  P_{\overline{\mathcal{R}(A)}}
		\end{bmatrix}$. Hence, $\mathbb{U}$ is an $\mathbb{A}$-unitary operator. 

  Now, using Lemma \ref{lemma1} when $n$ is even, we have
  \begin{align*}
      \mathbb{U}^{\#_{\mathbb{A}}} \mathbb{T}^{\#_{\mathbb{A}}}  \mathbb{U} 
		&= \displaystyle\bigoplus_{i=1}^{\frac{n}{2}}\begin{bmatrix}
		P_{\overline{\mathcal{R}(A)}}T_{n-(2i-1)}^{\#_{A}} &P_{\overline{\mathcal{R}(A)}} S_{2i}^{\#_{A}}\\
		P_{\overline{\mathcal{R}(A)}}S_{n-(2i-1)}^{\#_{A}} & P_{\overline{\mathcal{R}(A)}}T_{2i}^{\#_{A}}
		\end{bmatrix}\\
  &= \displaystyle\bigoplus_{i=1}^{\frac{n}{2}}\begin{bmatrix}
		T_{n-(2i-1)}^{\#_{A}} &S_{2i}^{\#_{A}}\\
		S_{n-(2i-1)}^{\#_{A}} & T_{2i}^{\#_{A}}
		\end{bmatrix} ~~~~\textnormal{as}~  \mathcal{R}(T_i^{\#_A})\subseteq \overline{\mathcal{R}(A)}\\
  &= \left(\displaystyle\bigoplus_{i=1}^{\frac{n}{2}}\begin{bmatrix}
		T_{n-(2i-1)} &S_{n-(2i-1)}\\
	 S_{2i}	 & T_{2i}
		\end{bmatrix}\right)^{\#_{\mathbb A}}.
  \end{align*}

Similarly, if $n$ and $\frac{n+1}{2}$ are odd, we can show that 
 \begin{align*}
      \mathbb{U}^{\#_{\mathbb{A}}} \mathbb{T}^{\#_{\mathbb{A}}}  \mathbb{U} 
		&= \displaystyle\left\{\bigoplus_{i=1}^{\frac{n-1}{4}}\left(\begin{bmatrix}
		T_{n-(2i-1)}^{\#_{\mathbb{A}}} &S_{2i}^{\#_{\mathbb{A}}}\\
	 S_{n-(2i-1)}^{\#_{\mathbb{A}}}	 & T_{2i}^{\#_{\mathbb{A}}}
		\end{bmatrix}\right)\right\}\bigoplus T_{\frac{n+1}{2}}^{\#_{\mathbb{A}}} \bigoplus \left\{\bigoplus_{i=\frac{n+7}{4}}^{\frac{n+1}{2}}\left(\begin{bmatrix}
		T_{n-(2i-2)}^{\#_{\mathbb{A}}} &S_{2i-1}^{\#_{\mathbb{A}}}\\
	 S_{n-(2i-2)}^{\#_{\mathbb{A}}}	 & T_{2i-1}^{\#_{\mathbb{A}}}
		\end{bmatrix}\right)\right\}\\
  &= \displaystyle\left\{\bigoplus_{i=1}^{\frac{n-1}{4}}\left(\begin{bmatrix}
		T_{n-(2i-1)} &S_{n-(2i-1)}\\
	 S_{2i}	 & T_{2i}
		\end{bmatrix}\right)^{\#_{\mathbb A}}\right\}\bigoplus T^{\#_{\mathbb A}}_{\frac{n+1}{2}} \bigoplus \left\{\bigoplus_{i=\frac{n+7}{4}}^{\frac{n+1}{2}}\left(\begin{bmatrix}
		T_{n-(2i-2)} &S_{n-(2i-2)}\\
	 S_{2i-1}	 & T_{2i-1}
		\end{bmatrix}\right)^{\#_{\mathbb A}}\right\},
  \end{align*}
and if $n$ is odd but $\frac{n+1}{2}$ is even, then \begin{align*}
      \mathbb{U}^{\#_{\mathbb{A}}} \mathbb{T}^{\#_{\mathbb{A}}}  \mathbb{U} 
		&= \displaystyle\left\{\bigoplus_{i=1}^{\frac{n-3}{4}}\left(\begin{bmatrix}
		T_{n-(2i-1)}^{\#_{\mathbb{A}}} &S_{2i}^{\#_{\mathbb{A}}}\\
	 S_{n-(2i-1)}^{\#_{\mathbb{A}}}	 & T_{2i}^{\#_{\mathbb{A}}}
		\end{bmatrix}\right)\right\}\bigoplus T_{\frac{n+1}{2}}^{\#_{\mathbb{A}}} \bigoplus \left\{\bigoplus_{i=\frac{n+5}{4}}^{\frac{n+1}{2}}\left(\begin{bmatrix}
		T_{n-(2i-2)}^{\#_{\mathbb{A}}} &S_{2i-1}^{\#_{\mathbb{A}}}\\
	 S_{n-(2i-2)}^{\#_{\mathbb{A}}}	 & T_{2i-1}^{\#_{\mathbb{A}}}
		\end{bmatrix}\right)\right\}\\
  &= \displaystyle\left\{\bigoplus_{i=1}^{\frac{n-3}{4}}\left(\begin{bmatrix}
		T_{n-(2i-1)} &S_{n-(2i-1)}\\
	 S_{2i}	 & T_{2i}
		\end{bmatrix}\right)^{\#_{\mathbb A}}\right\}\bigoplus T^{\#_{\mathbb A}}_{\frac{n+1}{2}} \bigoplus \left\{\bigoplus_{i=\frac{n+5}{4}}^{\frac{n+1}{2}}\left(\begin{bmatrix}
		T_{n-(2i-2)} &S_{n-(2i-2)}\\
	 S_{2i-1}	 & T_{2i-1}
		\end{bmatrix}\right)^{\#_{\mathbb A}}\right\}.
  \end{align*}
  Now, using the fact that $w_{\mathbb{A}}(\mathbb{T})=w_{\mathbb{A}}(\mathbb{U}^{\#_{\mathbb{A}}}\mathbb{TU})$ for any $\mathbb{T}\in\mathcal{B}_A(\mathcal{H}),$ for $n$ is even, 
		we get
		
		\begin{align*}
		w_{\mathbb{A}}(\mathbb{T})=w_{\mathbb{A}}(\mathbb{T}^{\#_{\mathbb{A}}})=w_{\mathbb{A}}( \mathbb{U}^{\#_{\mathbb{A}}} \mathbb{T}^{\#_{\mathbb{A}}}  \mathbb{U} )&=w_{\mathbb{A}}\left(\left(\displaystyle\bigoplus_{i=1}^{\frac{n}{2}}\begin{bmatrix}
		T_{n-(2i-1)} &S_{n-(2i-1)}\\
	 S_{2i}	 & T_{2i}
		\end{bmatrix}\right)^{\#_{\mathbb A}}\right)\\
		&==w_{\mathbb{A}}\left(\left(\displaystyle\bigoplus_{i=1}^{\frac{n}{2}}\begin{bmatrix}
		T_{n-(2i-1)} &S_{n-(2i-1)}\\
	 S_{2i}	 & T_{2i}
		\end{bmatrix}\right)\right)\\
		&=\max_{1\leq i\leq \frac{n}{2}}\left\{w_\mathbb{A}\left(\begin{bmatrix}
		T_{n-(2i-1)} & S_{n-(2i-1)}\\
            S_{2i} & T_{2i}
		\end{bmatrix}\right)\right\},
				\end{align*}
		where the last equality follows from Lemma \ref{them3.7}.
	
  Using a similar argument for other $n$, we can find the $\mathbb A$-numerical radius.   

 \end{proof}
 \begin{rem}\label{Remark1}
 \begin{enumerate}
     \item[(i)]  If $T_i=O, i=1, \dots, n; i\neq \frac{n+1}{2}$  and $T_{\frac{n+1}{2}}=S_{\frac{n+1}{2}}$, then $$w_{\mathbb{A}}(\mathbb{T})=\displaystyle\max_{1\leq i\leq n}\left\{w_\mathbb{A}\left(\begin{bmatrix}
		O & S_{n-(2i-1)}\\
            S_{2i} & O
		\end{bmatrix}\right)\right\}=\displaystyle\max_{i+j=\frac{n+1}{2}}\left\{w_\mathbb{A}\left(\begin{bmatrix}
		O & S_{2j}\\
            S_{2i} & O
		\end{bmatrix}\right)\right\}.$$
     \item[(ii)] If $S_i=O, i=1, \dots, n; i\neq \frac{n+1}{2}$  and $T_{\frac{n+1}{2}}=S_{\frac{n+1}{2}}$, then $w_{\mathbb{A}}(\mathbb{T})=\displaystyle\max_{1\leq i\leq n}\{w_A(T_i)\}$.
 \end{enumerate}
  \end{rem}
 
  \begin{cor}
  Let $\mathbb T$ be the cross-diagonal matrix as defined in Theorem \ref{Thm1} with $T_{n-(2i-1)}=T_{2i}$ and $S_{n-(2i-1)}=S_{2i}, i=1,\dots, n$. Then 
   \begin{enumerate}
     \item [(i)] If $n$ is even, then
          \begin{align*}\label{}
	w_\mathbb{A}(\mathbb{T})=	\max_{1\leq i\leq \frac{n}{2}}\left\{w_A(T_{2i}+S_{2i}),w_A(T_{2i}-S_{2i}) \right\}.
      \end{align*}
        \item [(ii)] If $n$ and $\frac{n+1}{2}$ are odd, then the assumptions $T_{n-(2i-2)}=T_{2i-1}$ for $1\leq i \leq \frac{n-1}{4}$ and $S_{n-(2i-2)}=S_{2i-1}$ for $\frac{n+7}{4}\leq i \leq \frac{n+1}{2}$ yield that
   {  \scriptsize    \begin{align*}\label{}
	w_\mathbb{A}(\mathbb{T})=	\max\left\{\max_{\substack{1\leq i\leq \frac{n-1}{4}}}\left\{w_A(T_{2i}+S_{2i}),w_A(T_{2i}-S_{2i})\right\},w_A(T_{\frac{n+1}{2}}) ,\max_{\substack{\frac{n+7}{4}\leq i\leq \frac{n+1}{2}}}\left\{w_A(T_{2i-1}+S_{2i-1}),w_A(T_{2i-1}-S_{2i-1})\right\} \right\}.
      \end{align*}}
       \item [(iii)]  If $n$ is odd but $\frac{n+1}{2}$ is even,  then the assumptions $T_{n-(2i-2)}=T_{2i-1}$ for $1\leq i \leq \frac{n-3}{4}$ and $S_{n-(2i-2)}=S_{2i-1}$ for $\frac{n+5}{4}\leq i\leq  \frac{n+1}{2}$ yield that
      {  \scriptsize    \begin{align*}\label{}
	w_\mathbb{A}(\mathbb{T})=	\max\left\{\max_{\substack{1\leq i\leq \frac{n-3}{4}}}\left\{w_A(T_{2i}+S_{2i}),w_A(T_{2i}-S_{2i})\right\},w_A(T_{\frac{n+1}{2}}), \max_{\substack{\frac{n+5}{4}\leq i\leq \frac{n+1}{2}}}\left\{w_A(T_{2i-1}+S_{2i-1}),w_A(T_{2i-1}-S_{2i-1})\right\} \right\}.
      \end{align*}}
      \end{enumerate}
  \end{cor}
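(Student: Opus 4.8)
The plan is to derive this corollary as a direct specialization of Theorem \ref{Thm1}, so the only real ingredient beyond Theorem \ref{Thm1} is the symmetric $2\times 2$ block formula from Lemma \ref{lem0001}(iv). First I would observe that under the hypothesis $T_{n-(2i-1)}=T_{2i}$ and $S_{n-(2i-1)}=S_{2i}$, each of the generic $2\times 2$ blocks appearing in Theorem \ref{Thm1},
\begin{align*}
\begin{bmatrix}
T_{n-(2i-1)} & S_{n-(2i-1)}\\
S_{2i} & T_{2i}
\end{bmatrix},
\end{align*}
collapses to the symmetric form $\begin{bmatrix} T_{2i} & S_{2i}\\ S_{2i} & T_{2i}\end{bmatrix}$. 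Applying Lemma \ref{lem0001}(iv) with the roles of $T_1,T_2$ played by $T_{2i},S_{2i}$ gives at once
\begin{align*}
w_\mathbb{A}\left(\begin{bmatrix} T_{2i} & S_{2i}\\ S_{2i} & T_{2i}\end{bmatrix}\right)=\max\{w_A(T_{2i}+S_{2i}),\,w_A(T_{2i}-S_{2i})\}.
\end{align*}
Substituting this into the even-$n$ formula of Theorem \ref{Thm1}(i) and keeping the outer maximum over $1\leq i\leq \frac n2$ yields part (i) immediately.

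For parts (ii) and (iii) I would proceed in the same spirit, treating the three pieces of the formula in Theorem \ref{Thm1} separately. The middle scalar term $w_A(T_{\frac{n+1}{2}})$ is untouched and simply carries over. On the first family of blocks (indices $1\leq i\leq \frac{n-1}{4}$ or $1\leq i\leq \frac{n-3}{4}$) the hypotheses $T_{n-(2i-1)}=T_{2i}$, $S_{n-(2i-1)}=S_{2i}$ apply verbatim, so Lemma \ref{lem0001}(iv) converts each into $\max\{w_A(T_{2i}+S_{2i}),w_A(T_{2i}-S_{2i})\}$ exactly as in the even case. On the second family (indices $\frac{n+7}{4}\leq i\leq\frac{n+1}{2}$, respectively $\frac{n+5}{4}\leq i\leq\frac{n+1}{2}$), the relevant block is $\begin{bmatrix} T_{n-(2i-2)} & S_{n-(2i-2)}\\ S_{2i-1} & T_{2i-1}\end{bmatrix}$, and here I would invoke the supplementary hypotheses $T_{n-(2i-2)}=T_{2i-1}$ and $S_{n-(2i-2)}=S_{2i-1}$ to symmetrize it to $\begin{bmatrix} T_{2i-1} & S_{2i-1}\\ S_{2i-1} & T_{2i-1}\end{bmatrix}$; Lemma \ref{lem0001}(iv) then produces $\max\{w_A(T_{2i-1}+S_{2i-1}),w_A(T_{2i-1}-S_{2i-1})\}$. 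Taking the overall maximum of the three contributions reproduces the stated formulas.

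Since every step is an application of an already-established identity, there is no genuine analytic obstacle here; the computation is essentially mechanical once Theorem \ref{Thm1} is in hand. The only point requiring care is the \emph{index bookkeeping} in the odd cases: one must match the supplementary equalities to precisely the right blocks and confirm that the parity-dependent index ranges ($\frac{n-1}{4}$, $\frac{n+7}{4}$ versus $\frac{n-3}{4}$, $\frac{n+5}{4}$) are the ones inherited from the corresponding cases of Theorem \ref{Thm1}. I would therefore verify explicitly that the hypotheses on the second family are stated over the same index set as the blocks they are meant to symmetrize, which is exactly how the corollary is phrased, so no further adjustment is needed.
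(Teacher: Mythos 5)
Your proof is correct and matches the intended derivation: the paper gives no separate proof of this corollary because it is meant to follow immediately from Theorem \ref{Thm1} exactly as you describe, by symmetrizing each $2\times 2$ block under the stated equalities and applying Lemma \ref{lem0001}(iv). (One small point you glossed over a bit charitably: in part (ii) the paper attaches the hypothesis $T_{n-(2i-2)}=T_{2i-1}$ to the range $1\le i\le \frac{n-1}{4}$ rather than to $\frac{n+7}{4}\le i\le \frac{n+1}{2}$, where it is actually needed to symmetrize the second family of blocks --- an apparent typo in the statement, not a gap in your argument.)
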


\begin{remark}
   For $S_i\in \mathcal{B}_A(\mathcal{H})$ with $S_{n-(2i-1)}=S_{2i} ~and~ S_{n-(2i-2)}=S_{2i-1},  i=1,\dots, n$, we have 
    $$w_A(off-diag(S_1,\dots, S_n))=\displaystyle\max_{1\leq i\leq n}w_A(S_{2i}).$$
\end{remark}

  The following result is an interesting formula for a cross-diagonal block matrix with different unitary matrix. But note that using the same unitary matrix it is not easy to calculate the $\mathbb{A}$-numerical radius of the matrix mentioned in Theorem \ref{Thm1}.
 \begin{remark}\label{Rem2.2}

Let $ \mathbb{R}=\begin{bmatrix}
		T & O &  & \cdots&  & O&  S\\
		O  & T& & & & S & O\\
		\vdots  &  &\ddots &&\adots  & & \vdots\\
  	  &  &\adots& &\ddots  & &  \\
		O &S &  & && T & O \\
		S  & O  & &\cdots&  &O & T
		\end{bmatrix}$ \[=
    \left[\begin{array}{@{}c|c@{}}
   diag(T,\dots, T) &  off-diag(S,\dots, S)  \\ \hline
    off-diag(S,\dots, S)  &   diag(T,\dots, T)
    \end{array}\right].
    \] If $n$ is even, then,
  $$w_{\mathbb{A}}(\mathbb{R})=\max\{w_A(T+S), w_A(T-S) \}.$$
 and if n is odd, then
\begin{enumerate}
    \item [(i)]  $w_{\mathbb{A}}(\mathbb{R})=\max\{w_A(T+S), w_A(T), w_A(T-S) \}.$
    \item [(ii)]  $w_{\mathbb{A}}(\mathbb{R})=\max\{w_A(T+S), w_A(S), w_A(T-S) \}.$ 
\end{enumerate}
 \end{remark}
 \begin{proof}
    For $n$ even, let
    \[\mathbb{U}=
   \frac{1}{\sqrt{2}} \left[\begin{array}{@{}c|c@{}}
   diag(I,\dots, I) &  off-diag(I,\dots, I)  \\ \hline
    off-diag(I,\dots, I)  &   -diag(I,\dots, I)
    \end{array}\right],
    \]
    $$\mathbb{U}\mathbb{R}^{\#_{\mathbb{A}}}\mathbb{U}^{\#_{\mathbb{A}}}=diag(T^{\#_A}+S^{\#_A}, \dots, T^{\#_A}+S^{\#_A}, T^{\#_A}-S^{\#_A}, \dots, T^{\#_A}-S^{\#_A}).$$
Now, using the Equation \ref{eqn_00000001.4}, and $ w_{\mathbb{A}}(\mathbb{R})=w_{\mathbb{A}}(\mathbb{R}^{\#_{\mathbb{A}}})=w_{\mathbb{A}}( \mathbb{U}^{\#_{\mathbb{A}}} \mathbb{R}^{\#_{\mathbb{A}}}  \mathbb{U} )$, we have
    \begin{align*}
        w_{\mathbb{A}}(\mathbb{R})&=w_A\{diag(T+S, \dots, T+S, T-S, \dots, T-S)\}\\
        &=\max\{w_A(T+S), w_A(T-S) \}.
    \end{align*}
    The proof is similar for $n$ odd with the unitary matrix
     \[\mathbb{U}=
    \frac{1}{\sqrt{2}}\left[\begin{array}{@{}ccc@{}}
   diag(I,\dots, I) & O&  off-diag(I,\dots, I)  \\ 
   O &\sqrt{2}I&O\\
    off-diag(I,\dots, I)  & O&  -diag(I,\dots, I)
    \end{array}\right].
    \]
 \end{proof}
\begin{remark}
    If $T=S$, we have  $w_{\mathbb{A}}(\mathbb{R})=2w_A(T)$. 
\end{remark}
\begin{remark}
    For $S\in \mathcal{B}_A(\mathcal{H})$, and taking $T=0$ in Remark \ref{Rem2.2}, we get\\
    $w_A(off-diag(S,\dots, S))=w_A(S)$, which is a generalization of the Lemma  \ref{lem0001} (iv).  
\end{remark}

    \begin{remark}\label{Remark2.4}
From \cite[Theorem 2.1]{KITSAT}, for $T, S\in \mathcal{B}_A(\mathcal{H})$, we have  \\
       $  w_{\mathbb{A}}\left(\begin{bmatrix}
		T & S &  & \cdots&  & S&  S\\
		S  & T& & & & S & S\\
		\vdots  &  &\ddots &&\adots  & & \vdots\\
  	  &  &\adots& &\ddots  & &  \\
		S &S &  & && T & S \\
		S  & S  & &\cdots&  &S & T
		\end{bmatrix}_{n\times n}\right)=\max\{w_A(T+(n-1)S), w_A(T-S)\}.$ 
      \end{remark}
In the following theorem, we generalize these facts when $n$ is even. 
\begin{theorem}\label{Thm2.5}
    Let $R, S, T \in \mathcal{B}_A(\mathcal{H})$, $\mathbb{M}=\begin{bmatrix}
		R & S &  & \cdots&  & S&  T\\
		S  & R& & & & T & S\\
		\vdots  &  &\ddots &&\adots  & & \vdots\\
  	  &  &\adots& &\ddots  & &  \\
		S &T &  & && R & S \\
		T  & S  & &\cdots&  &S & R
		\end{bmatrix}_{n\times n}$ and  $n$ is \\an even integer.\\ Then
  $  w_{\mathbb{A}}\left(\mathbb{M}\right)=\max\{w_A(R+T+(n-2)S),w_A(R+T-2S), w_A(R-T)\}.$ 
\end{theorem}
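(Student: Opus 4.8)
The plan is to exploit the fact that $\mathbb{M}$ is a linear combination, with operator coefficients, of three mutually commuting scalar pattern-matrices, and to diagonalize those patterns simultaneously by an $\mathbb{A}$-unitary transformation, exactly in the spirit of the proof of Theorem \ref{Thm1}. Let $\mathbb{I}$ denote the $n\times n$ block-identity, $\mathbb{J}$ the $n\times n$ matrix each of whose entries is $I$, and $\mathbb{K}=\mathrm{off\text{-}diag}(I,\dots,I)$ the reversal matrix. Since $n$ is even, the main diagonal and the anti-diagonal are disjoint, and one checks at once that
$$\mathbb{M}=(R-S)\,\mathbb{I}+S\,\mathbb{J}+(T-S)\,\mathbb{K}.$$
Because $\mathbb{I},\mathbb{J},\mathbb{K}$ are scalar matrices with $\mathbb{J}\mathbb{K}=\mathbb{K}\mathbb{J}=\mathbb{J}$ and $\mathbb{K}^2=\mathbb{I}$, they commute and are simultaneously orthogonally diagonalizable, while the operator coefficients commute with these scalar patterns. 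This reduces the computation to reading off the joint spectrum of the patterns.

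Concretely, $\mathbb{M}$ leaves invariant the symmetric subspace $\{x:\,x_i=x_{n+1-i}\}$ and the antisymmetric subspace $\{x:\,x_i=-x_{n+1-i}\}$. A direct computation shows that on the antisymmetric subspace $\mathbb{M}$ acts diagonally as the operator $R-T$, whereas on the symmetric subspace it acts as the $\tfrac{n}{2}\times\tfrac{n}{2}$ pattern $(R+T-2S)\mathbb{I}_{n/2}+2S\,\mathbb{J}_{n/2}$. Splitting the symmetric subspace further into the one-dimensional all-ones direction and its $(\tfrac n2-1)$-dimensional sum-zero complement, the two corresponding scalar actions become $R+T+(n-2)S$ and $R+T-2S$. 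Thus the three blocks $R+T+(n-2)S$, $R+T-2S$, and $R-T$ appear, with multiplicities $1$, $\tfrac n2-1$, and $\tfrac n2$, which indeed sum to $n$.

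To turn this into a rigorous $\mathbb{A}$-numerical radius identity I would fix a real orthogonal matrix $Q=[q_{ij}]$ whose columns form an orthonormal common eigenbasis (the normalized all-ones vector, an orthonormal basis of the symmetric sum-zero space, and an orthonormal basis of the antisymmetric space) and lift it to the block operator $\mathbb{U}=[q_{ij}I]$. As in Theorem \ref{Thm1}, one verifies that $\mathbb{U}^{\#_{\mathbb{A}}}=\mathbb{A}^{\dagger}\mathbb{U}^{*}\mathbb{A}$ has entries of the form $q_{ji}P_{\overline{\mathcal{R}(A)}}$ and that $\mathbb{U}\mathbb{U}^{\#_{\mathbb{A}}}=\mathbb{U}^{\#_{\mathbb{A}}}\mathbb{U}=\mathrm{diag}(P_{\overline{\mathcal{R}(A)}},\dots,P_{\overline{\mathcal{R}(A)}})$, so $\mathbb{U}$ is $\mathbb{A}$-unitary. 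Passing to $\mathbb{A}$-adjoints (so that the entries $R^{\#_A},S^{\#_A},T^{\#_A}$ have range in $\overline{\mathcal{R}(A)}$, which lets the projection factors be absorbed via $\mathcal{R}(X^{\#_A})\subseteq\overline{\mathcal{R}(A)}$), and using Lemma \ref{lemma1} together with the additivity of $\#_A$, the conjugation yields
$$\mathbb{U}^{\#_{\mathbb{A}}}\mathbb{M}^{\#_{\mathbb{A}}}\mathbb{U}=\left(\big(R+T+(n-2)S\big)\oplus\Big(\bigoplus_{j=1}^{\frac{n}{2}-1}(R+T-2S)\Big)\oplus\Big(\bigoplus_{j=1}^{\frac{n}{2}}(R-T)\Big)\right)^{\#_{\mathbb{A}}}.$$
Invoking $w_{\mathbb{A}}(\mathbb{M})=w_{\mathbb{A}}(\mathbb{M}^{\#_{\mathbb{A}}})=w_{\mathbb{A}}(\mathbb{U}^{\#_{\mathbb{A}}}\mathbb{M}^{\#_{\mathbb{A}}}\mathbb{U})$ and then Lemma \ref{them3.7} gives
$$w_{\mathbb{A}}(\mathbb{M})=\max\{w_A(R+T+(n-2)S),\,w_A(R+T-2S),\,w_A(R-T)\},$$
as claimed.

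The conceptual heart is the simultaneous diagonalization, which is elementary; the main obstacle is the careful bookkeeping in the $\mathbb{A}$-setting. Because $R,S,T$ are operators rather than scalars, one must confirm that the scalar change of basis $Q$ genuinely intertwines $\mathbb{M}^{\#_{\mathbb{A}}}$ with its diagonalized form and must track the projections $P_{\overline{\mathcal{R}(A)}}$ produced by $\#_{\mathbb{A}}$, verifying that they are harmlessly absorbed---precisely the delicate step already present in the proof of Theorem \ref{Thm1}. Writing down a fully explicit orthonormal eigenbasis (rather than merely asserting its existence) is optional but makes the verification of the block form completely transparent.
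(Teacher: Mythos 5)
Your argument is correct, but it follows a genuinely different route from the paper's. The paper conjugates $\mathbb{M}^{\#_{\mathbb{A}}}$ by the same permutation-type $\mathbb{A}$-unitary $\mathbb{U}$ built in Theorem \ref{Thm1}, which reorganizes $\mathbb{M}$ into an $\frac n2\times\frac n2$ operator matrix whose diagonal blocks are $\begin{bmatrix} R&T\\ T&R\end{bmatrix}$ and whose off-diagonal blocks are all $\begin{bmatrix} S&S\\ S&S\end{bmatrix}$; it then invokes the circulant-type formula of Remark \ref{Remark2.4} (i.e.\ \cite[Theorem 2.1]{KITSAT}) at the block level to reduce to $\max\bigl\{w_{\mathbb{A}}\bigl(\begin{bmatrix} R&T\\T&R\end{bmatrix}+(\tfrac n2-1)\begin{bmatrix} S&S\\S&S\end{bmatrix}\bigr),\, w_{\mathbb{A}}\bigl(\begin{bmatrix} R&T\\T&R\end{bmatrix}-\begin{bmatrix} S&S\\S&S\end{bmatrix}\bigr)\bigr\}$, and finishes with Lemma \ref{lem0001}(iv). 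You instead write $\mathbb{M}=(R-S)\mathbb{I}+S\mathbb{J}+(T-S)\mathbb{K}$ (valid precisely because $n$ even keeps the diagonal and anti-diagonal disjoint), simultaneously diagonalize the commuting scalar patterns $\mathbb{I},\mathbb{J},\mathbb{K}$ by a real orthogonal $Q$ lifted to an $\mathbb{A}$-unitary, and conclude with Lemma \ref{them3.7}; I checked the joint eigendata (eigenvalues $(n,1)$, $(0,1)$, $(0,-1)$ of $(\mathbb{J},\mathbb{K})$ with multiplicities $1$, $\tfrac n2-1$, $\tfrac n2$) and they yield exactly $R+T+(n-2)S$, $R+T-2S$, $R-T$. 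Your route is self-contained (it bypasses both Remark \ref{Remark2.4} and Lemma \ref{lem0001}(iv)), makes the multiplicities explicit, and transparently shows why the even-ness of $n$ is essential, which illuminates the paper's subsequent counterexamples for odd $n$; the paper's route is shorter given the results already imported from \cite{KITSAT}. The only part you leave schematic is writing down the orthonormal eigenbasis and verifying the $\mathbb{A}$-unitarity of the lifted $\mathbb{U}$ and the absorption of the projections $P_{\overline{\mathcal{R}(A)}}$, but these follow verbatim the computations already carried out in Theorem \ref{Thm1}, so this is not a gap.
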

\begin{proof}
Using the same $\mathbb{A}$-unitary operator $\mathbb{U}$ as used in Theorem \ref{Thm1}, we have, 
$$
\mathbb{U}^{\#_{\mathbb{A}}} \mathbb{M}^{\#_{\mathbb{A}}}  \mathbb{U}=\begin{bmatrix}
		\begin{bmatrix}
   R &T\\
   T &R
\end{bmatrix} & \begin{bmatrix}
   S & S\\
   S & S
\end{bmatrix} & \cdots&   \begin{bmatrix}
   S & S\\
   S & S
\end{bmatrix}\\
		 \begin{bmatrix}
   S & S\\
   S & S
\end{bmatrix} & \ddots &\ddots&  \vdots\\
		\vdots  &\ddots& \ddots  &  \vdots\\
		\begin{bmatrix}
   S & S\\
   S & S
\end{bmatrix} & \cdots &\cdots  &\begin{bmatrix}
   R &T\\
   T &R
\end{bmatrix} 
		\end{bmatrix}_{\frac{n}{2}\times\frac {n}{2}}^{\#_{\mathbb{A}}}
.$$ 
Now, using the fact that $w_{\mathbb{A}}(\mathbb{T})=w_{\mathbb{A}}(\mathbb{U}^{\#_{\mathbb{A}}}\mathbb{TU})$ for any $\mathbb{T}\in\mathcal{B}_A(\mathcal{H}),$  and Remark \ref{Remark2.4}, we have
\begin{align*}
    w_{\mathbb{A}}(\mathbb{M})=w_{\mathbb{A}}(\mathbb{M}^{\#_{\mathbb{A}}})&=w_{\mathbb{A}}( \mathbb{U}^{\#_{\mathbb{A}}} \mathbb{M}^{\#_{\mathbb{A}}}  \mathbb{U} )=w_{\mathbb{A}}\left(\begin{bmatrix}
		\begin{bmatrix}
   R &T\\
   T &R
\end{bmatrix} & \begin{bmatrix}
   S & S\\
   S & S
\end{bmatrix} & \cdots&   \begin{bmatrix}
   S & S\\
   S & S
\end{bmatrix}\\
		 \begin{bmatrix}
   S & S\\
   S & S
\end{bmatrix} & \ddots &\ddots&  \vdots\\
		\vdots  &\ddots& \ddots  &  \vdots\\
		\begin{bmatrix}
   S & S\\
   S & S
\end{bmatrix} & \cdots &\cdots  &\begin{bmatrix}
   R &T\\
   T &R
\end{bmatrix} 
		\end{bmatrix}_{\frac{n}{2}\times\frac {n}{2}}\right)\\
  &=\max\left\{w_A\left(\begin{bmatrix}
   R &T\\
   T &R
\end{bmatrix}+\left(\frac{n}{2}-1\right)\begin{bmatrix}
   S & S\\
   S & S
\end{bmatrix}\right), w_A\left(\begin{bmatrix}
   R &T\\
   T &R
\end{bmatrix}-\begin{bmatrix}
   S & S\\
   S & S
\end{bmatrix}\right)\right\}\\
&=\max\{w_A(R+T+(n-2)S),w_A(R+T-2S), w_A(R-T)\},
\end{align*}
where the last equality follows from Lemma \ref{lem0001} (iv).
\end{proof}

It is a natural question in our mind what will happen for $n$ odd?

\begin{remark}
    For $n$ odd, it is not possible to partition the matrix mentioned in Theorem \ref{Thm2.5} into $2\times 2$ blocks, so we can not proceed for $n$ odd as in the case of $n$ even.  For $\mathbb{M}=\begin{bmatrix}
   2I & 3I &2I\\
   3I & 2I &3I\\
   2I & 3I &2I
\end{bmatrix}$, $\mathbb{A}=\begin{bmatrix}
   I & 0 &0\\
   0 & I &0\\
   0 & 0 &I
\end{bmatrix}$, we have $w_{\mathbb{A}}(\mathbb{M})\approx 7.358899$, whereas the right-hand side of Theorem \ref{Thm2.5} is equal to $7$. On the other hand, For $\mathbb{M}=\begin{bmatrix}
   3I & 2I &3I\\
   2I & 3I &2I\\
   3I & 2I &3I
\end{bmatrix}$, with same $\mathbb{A}$, we have $w_{\mathbb{A}}(\mathbb{M})\approx 7.7016$, whereas the right-hand side of Theorem \ref{Thm2.5} is equal to $8$. So, for the $n$ odd case, the two sides of Theorem \ref{Thm2.5} are not comparable in general.
\end{remark}

\section{$ \mathbb{A}$-numerical radii of $n\times n$ left circulant and skew left circulant operator matrices}
Recall that $\mbox{lcirc}(T_1,\dots,T_n)=\mbox{circ}(T_n,\dots,T_1)J,$ 
 where $J=$ off-diag$(I, \dots, I)$ and\\ $ \mbox{slcirc}(T_1,\dots,T_n)=\mbox{scirc}(T_n,\dots,T_1)J$ with the same $J$. So, all the results in \cite{KITSAT} concerning the $\mathbb{A}$-operator norms of circulant and skew circulant operator matrices are carried out to the left circulant and skew left circulant operator matrices. However, the results in \cite{KITSAT} related to the $\mathbb{A}$-numerical radius cannot be transferred to left circulant and skew left circulant operator matrices; for example, see Remark \ref{Remark3.4}.

The aim of this section is to present certain $\mathbb{A}$-numerical radius equalities and inequalities for $n\times n$ left circulant and skew left circulant operator matrices.
  
\begin{theorem}\label{Thm2}
		Let $T_i\in \mathcal{B}_A(\mathcal{H})$ for $1\leq i\leq n$. Then 
		\begin{align*}
		w_\mathbb{A}(\mathbb{T}_{lcirc})=w_{\mathbb{A}}\left(\begin{bmatrix}
		 \sum_{i=1}^{n}T_{i} & O  &O &   \cdots & O\\
		 O  & O  &O&  \cdots &  \sum_{i=1}^{n}{w}^{(n-1)(1-i)}T_{i}  \\
   O &  O &  O& \adots & O\\
		 \vdots & \vdots &\sum_{i=1}^{n}{w}^{2(1-i)}T_{i}& \vdots & \vdots \\
		 O  &\sum_{i=1}^{n}{w}^{(1-i)}T_{i}   &O & \cdots &   O
		 \end{bmatrix}\right).
		\end{align*}
	\end{theorem}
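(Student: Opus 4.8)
The plan is to produce an explicit $\mathbb{A}$-unitary operator that simultaneously diagonalizes the circulant factor of $\mathbb{T}_{lcirc}$ and converts the order-reversing factor $J$ into a permutation of the Fourier modes; the matrix on the right of the statement is exactly the image of $\mathbb{T}_{lcirc}$ under this conjugation. Concretely, I set $w=e^{2\pi i/n}$ and let $\mathbb{F}$ be the block Fourier matrix whose $(j,k)$ entry is $\tfrac{1}{\sqrt n}\,w^{-(j-1)(k-1)}I$. First I would record the entry formula $(\mathbb{T}_{lcirc})_{jk}=T_{((j+k-2)\bmod n)+1}$, which makes transparent that a left circulant depends only on $j+k$ (hence is ``symmetric''), together with the factorization $\mathbb{T}_{lcirc}=\mathrm{circ}(T_n,\dots,T_1)J$ recalled at the start of the section; the reversal $J$ is what ultimately forces the \emph{anti}-diagonal, rather than diagonal, pattern of nonzero blocks. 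Note $\mathbb{T}_{lcirc}\in\mathcal{B}_{\mathbb{A}}(\mathbb{H})$ automatically, since $\mathcal{B}_A(\mathcal{H})$ is an algebra.

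The next step is to check that $\mathbb{F}$ is $\mathbb{A}$-unitary. Using Lemma \ref{lemma1} together with the elementary identity $(\lambda I)^{\#_A}=\bar\lambda\,P_{\overline{\mathcal{R}(A)}}$, the $(p,q)$ block of $\mathbb{F}^{\#_{\mathbb{A}}}$ is $\tfrac{1}{\sqrt n}w^{(p-1)(q-1)}P_{\overline{\mathcal{R}(A)}}$, and the character orthogonality relation $\tfrac1n\sum_{k=0}^{n-1}w^{k(q-p)}=\delta_{pq}$ then yields $\mathbb{F}^{\#_{\mathbb{A}}}\mathbb{F}=\mathbb{F}\mathbb{F}^{\#_{\mathbb{A}}}=\mathbb{P}$, where $\mathbb{P}=\bigoplus_{i=1}^{n}P_{\overline{\mathcal{R}(A)}}$; this is precisely the $\mathbb{A}$-unitarity condition in the style of the proof of Theorem \ref{Thm1}. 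I would then compute the conjugation blockwise: the $(p,q)$ block of $\mathbb{F}^{\#_{\mathbb{A}}}\mathbb{T}_{lcirc}\mathbb{F}$ equals $P_{\overline{\mathcal{R}(A)}}\cdot\tfrac1n\sum_{j,k}w^{(p-1)(j-1)-(k-1)(q-1)}T_{((j+k-2)\bmod n)+1}$. Reindexing by $c\equiv(j+k-2)\bmod n$ and summing the remaining free index produces the factor $\tfrac1n\sum_{a}w^{a(p+q-2)}$, which is $1$ when $p+q\equiv 2\pmod n$ and $0$ otherwise. Hence the block vanishes unless $p+q\equiv 2\pmod n$, and on those surviving positions, namely $(1,1)$ and the anti-diagonal $(p,n-p+2)$ for $2\le p\le n$, it equals $P_{\overline{\mathcal{R}(A)}}\sum_{i=1}^{n}w^{(q-1)(1-i)}T_i$, which is exactly $P_{\overline{\mathcal{R}(A)}}$ times the corresponding block of the target matrix $M$ in the statement.

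Writing $M$ for that right-hand matrix, the computation reads $\mathbb{F}^{\#_{\mathbb{A}}}\mathbb{T}_{lcirc}\mathbb{F}=\mathbb{P}M$. To discard the projection I would use that $A\,P_{\overline{\mathcal{R}(A)}}=A$, because $A$ annihilates $\mathcal{N}(A)=\overline{\mathcal{R}(A)}^{\perp}$, whence $\mathbb{A}\mathbb{P}=\mathbb{A}$ and therefore $\langle \mathbb{P}M x,x\rangle_{\mathbb{A}}=\langle \mathbb{A}\mathbb{P}Mx,x\rangle=\langle \mathbb{A}Mx,x\rangle=\langle M x,x\rangle_{\mathbb{A}}$ for every $x$; this gives $w_{\mathbb{A}}(\mathbb{P}M)=w_{\mathbb{A}}(M)$. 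Combining this with the $\mathbb{A}$-unitary invariance $w_{\mathbb{A}}(\mathbb{F}^{\#_{\mathbb{A}}}\mathbb{T}_{lcirc}\mathbb{F})=w_{\mathbb{A}}(\mathbb{T}_{lcirc})$ closes the argument. I expect the main obstacle to be the faithful transfer of the classical scalar Fourier diagonalization into the semi-Hilbert setting: one must track the projections $P_{\overline{\mathcal{R}(A)}}$ introduced by $\#_A$ at every stage and verify they are harmless, and one must carry out the double character sum precisely enough to land on the correct anti-diagonal positions with the exact exponents $w^{(q-1)(1-i)}$ of the statement (a sign or convention slip here would replace $M$ by its entrywise $A$-adjoint). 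Everything else is bookkeeping supported by Lemma \ref{lemma1}, the invariance of $w_{\mathbb{A}}$ under $\mathbb{A}$-unitary conjugation, and \eqref{eqn_00000001.4}.
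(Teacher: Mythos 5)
Your proof is correct and follows essentially the same route as the paper: conjugation by the block Fourier matrix, verified to be $\mathbb{A}$-unitary exactly as you describe, followed by the invariance of $w_{\mathbb{A}}$ under $\mathbb{A}$-unitary conjugation, with the character sum landing the nonzero blocks on the positions $p+q\equiv 2 \pmod n$. The only cosmetic difference is that the paper first passes to $\mathbb{T}_{lcirc}^{\#_{\mathbb{A}}}$ via \eqref{eqn_00000001.4} so that the projections $P_{\overline{\mathcal{R}(A)}}$ are absorbed using $\mathcal{R}(T_i^{\#_A})\subseteq\overline{\mathcal{R}(A)}$, whereas you conjugate $\mathbb{T}_{lcirc}$ directly and remove the residual projection $\mathbb{P}$ by the equally valid observation that $\mathbb{A}\mathbb{P}=\mathbb{A}$.
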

	\begin{proof}
		Let $\mathbb{T}_{lcirc}=\begin{bmatrix}
		T_{1} & T_{2} & \cdots & T_{n-1}   &   T_{n}\\
		T_{2}  &  \cdots &  T_{n-1}& T_{n} & T_{1}\\
		\vdots & \adots &\adots & \adots & \vdots \\
  T_{n-1}  & T_{n} &  T_{1}&\cdots  &  T_{n-2}\\
		T_{n}  & T_{1}  &\cdots&  T_{n-2} & T_{n-1}
		\end{bmatrix},$ 
		let $1,\omega,\omega^2,\dots, \omega^{n-1}$ be $n$ roots of unity with $\omega=e^{\frac{2\pi i}{n}}$
		
		and $\mathbb{U}=\frac{1}{\sqrt{n}}\begin{bmatrix}
		I & I& I  &  \cdots  &   I\\
		I & \omega I& \omega^2I & \cdots  &   \omega^{n-1}I\\
		I & \omega^2I& \omega^4I &  \cdots  &   \omega^{n-2}I\\
		\vdots & \vdots &\vdots &\ddots& \vdots\\
		I & \omega^{n-1}I& \omega^{n-2}I  &  \cdots  &   \omega I\\
		\end{bmatrix}.$
		
		It can be observed that  that $ \bar{\omega}=\omega^{n-1}, \bar{\omega}^2=\omega^{n-2},\cdots, \bar{\omega}^k=\omega^{n-k}$, $k=0, 1,\dots, n-1$, so
		
		$\mathbb{U}^{\#_{\mathbb{A}}}=\frac{1}{\sqrt{n}}\begin{bmatrix}
		P_{\overline{\mathcal{R}(A)}} & P_{\overline{\mathcal{R}(A)}}& P_{\overline{\mathcal{R}(A)}}  & \cdots  &   P_{\overline{\mathcal{R}(A)}}\\
		P_{\overline{\mathcal{R}(A)}} & \omega^{n-1}P_{\overline{\mathcal{R}(A)}}& \omega^{n-2}P_{\overline{\mathcal{R}(A)}}  & \cdots & \omega P_{\overline{\mathcal{R}(A)}}\\
		P_{\overline{\mathcal{R}(A)}} & \omega^{n-2}P_{\overline{\mathcal{R}(A)}}& \omega^{n-4}P_{\overline{\mathcal{R}(A)}}  & \cdots  &  \omega^2P_{\overline{\mathcal{R}(A)}}\\
		\vdots & \vdots &\vdots & \ddots &\vdots\\
		P_{\overline{\mathcal{R}(A)}}&\omega P_{\overline{\mathcal{R}(A)}}& \omega^2P_{\overline{\mathcal{R}(A)}}  & \cdots  &  \omega^{n-1}P_{\overline{\mathcal{R}(A)}}\\
		\end{bmatrix}$ and
		
		$\mathbb{UU}^{\#_{\mathbb{A}}}= \begin{bmatrix}
		P_{\overline{\mathcal{R}(A)}} & O  & \cdots  &   O\\
		O  & P_{\overline{\mathcal{R}(A)}} & \cdots  & O\\
		\vdots & \vdots &\ddots & \vdots \\
		O  & O  &\cdots&  P_{\overline{\mathcal{R}(A)}}
		\end{bmatrix}=\mathbb{U}^{\#_{\mathbb{A}}}\mathbb{U}.$
		
		 Now, using Lemma \ref{lemma1}, and the fact that $w_{\mathbb{A}}(\mathbb{T})=w_{\mathbb{A}}(\mathbb{UTU}^{\#_{\mathbb{A}}})$ for any $\mathbb{T}\in\mathcal{B}_A(\mathcal{H}),$
		we get		
		\begin{align*}
		w_{\mathbb{A}}(\mathbb{T}_{lcirc})&=w_{\mathbb{A}}(\mathbb{T}_{lcirc}^{\#_{\mathbb{A}}})=w_{\mathbb{A}}(\mathbb{UT}_{lcirc}^{\#_{\mathbb{A}}}\mathbb{U}^{\#_{\mathbb{A}}})\\
  &= w_{\mathbb{A}}\left( \begin{bmatrix}
		\sum_{i=1}^{n}T_{i}^{\#_A} & O  & \cdots &   O & O\\
		O  & O  &\cdots&  O &  \sum_{i=1}^{n}w^{(i-1)}T_{i}^{\#_A}\\
  O &  O &  \cdots & \sum_{i=1}^{n}w^{2(i-1)}T_{i}^{\#_A} & O\\
		\vdots & \vdots &\adots & \vdots & \vdots \\
		O  & \sum_{i=1}^{n}w^{(n-1)(i-1)}T_{i}^{\#_A}  &\cdots & O &   O
		\end{bmatrix}\right)\\
  &=w_{\mathbb{A}}\left(\begin{bmatrix}
		 \sum_{i=1}^{n}T_{i} & O  & \cdots &   O & O\\
		 O  & O  &\cdots&  O &  \sum_{i=1}^{n}{w}^{(i-1)}T_{i}  \\
   O &  O &  \cdots & \sum_{i=1}^{n}{w}^{2(i-1)}T_{i} & O\\
		 \vdots & \vdots &\adots & \vdots & \vdots \\
		 O  &\sum_{i=1}^{n}{w}^{(n-1)(i-1)}T_{i}   &\cdots & O &   O
		 \end{bmatrix}^{\#_{\mathbb{A}}}\right)\\
&=w_{\mathbb{A}}\left(\begin{bmatrix}
		 \sum_{i=1}^{n}T_{i} & O  & O &  \cdots & O\\
		 O  & O  &O&  \cdots &  \sum_{i=1}^{n}{w}^{(n-1)(1-i)}T_{i}  \\
   O &  O &  O & \adots & O\\
		 \vdots & \vdots &\sum_{i=1}^{n}{w}^{2(1-i)}T_{i}& \vdots & \vdots \\
		 O  &\sum_{i=1}^{n}{w}^{(1-i)}T_{i}   &O & \cdots &   O
		 \end{bmatrix}^{\#_{\mathbb{A}}}\right)\\
  &=w_{\mathbb{A}}\left(\begin{bmatrix}
		 \sum_{i=1}^{n}T_{i} & O  &O &   \cdots & O\\
		 O  & O  &O&  \cdots &  \sum_{i=1}^{n}{w}^{(n-1)(1-i)}T_{i}  \\
   O &  O &  O& \adots & O\\
		 \vdots & \vdots &\sum_{i=1}^{n}{w}^{2(1-i)}T_{i}& \vdots & \vdots \\
		 O  &\sum_{i=1}^{n}{w}^{(1-i)}T_{i}   &O & \cdots &   O
		 \end{bmatrix}\right).
				\end{align*}
		
	\end{proof}

\begin{rem}
    If $T_i\in \mathcal{B}_A(\mathcal{H})$ for $1\leq i\leq n$, then $w_A(lcirc(T_1, \dots, T_n))=w_A(lcirc(T_n, \dots, T_1))$.
\end{rem}

\begin{cor}

	Let $T_i\in \mathcal{B}_A(\mathcal{H})$ for $1\leq i\leq n$. Then 

 \begin{enumerate}
     \item [(i)] If $n$ is even, we have
          \begin{align}\label{}
	w_\mathbb{A}(\mathbb{T}_{lcirc})\leq 	w_A\left( \sum_{i=1}^{n}T_{i}    \right) + \max\left\{w_A\left(D_{j}    \right)~j=1,2,\dots, n-1 \right\}
      \end{align}
        \item [(ii)] If $n$  is odd, we have
          \begin{align}\label{}
	w_\mathbb{A}(\mathbb{T}_{lcirc})\leq \max\left\{w_A\left( \sum_{i=1}^{n}T_{i}    \right), w_A(D_{\frac{n}{2}}) \right\} + \max\left\{w_A\left(D_{j}    \right)~j=1,2,\dots, n-1 ~\mbox{and}~ j\neq \frac{n}{2} \right\},
      \end{align}
  \end{enumerate}
where $D_j=\sum_{i=1}^{n}{w}^{j(1-i)}T_{i}, 1\leq j\leq n-1$.
\end{cor}

\begin{proof}

\underline{Case-1, $n$ is even}:
Using Theorem \ref{Thm2}, we have
    \begin{align*}
		w_\mathbb{A}(\mathbb{T}_{lcirc})&=w_{\mathbb{A}}\left(\begin{bmatrix}
		\sum_{i=1}^{n}T_{i} & O  & \cdots &   O & O\\
		O  & O  &\cdots&  O &  D_{n-1} \\
  O &  O &  \cdots & .... & O\\
		\vdots & \vdots &\adots & \vdots & \vdots \\
		O  &D_1   &\cdots & O &   O
		\end{bmatrix}\right)\\
  &\leq w_{\mathbb{A}}\left(\begin{bmatrix}
		\sum_{i=1}^{n}T_{i} & O  & \cdots &   O & O\\
		O  & O  &\cdots&  O &  O  \\
  O &  O &  \cdots & O & O\\
		\vdots & \vdots &\adots & \vdots & \vdots \\
		O  &O   &\cdots & O &   O
		\end{bmatrix}\right)+w_{\mathbb{A}}\left(\begin{bmatrix}
		O & O  & \cdots &   O & O\\
		O  & O  &\cdots&  O &  D_{n-1} \\
  O &  O &  \cdots & .... & O\\
		\vdots & \vdots &\adots & \vdots & \vdots \\
		O  & D_{1}   &\cdots & O &   O
		\end{bmatrix}\right)\\
  &\leq w_A\left( \sum_{i=1}^{n}T_{i}    \right) + w_{\mathbb{A}}\left(\begin{bmatrix}
		O & O  & \cdots &   O & O\\
		O  & O  &\cdots&  O &   w_A\left( D_{n-1}\right)  \\
  O &  O &  \cdots & .... & O\\
		\vdots & \vdots &\adots & \vdots & \vdots \\
		O  & w_A\left( D_1\right)   &\cdots & O &   O
		\end{bmatrix}\right)\\
  &(\mbox{by Lemma} \ref{LEMA1.7} ~\mbox{with the identity} D_j=D_{n-j})\\
  &=w_A\left( \sum_{i=1}^{n}T_{i}    \right) + \rho_{\mathbb{A}}\left(\begin{bmatrix}
		O & O  & \cdots &   O & O\\
		O  & O  &\cdots&  O &   w_A\left( D_{n-1}\right)  \\
  O &  O &  \cdots & .... & O\\
		\vdots & \vdots &\adots & \vdots & \vdots \\
		O  & w_A\left( D_1\right)   &\cdots & O &   O
		\end{bmatrix}\right)\\
  &=w_A\left( \sum_{i=1}^{n}T_{i}    \right) + \left(\rho_{\mathbb{A}}\left(\begin{bmatrix}
		O & O  & \cdots &   O & O\\
		O  & w_A^2\left( D_{n-1}\right)  &\cdots&  O &   O  \\
  O &  O &  w_A^2\left( D_{n-2}\right) & \dots & O\\
		\vdots & \vdots &\adots & \ddots & \vdots \\
		O  & O   &\cdots & O &  w_A^2\left( D_1\right)
		\end{bmatrix}\right)\right)^\frac{1}{2}\\
  &\hspace{7cm}(\mbox{by Lemma} \ref{l87}~ \mbox{k=2} )\\
   &=w_A\left( \sum_{i=1}^{n}T_{i}    \right) + \max\left\{w_A\left(D_{j}    \right)~j=1,2,\dots, n-1 \right\}.
		\end{align*}
  \underline{Case-2, $n$ is odd}:
Using Theorem \ref{Thm2}, we have
    \begin{align*}
		w_\mathbb{A}(\mathbb{T}_{lcirc})&=w_{\mathbb{A}}\left(\begin{bmatrix}
		\sum_{i=1}^{n}T_{i} & O  & \cdots &   O & O& O\\
		O  & O  &\cdots&  O & O& D_{n-1} \\
  O &  O &  \ddots & \adots  & \adots& O\\
   O &  O & & D_{\frac{n}{2}}  & \adots  & O\\
		\vdots & \vdots &\adots & \vdots & \ddots&\vdots \\
		O  &D_1   &\cdots & O & O&  O
		\end{bmatrix}\right)\\
  &\leq w_{\mathbb{A}}\left(\begin{bmatrix}
		\sum_{i=1}^{n}T_{i} & O  & \cdots &   O & O& O\\
		O  & O  &\cdots&  O & O& O \\
  O &  O &  \ddots & \adots  & \adots& O\\
   O &  O & & D_{\frac{n}{2}}  & \adots  & O\\
		\vdots & \vdots &\adots & \vdots & \ddots&\vdots \\
		O  & O  &\cdots & O & O&  O
		\end{bmatrix}\right)+w_{\mathbb{A}}\left(\begin{bmatrix}
		O & O  & \cdots &   O & O& O\\
		O  & O  &\cdots&  O & O& D_{n-1} \\
  O &  O &  \ddots & \adots  & \adots& O\\
   O &  O & & O  & \adots  & O\\
		\vdots & \vdots &\adots & \vdots & \ddots&\vdots \\
		O  &D_1   &\cdots & O & O&  O
		\end{bmatrix}\right)\\
  &\leq \max\left\{w_A\left( \sum_{i=1}^{n}T_{i}    \right), w_A(D_{\frac{n}{2}}) \right\}+ w_{\mathbb{A}}\left(\begin{bmatrix}
	O & O  & \cdots &   O & O& O\\
		O  & O  &\cdots&  O & O&w_A\left( D_{n-1}\right) \\
  O &  O &  \ddots & \adots  & \adots& O\\
   O &  O & & O  & \adots  & O\\
		\vdots & \vdots &\adots & \vdots & \ddots&\vdots \\
		O  &w_A\left( D_{1}\right)   &\cdots & O & O&  O
		\end{bmatrix}\right)\\
  &(\mbox{by Lemma} \ref{LEMA1.7} ~\mbox{with the identity} D_j=D_{n-j} )\\
  &=\max\left\{w_A\left( \sum_{i=1}^{n}T_{i}    \right), w_A(D_{\frac{n}{2}}) \right\}+ \rho_{\mathbb{A}}\left(\begin{bmatrix}
	O & O  & \cdots &   O & O& O\\
		O  & O  &\cdots&  O & O&w_A\left( D_{n-1}\right) \\
  O &  O &  \ddots & \adots  & \adots& O\\
   O &  O & & O  & \adots  & O\\
		\vdots & \vdots &\adots & \vdots & \ddots&\vdots \\
		O  &w_A\left( D_{1}\right)   &\cdots & O & O&  O
		\end{bmatrix}\right)\\
  &=\max\left\{w_A\left( \sum_{i=1}^{n}T_{i}    \right), w_A(D_{\frac{n}{2}}) \right\} + \left(\rho_{\mathbb{A}}\left(\begin{bmatrix}
		O & O  & \cdots &   O & O& O\\
		O  &  w_A^2\left( D_{n-1}\right)  &\cdots&  O & O&O \\
  O &  O & \ddots & \adots  & \adots& O\\
   O &  O & & O  & \adots  & O\\
		\vdots & \vdots &\adots & \vdots & \ddots&\vdots \\
		O  &O  &\cdots & O & O& w_A^2\left( D_{1}\right)
		\end{bmatrix}\right)\right)^\frac{1}{2}\\
  &~~~~~~~~~~~~~~~~~~~~~~~~~~~~~~~~~~~~~~~~~~~~~~~~~~~~~~~~~~~~~~~~~~~~~~~~~~~~~(\mbox{by Lemma} \ref{l87}~ \mbox{k=2} )
  \end{align*}
  \begin{align*}
   &=\max\left\{w_A\left( \sum_{i=1}^{n}T_{i}    \right), w_A(D_{\frac{n}{2}}) \right\} + \max\left\{w_A\left(D_{j}    \right)~j=1,2,\dots, n-1 ~\mbox{and}~ j\neq \frac{n}{2} \right\}.
		\end{align*}
\end{proof}

Motivated by the work of  \cite{KITSAT}, 
in the following theorem, we give a counterpart representation of the skew left circulant operator matrices.
\begin{theorem}\label{Thm3}
		Let $T_i\in \mathcal{B}_A(\mathcal{H})$ for $1\leq i\leq n$. Then
		$$ w_{\mathbb{A}}(\mathbb{T}_{slcirc})=w_{\mathbb{A}}\left(\begin{bmatrix}
			O & O  & \cdots &   O &\sum_{i=1}^n(\sigma\omega^0)^{i+1} T_i\\
		O  & O  &\cdots&  \sum_{i=1}^n(\sigma\omega)^{i+1} T_i &  O  \\
  O &  O &  \cdots & .... & O\\
		\vdots & \vdots &\adots & \vdots & \vdots \\
		\sum_{i=1}^n(\sigma\omega^{n-1})^{i+1} T_i &O  &\cdots & O &   O
		\end{bmatrix}\right)$$  
	\end{theorem}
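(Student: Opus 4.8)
The plan is to adapt the proof of Theorem \ref{Thm2} almost verbatim, replacing the ordinary Fourier $\mathbb{A}$-unitary by a twisted version suited to the skew-circulant structure. First I would invoke the factorization $\mathbb{T}_{slcirc}=\mbox{scirc}(T_n,\dots,T_1)J$ with $J=\mbox{off-diag}(I,\dots,I)$ recalled at the start of this section, so that $\mathbb{T}_{slcirc}$ differs from a genuine skew circulant matrix only by the column-reversing factor $J$. Because skew circulant matrices are diagonalized by the $n$-th roots of $-1$ rather than by the $n$-th roots of unity, I would set $\sigma=e^{i\pi/n}$ (so that $\sigma^{n}=-1$) and $\omega=e^{2\pi i/n}$, and record that $\sigma\omega^{0},\sigma\omega^{1},\dots,\sigma\omega^{n-1}$ are exactly the $n$ distinct $n$-th roots of $-1$, together with the orthogonality relations $\sum_{j=0}^{n-1}(\sigma\omega^{j})^{k}=0$ for the exponents $k$ that occur.

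Next I would introduce the twisted Fourier matrix $\mathbb{U}=\frac{1}{\sqrt{n}}\bigl[(\sigma\omega^{\,j-1})^{\,k-1}I\bigr]_{j,k=1}^{n}$, i.e.\ the Fourier matrix of Theorem \ref{Thm2} pre-multiplied by the diagonal twist $\mathrm{diag}(I,\sigma I,\dots,\sigma^{n-1}I)$, and compute $\mathbb{U}^{\#_{\mathbb{A}}}=\mathbb{A}^{\dagger}\mathbb{U}^{*}\mathbb{A}$. Since every scalar multiple $\lambda I$ appearing in $\mathbb{U}$ becomes $\bar{\lambda}P_{\overline{\mathcal{R}(A)}}$ in $\mathbb{U}^{\#_{\mathbb{A}}}$, the same orthogonality relations yield $\mathbb{U}\mathbb{U}^{\#_{\mathbb{A}}}=\mathbb{U}^{\#_{\mathbb{A}}}\mathbb{U}=\bigoplus_{i=1}^{n}P_{\overline{\mathcal{R}(A)}}$, so $\mathbb{U}$ is $\mathbb{A}$-unitary exactly as in Theorem \ref{Thm2}.

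Then comes the computational core. Using $w_{\mathbb{A}}(\mathbb{T}_{slcirc})=w_{\mathbb{A}}(\mathbb{T}_{slcirc}^{\#_{\mathbb{A}}})=w_{\mathbb{A}}(\mathbb{U}\mathbb{T}_{slcirc}^{\#_{\mathbb{A}}}\mathbb{U}^{\#_{\mathbb{A}}})$, Lemma \ref{lemma1} to pass $\#_{\mathbb{A}}$ through the block matrix, and $\mathcal{R}(T_i^{\#_A})\subseteq\overline{\mathcal{R}(A)}$ to absorb the projections, I would show that the skew-circulant factor is carried to a diagonal matrix with $j$-th block $\sum_{i=1}^{n}(\sigma\omega^{j})^{i+1}T_i^{\#_A}$, while the remaining factor $J$ reverses the order of these blocks, converting the diagonal into the anti-diagonal displayed in the statement. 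A final application of $w_{\mathbb{A}}(\cdot)=w_{\mathbb{A}}((\cdot)^{\#_{\mathbb{A}}})$ from \eqref{eqn_00000001.4} removes the outer $\#_{\mathbb{A}}$ and gives the asserted identity.

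The main obstacle I anticipate is pinning down the twist precisely enough that the diagonalized blocks emerge with the exact exponent $i+1$ and the anti-diagonal entries are exactly $\sum_{i=1}^{n}(\sigma\omega^{j})^{i+1}T_i$; this requires careful index bookkeeping through the combination of the $\sigma$-twist, the relabeling $(T_1,\dots,T_n)\mapsto(T_n,\dots,T_1)$, and the $J$-reversal. By contrast, the semi-Hilbertian adaptations --- the appearance of $P_{\overline{\mathcal{R}(A)}}$ in $\mathbb{U}^{\#_{\mathbb{A}}}$, the passage from $T_i$ to $T_i^{\#_A}$, and the $\mathbb{A}$-unitary invariance of $w_{\mathbb{A}}$ --- are routine and follow the template of Theorems \ref{Thm1} and \ref{Thm2} once the scalar identities are verified.
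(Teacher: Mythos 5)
Your proposal follows essentially the same route as the paper: conjugate $\mathbb{T}_{slcirc}^{\#_{\mathbb{A}}}$ by an $\mathbb{A}$-unitary Vandermonde-type matrix built from the $n$-th roots of $-1$ and read off the anti-diagonal form, using Lemma \ref{lemma1}, the inclusion $\mathcal{R}(T_i^{\#_A})\subseteq\overline{\mathcal{R}(A)}$, and the invariance of $w_{\mathbb{A}}$ under $\mathbb{A}$-unitary conjugation. The only difference is that the paper builds the extra row twist $(\sigma\omega^{j-1})^{j-1}$ directly into $\mathbb{U}$ (its $(j,k)$ entry is $(\sigma\omega^{j-1})^{j+k-2}I$) so that the conjugation lands exactly on the stated anti-diagonal in one step, whereas your plainer twisted Fourier matrix forces you to track the unimodular scalars produced by conjugating $J$ --- precisely the bookkeeping you flag --- but this is a cosmetic rather than substantive divergence.
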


	\begin{proof}
		The $n$ roots of the equation $z^n=-1$ are $\sigma, \sigma \omega, \sigma \omega^2, \dots, \sigma \omega^{n-1}$, where $\sigma=e^{\frac{\pi i}{n}}$ and $\omega=e^{\frac{2\pi i}{n}}$.
		Let $\mathbb{T}_{slcirc}=\begin{bmatrix}
		T_{1} & T_{2} & \cdots & T_{n-1}   &   T_{n}\\
		T_{2}  &  \cdots &  T_{n-1}& T_{n} & -T_{1}\\
		\vdots & \adots &\adots & \adots & \vdots \\
  T_{n-1}  & T_{n} &  -T_{1}&\cdots  &  -T_{n-2}\\
		T_{n}  & -T_{1}  &\cdots&  -T_{n-2} & -T_{n-1}
		\end{bmatrix}$ 
		and
		\begin{align*}
		\mathbb{U}=\frac{1}{\sqrt{n}}\begin{bmatrix}
		I & \sigma I&\sigma^2 I  & \cdots  &  \sigma^{n-1} I\\
		(\sigma \omega)I & (\sigma \omega)^2 I& (\sigma \omega)^3I  & \cdots  &   (\sigma \omega)^nI\\
		\vdots & \vdots &\vdots & \vdots &\vdots\\
		(\sigma \omega^{n-2})^{n-2}I &  (\sigma \omega^{n-2})^{n-1}I&  (\sigma \omega^{n-2})^nI  & \cdots  &  (\sigma \omega^{n-2})^{2n-1}I\\
		(\sigma \omega^{n-1})^{n-1}I &  (\sigma \omega^{n-1})^{n}I&  (\sigma \omega^{n-1})^{n+1}I  & \cdots  &   (\sigma \omega^{n-1})^{2n-2}I
		\end{bmatrix}.
		\end{align*}
 
		Using a similar argument as used in the Theorem \ref{Thm1}, we can show that $\mathbb{U}$ is $\mathbb{A}$-unitary.
		Now, using Lemma \ref{lemma1},  and the property $w_{\mathbb{A}}(\mathbb{T})=w_{\mathbb{A}}(\mathbb{UTU}^{\#_{\mathbb{A}}})$ for any $\mathbb{T}\in\mathcal{B}_A(\mathcal{H})$,
		we get
		\begin{align*}
		w_{\mathbb{A}}(\mathbb{T}_{slcirc})&=w_{\mathbb{A}}(\mathbb{T}_{slcirc}^{\#_{\mathbb{A}}})=w_{\mathbb{A}}(\mathbb{UT}_{slcirc}^{\#_{\mathbb{A}}}\mathbb{U}^{\#_{\mathbb{A}}})\\
 & =w_{\mathbb{A}}\left(\begin{bmatrix}
		O & O  & \cdots &   O &\sum_{i=1}^n(\overline{\sigma\omega^{n-1}})^{i+1} T_i^{\#_A}\\
		O  & O  &\cdots&  \sum_{i=1}^n(\overline{\sigma\omega^{n-2}})^{i+1} T_i^{\#_A} &  O  \\
  O &  O &  \cdots & .... & O\\
		\vdots & \vdots &\adots & \vdots & \vdots \\
		\sum_{i=1}^n(\overline{\sigma\omega^0})^{i+1} T_i^{\#_A} &O  &\cdots & O &   O
		\end{bmatrix}\right)\\
  \end{align*}
  \begin{align*}
  &=w_{\mathbb{A}}\left(\begin{bmatrix}
		O & O  & \cdots &   O &\sum_{i=1}^n(\sigma\omega^0)^{i+1} T_i\\
		O  & O  &\cdots&  \sum_{i=1}^n(\sigma\omega)^{i+1} T_i &  O  \\
  O &  O &  \cdots & .... & O\\
		\vdots & \vdots &\adots & \vdots & \vdots \\
		\sum_{i=1}^n(\sigma\omega^{n-1})^{i+1} T_i &O  &\cdots & O &   O
		\end{bmatrix}^{\#_{\mathbb{A}}}\right)\\
  &=w_{\mathbb{A}}\left(\begin{bmatrix}
			O & O  & \cdots &   O &\sum_{i=1}^n(\sigma\omega^0)^{i+1} T_i\\
		O  & O  &\cdots&  \sum_{i=1}^n(\sigma\omega)^{i+1} T_i &  O  \\
  O &  O &  \cdots & .... & O\\
		\vdots & \vdots &\adots & \vdots & \vdots \\
		\sum_{i=1}^n(\sigma\omega^{n-1})^{i+1} T_i &O  &\cdots & O &   O
		\end{bmatrix}\right).
		\end{align*}
	\end{proof}
	\begin{cor}
	  Let $ T_i \in \mathcal{B}_A(\mathcal{H}),~ i= 1, 2,\ldots, n $. 
     If $n$ is even, then
     \begin{align*}
            w_{\mathbb{A}}(\mathbb{T}_{slcirc})
            \leq \frac{n}{2} \displaystyle\sum_{i=1}^{n} \|T_i \|_A,
        \end{align*}
        and if $n$ is odd, then
 \begin{align*}
   w_{\mathbb{A}}(\mathbb{T}_{slcirc})
   \leq \sum_{i=1}^n w_A( T_i)+\frac{n-1}{2} \displaystyle\sum_{i=1}^{n} \|T_i\|_A .  
 \end{align*}
	\end{cor}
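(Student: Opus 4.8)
The plan is to feed the anti-diagonal representation supplied by Theorem \ref{Thm3} into the anti-diagonal bound of Lemma \ref{thm3.4}. By Theorem \ref{Thm3}, $w_{\mathbb{A}}(\mathbb{T}_{slcirc})$ coincides with the $\mathbb{A}$-numerical radius of the operator matrix whose only nonzero entries lie on the anti-diagonal, the entry in row $k$ being $A_k := \sum_{i=1}^n (\sigma\omega^{k-1})^{i+1} T_i$ for $1\leq k\leq n$. This matrix is exactly of the shape handled by Lemma \ref{thm3.4} (with $A_1$ in the top-right corner and $A_n$ in the bottom-left corner), so I would apply that lemma verbatim to the blocks $A_1,\dots,A_n$.

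First I would isolate the scalar fact that makes the estimate collapse: since $\sigma=e^{\pi i/n}$ and $\omega=e^{2\pi i/n}$ both have modulus one, every coefficient satisfies $|(\sigma\omega^{k-1})^{i+1}|=1$. Using the triangle inequality for the $A$-operator seminorm together with $\|\lambda T\|_A=|\lambda|\,\|T\|_A$, each block then obeys
$$\|A_k\|_A=\left\|\sum_{i=1}^n (\sigma\omega^{k-1})^{i+1} T_i\right\|_A\leq \sum_{i=1}^n \|T_i\|_A,$$
and, by the subadditivity and positive homogeneity of the $A$-numerical radius, $w_A(A_k)\leq \sum_{i=1}^n w_A(T_i)$. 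The crucial feature is that both upper bounds are independent of $k$.

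For $n$ even, Lemma \ref{thm3.4} gives $w_{\mathbb{A}}(\mathbb{T}_{slcirc})\leq \tfrac12\sum_{k=1}^n\|A_k\|_A$; substituting the $k$-independent bound above and summing over the $n$ values of $k$ yields $\tfrac{n}{2}\sum_{i=1}^n\|T_i\|_A$, as claimed. For $n$ odd, Lemma \ref{thm3.4} gives $w_{\mathbb{A}}(\mathbb{T}_{slcirc})\leq w_A\big(A_{\frac{n+1}{2}}\big)+\tfrac12\sum_{k\neq\frac{n+1}{2}}\|A_k\|_A$; bounding the single numerical-radius term by $\sum_{i=1}^n w_A(T_i)$ and each of the remaining $n-1$ norm terms by $\sum_{i=1}^n\|T_i\|_A$ produces $\sum_{i=1}^n w_A(T_i)+\tfrac{n-1}{2}\sum_{i=1}^n\|T_i\|_A$, the stated inequality.

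I expect no genuine obstacle here beyond bookkeeping, since the heavy lifting is done by Theorem \ref{Thm3} and Lemma \ref{thm3.4}. The one point meriting a moment of care is matching the indexing of the matrix in Theorem \ref{Thm3} with that of Lemma \ref{thm3.4}, so that for odd $n$ the distinguished middle block is genuinely $A_{\frac{n+1}{2}}=\sum_{i=1}^n(\sigma\omega^{(n-1)/2})^{i+1}T_i$; this is immediate from the displayed matrix, as the anti-diagonal entry $A_k$ sits in row $k$.
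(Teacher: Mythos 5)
Your proposal is correct and follows essentially the same route as the paper: the authors likewise feed the anti-diagonal form from Theorem \ref{Thm3} into Lemma \ref{thm3.4} and then bound each block by $\sum_{i=1}^n\|T_i\|_A$ (resp.\ the middle block by $\sum_{i=1}^n w_A(T_i)$) using the unimodularity of $\sigma\omega^{k-1}$ together with subadditivity. Your indexing check, identifying the distinguished middle block as $\sum_{i=1}^n(\sigma\omega^{(n-1)/2})^{i+1}T_i$, matches the paper exactly.
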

 \begin{proof}
        Using Lemma \ref{thm3.4}, and  Theorem \ref{Thm3}, we see that
        if $n$ is even, then
        \begin{align*}
            w_{\mathbb{A}}(\mathbb{T}_{slcirc})&\leq \frac{1}{2} \displaystyle\sum_{j=0}^{n-1} \|\sum_{i=1}^n(\sigma\omega^{j})^{i+1} T_i \|_A\\
            &\leq \frac{n}{2} \displaystyle\sum_{i=1}^{n} \|T_i \|_A,
        \end{align*}

 and if $n$ is odd, then
 \begin{align*}
   w_{\mathbb{A}}(\mathbb{T}_{slcirc})&\leq  w_A\bigg(\sum_{i=1}^n(\sigma\omega^\frac{n-1}{2})^{i+1} T_i\bigg)+\frac{n-1}{2} \displaystyle\sum_{i=1}^{n} \|T_i\|_A\\
   &\leq \sum_{i=1}^n w_A( T_i)+\frac{n-1}{2} \displaystyle\sum_{i=1}^{n} \|T_i\|_A .  
 \end{align*}

 \end{proof}

\begin{rem}
    If $T_i\in \mathcal{B}_A(\mathcal{H})$ for $1\leq i\leq n$, then $w_A(slcirc(T_1, \dots, T_n))=w_A(slcirc(T_n, \dots, T_1))$.
\end{rem}

    \begin{remark}\label{Remark3.4}
From \cite[Theorem 2.2]{KITSAT}, for $T, S\in \mathcal{B}_A(\mathcal{H})$, $ \sigma=e^{\frac{\pi i}{n}}$ and for skew circulant operator matrices, we have  \\
       $  w_{\mathbb{A}}\left(\begin{bmatrix}
		T & \sigma T &  & \cdots&  & ..&  \sigma^{n-1}T\\
		-\sigma^{n-1}T  & T& & & &T & T\\
		\vdots  &  &\ddots &&\adots  & & \vdots\\
  	  &  &\adots& &\ddots  & &  \\
		. &.&  & && & \sigma T \\
		-\sigma T  & .  & &\cdots&  & -\sigma^{n-1} T & T
		\end{bmatrix}_{n\times n}\right)=nw_A(T).$ 
      \end{remark}
      which yields 
      \begin{align*}
          w_A(slcirc(T, \sigma T, \dots, \sigma^{n-1}T))&=w_A(slcirc(\sigma^{n-1} T, \dots, \sigma T,  T))\\
         &=w_A(scirc( T, \sigma T,\dots,   \sigma^{n-1}T)J)\\
         &\leq w_A(scirc( T, \sigma T,\dots,   \sigma^{n-1}T)).
      \end{align*}
      In general, they are not equal. However, equality can hold in some cases; for example, when the operator is a nilpotent operator of index $2$.
      
       \section{$ \mathbb{A}$-numerical radii of certain imaginary left circulant and imaginary skew left circulant operator matrices}
 \begin{prop}\label{p8}
		Let $T_i\in \mathcal{B}_A(\mathcal{H})$ for $1\leq i\leq 2$. Then
		$$ w_{\mathbb{A}}\left(\begin{bmatrix}
			T_1 & T_2\\
			T_2 & iT_1
			\end{bmatrix}\right)\leq w_A(T_1)+w_A(T_2).$$
	\end{prop}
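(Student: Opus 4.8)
The plan is to split the block operator matrix additively and then invoke the triangle inequality for $w_{\mathbb{A}}$ together with two parts of Lemma~\ref{lem0001}. First I would write the decomposition
\[
\begin{bmatrix} T_1 & T_2\\ T_2 & iT_1 \end{bmatrix}
= \begin{bmatrix} T_1 & O\\ O & iT_1 \end{bmatrix}
+ \begin{bmatrix} O & T_2\\ T_2 & O \end{bmatrix}.
\]
Both summands lie in $\mathcal{B}_{\mathbb{A}}(\mathbb{H})$, since $T_1,T_2$ (and hence $iT_1$) lie in $\mathcal{B}_A(\mathcal{H})$ and Lemma~\ref{lemma1} guarantees the block matrices admit $\mathbb{A}$-adjoints. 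Because $w_{\mathbb{A}}(\cdot)$ is a seminorm on $\mathcal{B}_{\mathbb{A}}(\mathbb{H})$, it is subadditive, so applying the triangle inequality to this decomposition reduces the problem to estimating the $\mathbb{A}$-numerical radius of each summand separately.

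For the block-diagonal summand, Lemma~\ref{lem0001}(i) gives
\[
w_{\mathbb{A}}\left(\begin{bmatrix} T_1 & O\\ O & iT_1 \end{bmatrix}\right)=\max\{w_A(T_1),\,w_A(iT_1)\}.
\]
Here I would use the scalar homogeneity $w_A(\lambda T)=|\lambda|\,w_A(T)$, which is immediate from the definition since $|\langle \lambda Tx,x\rangle_A|=|\lambda|\,|\langle Tx,x\rangle_A|$; with $\lambda=i$ this yields $w_A(iT_1)=w_A(T_1)$, so the whole term collapses to $w_A(T_1)$. For the off-diagonal summand, the ``in particular'' clause of Lemma~\ref{lem0001}(iv) gives exactly
\[
w_{\mathbb{A}}\left(\begin{bmatrix} O & T_2\\ T_2 & O \end{bmatrix}\right)=w_A(T_2).
\]
Combining these two evaluations with the subadditivity bound produces $w_{\mathbb{A}}\big(\big[\begin{smallmatrix} T_1 & T_2\\ T_2 & iT_1\end{smallmatrix}\big]\big)\le w_A(T_1)+w_A(T_2)$, which is the claim.

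I do not expect a serious obstacle here; the argument is essentially a one-line decomposition followed by two citations of Lemma~\ref{lem0001}. The only points requiring a brief justification are the subadditivity of $w_{\mathbb{A}}$ (which holds because it is a seminorm, consistent with \eqref{ineq1}) and the complex scalar homogeneity $w_A(iT_1)=w_A(T_1)$. If anything, the mild subtlety is recognizing that the presence of the factor $i$ on the $(2,2)$-entry is harmless: it affects only the diagonal term, where modulus-one scaling leaves the $A$-numerical radius unchanged, so the bound is identical to the one that would arise from the matrix with $T_1$ in that slot.
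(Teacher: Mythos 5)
Your argument is correct and complete: the splitting
$\left[\begin{smallmatrix} T_1 & T_2\\ T_2 & iT_1\end{smallmatrix}\right]
=\left[\begin{smallmatrix} T_1 & O\\ O & iT_1\end{smallmatrix}\right]
+\left[\begin{smallmatrix} O & T_2\\ T_2 & O\end{smallmatrix}\right]$,
the subadditivity of $w_{\mathbb{A}}$, the homogeneity $w_A(iT_1)=w_A(T_1)$, and Lemma~\ref{lem0001}(i) and (iv) together give exactly the claimed bound, and each citation applies (in particular $iT_1\in\mathcal{B}_A(\mathcal{H})$ since $\mathcal{B}_A(\mathcal{H})$ is a subalgebra). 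Note that the paper states Proposition~\ref{p8} without any proof, so there is no written argument to compare against; yours is the natural one and fills that gap. It is also worth observing that in the $3\times 3$ and $4\times 4$ analogues (Propositions~\ref{p9} and~\ref{p10}) the authors first conjugate by an $\mathbb{A}$-unitary $\mathbb{V}$ before decomposing and applying the triangle inequality; your direct decomposition shows that this extra conjugation step is unnecessary in the $2\times 2$ case, which makes your route slightly more elementary than the pattern the paper follows for the larger matrices.
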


\begin{cor}
    Let $T_i\in \mathcal{B}_A(\mathcal{H})$ for $1\leq i\leq 2$. Then
		$$ w_{\mathbb{A}}\left(\begin{bmatrix}
			T_1 & T_2\\
			T_2 & -iT_1
			\end{bmatrix}\right)\leq w_A(T_1)+w_A(T_2).$$
\end{cor}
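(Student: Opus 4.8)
The plan is to deduce this corollary from Proposition \ref{p8} by passing to the $\#_{\mathbb{A}}$-adjoint of the given operator matrix. The crucial point is that the reduced-solution adjoint $\#_A$ is conjugate-linear in scalars, so the entry $-iT_1$ is converted into $+iT_1$ upon taking adjoints; this puts the matrix into exactly the shape to which Proposition \ref{p8} applies.

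First I would record the scalar rule $(\lambda T)^{\#_A}=\bar\lambda\,T^{\#_A}$ for $\lambda\in\mathbb{C}$ and $T\in\mathcal{B}_A(\mathcal{H})$, which follows immediately from the defining relation $AX=T^*A$: if $AX=T^*A$, then $A(\bar\lambda X)=\bar\lambda T^*A=(\lambda T)^*A$ and $\mathcal{R}(\bar\lambda X)\subseteq\overline{\mathcal{R}(A)}$, so $\bar\lambda X$ is the reduced solution associated with $\lambda T$. In particular $(-iT_1)^{\#_A}=i\,T_1^{\#_A}$. Applying the block-adjoint formula of Lemma \ref{lemma1} then yields
\[
\begin{bmatrix} T_1 & T_2\\ T_2 & -iT_1\end{bmatrix}^{\#_{\mathbb{A}}}
=\begin{bmatrix} T_1^{\#_A} & T_2^{\#_A}\\ T_2^{\#_A} & i\,T_1^{\#_A}\end{bmatrix}.
\]

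Next, since $\mathcal{B}_A(\mathcal{H})$ is closed under $\#_A$, we have $T_1^{\#_A},T_2^{\#_A}\in\mathcal{B}_A(\mathcal{H})$, so Proposition \ref{p8} applies to the right-hand matrix with $T_1^{\#_A}$ and $T_2^{\#_A}$ playing the roles of $T_1$ and $T_2$, producing the bound $w_A(T_1^{\#_A})+w_A(T_2^{\#_A})$. Finally I would invoke the identity $w_{\mathbb{A}}(\mathbb{T})=w_{\mathbb{A}}(\mathbb{T}^{\#_{\mathbb{A}}})$ (the block-level instance of \eqref{eqn_00000001.4}, used repeatedly above) to replace the adjoint matrix by the original one on the left, together with $w_A(T_i^{\#_A})=w_A(T_i)$ from \eqref{eqn_00000001.4} to simplify the right-hand side. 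Chaining these steps gives
\[
w_{\mathbb{A}}\!\left(\begin{bmatrix} T_1 & T_2\\ T_2 & -iT_1\end{bmatrix}\right)
=w_{\mathbb{A}}\!\left(\begin{bmatrix} T_1^{\#_A} & T_2^{\#_A}\\ T_2^{\#_A} & i\,T_1^{\#_A}\end{bmatrix}\right)
\leq w_A(T_1^{\#_A})+w_A(T_2^{\#_A})=w_A(T_1)+w_A(T_2),
\]
which is the assertion.

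There is essentially no genuine obstacle here, since the corollary is a one-line consequence of the proposition. The only point requiring care is the sign bookkeeping: one must use the conjugate-linearity of $\#_A$ to see that $-i$ becomes $+i$, and it is precisely this flip that makes Proposition \ref{p8} directly applicable rather than merely suggestive. Everything else is routine application of the adjoint-invariance of the $\mathbb{A}$-numerical radius.
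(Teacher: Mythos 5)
Your proof is correct. The scalar rule $(\lambda T)^{\#_A}=\bar\lambda\,T^{\#_A}$ is immediate from $T^{\#_A}=A^\dagger T^*A$, Lemma \ref{lemma1} then gives the block adjoint you wrote (the off-diagonal entries are untouched because the matrix is symmetric in positions $(1,2)$ and $(2,1)$), $T_i^{\#_A}\in\mathcal{B}_A(\mathcal{H})$ so Proposition \ref{p8} applies to the adjoint matrix, and \eqref{eqn_00000001.4} at both the block level and the entry level closes the argument. The paper itself gives no proof of this corollary; judging from the remarks preceding the analogous corollaries after Propositions \ref{p9} and \ref{p10} (``using the arguments used in the proof of Proposition \ref{p9}\dots''), the intended route is to re-run the unitary-conjugation proof of the proposition with the sign-flipped $\mathbb{A}$-unitary $\mathbb{V}=\tfrac{1}{\sqrt2}\bigl[\begin{smallmatrix}-iI & I\\ I & -iI\end{smallmatrix}\bigr]$ in place of $\tfrac{1}{\sqrt2}\bigl[\begin{smallmatrix}iI & I\\ I & iI\end{smallmatrix}\bigr]$. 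Your reduction is a genuinely different and arguably preferable route: it treats the corollary as a formal consequence of the proposition's \emph{statement} via the conjugate-linearity of $\#_A$, so no part of the proposition's proof needs to be repeated, and the same one-line adjoint trick would dispatch the later $-i$ corollaries as well. What the paper's route buys in exchange is independence from the proposition: redoing the conjugation works even if one wants a self-contained derivation or a variant where the adjoint does not land exactly on the $+i$ configuration.
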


 \begin{prop}\label{p9}
		Let $T_i\in \mathcal{B}_A(\mathcal{H})$ for $1\leq i\leq 3$. Then
		\begin{align*}
	w_{\mathbb{A}}\left(\begin{bmatrix}
			T_1 & T_2& T_3\\
			T_2 & T_3&iT_1\\
   T_3&iT_1&iT_2
			\end{bmatrix}\right)&\leq   w_A(T_3)+\max\{ w_A(T_1),  w_A(T_2)\}+ \sqrt{2}\left(\| T_1-T_2\|_A+\| T_1+T_2\|_A\right).
 \end{align*} 
  
	\end{prop}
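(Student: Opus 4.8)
The plan is to bound $w_{\mathbb A}$ of the $3\times 3$ imaginary left circulant matrix by splitting it into three structured summands and invoking subadditivity of the $\mathbb A$-numerical radius (the same device used in the Corollary following Theorem \ref{Thm2}, and in the spirit of the decomposition behind Proposition \ref{p8}). Concretely, I would write
$$\begin{bmatrix} T_1 & T_2 & T_3 \\ T_2 & T_3 & iT_1 \\ T_3 & iT_1 & iT_2\end{bmatrix} = \underbrace{\begin{bmatrix} O & O & T_3 \\ O & T_3 & O \\ T_3 & O & O\end{bmatrix}}_{\mathbb{T}_3} + \underbrace{\begin{bmatrix} T_1 & O & O \\ O & O & O \\ O & O & iT_2\end{bmatrix}}_{\mathbb{D}} + \underbrace{\begin{bmatrix} O & T_2 & O \\ T_2 & O & iT_1 \\ O & iT_1 & O\end{bmatrix}}_{B},$$
so that $w_{\mathbb A}(\mathbb T)\le w_{\mathbb A}(\mathbb T_3)+w_{\mathbb A}(\mathbb D)+w_{\mathbb A}(B)$. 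The three summands are chosen precisely so as to reproduce the three terms $w_A(T_3)$, $\max\{w_A(T_1),w_A(T_2)\}$, and the norm term of the statement.

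The two structured pieces are evaluated exactly. Since $\mathbb D$ is block diagonal, Lemma \ref{them3.7} together with the homogeneity $w_A(iT_2)=w_A(T_2)$ gives $w_{\mathbb A}(\mathbb D)=\max\{w_A(T_1),w_A(T_2)\}$. For $\mathbb T_3$ the middle coordinate decouples, so conjugating by the permutation $\mathbb A$-unitary that reorders the coordinates as $(1,3,2)$ — built exactly as in the proof of Theorem \ref{Thm1} — turns $\mathbb T_3$ into $\begin{bmatrix} O & T_3 \\ T_3 & O\end{bmatrix}\oplus T_3$; then Lemma \ref{them3.7} and Lemma \ref{lem0001}(iv) yield $w_{\mathbb A}(\mathbb T_3)=\max\{w_A(T_3),w_A(T_3)\}=w_A(T_3)$. (Alternatively this is read off directly from Theorem \ref{Thm1}(iii) with $n=3$.)

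For the residual $B$ I would split once more into its two off-diagonal pairs, $B=\begin{bmatrix} O & T_2 & O \\ T_2 & O & O \\ O & O & O\end{bmatrix}+\begin{bmatrix} O & O & O \\ O & O & iT_1 \\ O & iT_1 & O\end{bmatrix}$. In each summand the remaining coordinate is a zero block, so each reduces to a $2\times 2$ off-diagonal block; by Lemma \ref{lem0001}(iv) and homogeneity their $\mathbb A$-numerical radii are $w_A(T_2)$ and $w_A(T_1)$, whence $w_{\mathbb A}(B)\le w_A(T_1)+w_A(T_2)\le \|T_1\|_A+\|T_2\|_A$. Finally, writing $T_1=\tfrac12\big((T_1+T_2)+(T_1-T_2)\big)$ and $T_2=\tfrac12\big((T_1+T_2)-(T_1-T_2)\big)$ and applying the triangle inequality for $\|\cdot\|_A$ gives $\|T_1\|_A+\|T_2\|_A\le \|T_1+T_2\|_A+\|T_1-T_2\|_A\le \sqrt2\big(\|T_1-T_2\|_A+\|T_1+T_2\|_A\big)$. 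Adding the three estimates produces the claimed inequality.

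Each individual step is routine; the real care lies in two places. First, one must confirm that the coordinate-reordering operator is a genuine $\mathbb A$-unitary, i.e. that it carries the projections $P_{\overline{\mathcal R(A)}}$ exactly as verified in Theorem \ref{Thm1}, so that $w_{\mathbb A}$ is preserved under the conjugation. Second, one should notice that the displayed right-hand side is a deliberately relaxed form of the sharper residual estimate $w_A(T_1)+w_A(T_2)$; the closing chain of elementary norm inequalities is exactly what converts the natural bound into the stated one carrying the factor $\sqrt2$ and the combinations $T_1\pm T_2$. I expect the bulk of the write-up to be spent presenting these structured evaluations cleanly rather than on any single hard inequality.
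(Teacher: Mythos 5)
Your proof is correct, and it takes a genuinely different route from the paper. The paper first conjugates by the $\mathbb{A}$-unitary $\mathbb{V}=\tfrac{1}{\sqrt{2}}\left[\begin{smallmatrix} iI&O&I\\ O&\sqrt{2}I&O\\ I&O&iI\end{smallmatrix}\right]$, then splits the conjugated matrix into four pieces, two of which are nilpotent of index $2$ so that the equality $w_{\mathbb{A}}=\tfrac12\|\cdot\|_{\mathbb{A}}$ from \eqref{eq1.5} converts them into the norm terms; this is what produces the combinations $T_1\pm T_2$ intrinsically, and in fact the paper's own final display carries the sharper constant $\tfrac{1}{\sqrt{2}}$ rather than the $\sqrt{2}$ printed in the statement. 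You instead decompose the original matrix directly into an anti-diagonal piece, a diagonal piece, and a residual $B$, evaluate each with Lemma \ref{them3.7} and Lemma \ref{lem0001}(iv) (plus homogeneity $w_A(iT)=w_A(T)$), and only at the very end convert the natural residual bound $w_A(T_1)+w_A(T_2)$ into the norm expression via $\|T_1\|_A+\|T_2\|_A\le\|T_1+T_2\|_A+\|T_1-T_2\|_A$. Your argument is more elementary — no $\mathbb{A}$-unitary conjugation is needed, and all the structured pieces are evaluated exactly — and your intermediate bound $w_A(T_3)+\max\{w_A(T_1),w_A(T_2)\}+w_A(T_1)+w_A(T_2)$ is incomparable with the paper's (sometimes better, e.g.\ when $T_2=O$, sometimes worse, e.g.\ when $T_1=T_2$ is $A$-selfadjoint); but it only reaches the stated inequality with constant $\sqrt{2}$, whereas the paper's conjugation method actually establishes the stronger $\tfrac{1}{\sqrt{2}}$ version that the subsequent corollary quotes. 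Since the task was to prove the statement as printed, your proof suffices.
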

 \begin{proof}
   Let $\mathbb{V}=\frac{1}{\sqrt{2}}\begin{bmatrix}
	iI&O & I\\
 O& \sqrt{2}I&O\\
	I & O&iI
	 \end{bmatrix}$.  
 Now, using the fact that $w_A(U^{\#_A}TU)=w_A(T)$ and the triangle inequality for the $\mathbb{A}$-numerical radius, we have
\begin{align*}
   & w_{\mathbb{A}}\left(\begin{bmatrix}
			T_1 & T_2& T_3\\
			T_2 & T_3&iT_1\\
   T_3&iT_1&iT_2
			\end{bmatrix}\right)=   w_{\mathbb{A}}\left(\mathbb{V}^{\#_{\mathbb{A}}}\begin{bmatrix}
			T_1 & T_2& T_3\\
			T_2 & T_3&iT_1\\
   T_3&iT_1&iT_2
			\end{bmatrix}^{\#_{\mathbb{A}}}\mathbb{V}\right)\\
 &=\frac{1}{2}  w_{\mathbb{A}}\left(\begin{bmatrix}
	T_1^{\#_A}-iT_2^{\#_A}&-i\sqrt{2}(T_1^{\#_A}+T_2^{\#_A})&2T_3^{\#_A}-iT_1^{\#_A}+T_2^{\#_A}\\
 i\sqrt{2}(T_2^{\#_A}-T_1^{\#_A})&2T_3^{\#_A}&\sqrt{2}(T_1^{\#_A}+T_2^{\#_A})\\
 2T_3^{\#_A}-(-iT_1^{\#_A}+T_2^{\#_A})&\sqrt{2}(T_2^{\#_A}-T_1^{\#_A})&T_1^{\#_A}-iT_2^{\#_A}
	\end{bmatrix}\right)\\
   &\leq w_{\mathbb{A}}\left(\begin{bmatrix}
	O&O&T_3\\
 O&T_3&O\\
 T_3&O&O
	\end{bmatrix}^{\#_{\mathbb{A}}}\right)+ \frac{1}{2}w_{\mathbb{A}}\left(\begin{bmatrix}
	T_1+iT_2&O&-(iT_1+T_2)\\
 O&O&O\\
 iT_1+T_2&O&T_1+iT_2
	\end{bmatrix}^{\#_{\mathbb{A}}}\right)\\
 &+\frac{1}{\sqrt{2}} w_{\mathbb{A}}\left(\begin{bmatrix}
	O&-i(T_2-T_1)&O\\
 i(T_1+T_2)&O&O\\
 O&O&O
	\end{bmatrix}^{\#_{\mathbb{A}}}\right)+ \frac{1}{\sqrt{2}}w_{\mathbb{A}}\left(\begin{bmatrix}
	O&O&O\\
 O&O&(T_2-T_1)\\
 O&T_1+T_2&O
	\end{bmatrix}^{\#_{\mathbb{A}}}\right)\\
 &= w_{\mathbb{A}}\left(\begin{bmatrix}
	O&O&T_3\\
 O&T_3&O\\
 T_3&O&O
	\end{bmatrix}\right)+ \frac{1}{2}w_{\mathbb{A}}\left(\begin{bmatrix}
	T_1+iT_2&O&-(iT_1+T_2)\\
 O&O&O\\
 iT_1+T_2&O&T_1+iT_2
	\end{bmatrix}\right)\\
 &+\frac{1}{\sqrt{2}} w_{\mathbb{A}}\left(\begin{bmatrix}
	O&-i(T_2-T_1)&O\\
 i(T_1+T_2)&O&O\\
 O&O&O
	\end{bmatrix}\right)+ \frac{1}{\sqrt{2}}w_{\mathbb{A}}\left(\begin{bmatrix}
	O&O&O\\
 O&O&(T_2-T_1)\\
 O&T_1+T_2&O
	\end{bmatrix}\right)\\
 &\leq  w_A(T_3)+\max\{ w_A(T_1),  w_A(T_2)\}+\frac{1}{\sqrt{2}}w_{\mathbb{A}}\left(\begin{bmatrix}
	O&-i(T_2-T_1)&O\\
 O&O&O\\
 O&O&O
	\end{bmatrix}\right)+\frac{1}{\sqrt{2}}w_{\mathbb{A}}\left(\begin{bmatrix}
	O&O&O\\
 i(T_1+T_2)&O&O\\
 O&O&O
	\end{bmatrix}\right)\\
 &+ \frac{1}{\sqrt{2}}w_{\mathbb{A}}\left(\begin{bmatrix}
	O&O&O\\
 O&O&(T_2-T_1)\\
 O&O&O
	\end{bmatrix}\right)+\frac{1}{\sqrt{2}}w_{\mathbb{A}}\left(\begin{bmatrix}
	O&O&O\\
 O&O&O\\
 O&T_1+T_2&O
	\end{bmatrix}\right)\\
 &=  w_A(T_3)+\max\{ w_A(T_1),  w_A(T_2)\}+ \frac{1}{\sqrt{2}}\left(\| T_1-T_2\|_A+\| T_1+T_2\|_A\right),
\end{align*}
 where the last equality follows by \eqref{eq1.5}.
 \end{proof}

 Again, using the arguments used in the proof of Proposition~\ref{p9}, we have the following.

\begin{cor}
    Let $T_i\in \mathcal{B}_A(\mathcal{H})$ for $1\leq i\leq 3$. Then
		\begin{align*}
	w_{\mathbb{A}}\left(\begin{bmatrix}
			T_1 & T_2& T_3\\
			T_2 & T_3&-iT_1\\
   T_3&-iT_1&-iT_2
			\end{bmatrix}\right)&\leq   w_A(T_3)+\max\{ w_A(T_1),  w_A(T_2)\}+ \frac{1}{\sqrt{2}}\left(\| T_1-T_2\|_A+\| T_1+T_2\|_A\right).
 \end{align*} 
\end{cor}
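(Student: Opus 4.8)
The plan is to deduce the claim from Proposition~\ref{p9} rather than to reprove it from scratch. Write $\mathbb{M}_-$ for the matrix in the statement and let $\mathbb{M}_+$ denote the matrix appearing in Proposition~\ref{p9}. The key observation is that passing to the $\mathbb{A}$-adjoint converts $\mathbb{M}_-$ into a matrix of exactly the shape $\mathbb{M}_+$, but with the three building blocks $T_j$ replaced by their $A$-adjoints $T_j^{\#_A}$.

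First I would compute $\mathbb{M}_-^{\#_{\mathbb{A}}}$ using Lemma~\ref{lemma1}, which says that the $\#_{\mathbb{A}}$ of a block matrix is its block transpose with every entry replaced by its $A$-adjoint. Since $(-iT)^{\#_A}=i\,T^{\#_A}$, an entrywise check gives
\begin{align*}
\mathbb{M}_-^{\#_{\mathbb{A}}}=\begin{bmatrix}
T_1^{\#_A} & T_2^{\#_A} & T_3^{\#_A}\\
T_2^{\#_A} & T_3^{\#_A} & iT_1^{\#_A}\\
T_3^{\#_A} & iT_1^{\#_A} & iT_2^{\#_A}
\end{bmatrix},
\end{align*}
which is precisely the matrix of Proposition~\ref{p9} built from the operators $S_j:=T_j^{\#_A}$. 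These lie in $\mathcal{B}_A(\mathcal{H})$ because $T_j\in\mathcal{B}_A(\mathcal{H})$ forces $T_j^{\#_A}\in\mathcal{B}_A(\mathcal{H})$, as recorded in the preliminaries.

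Next I would use $w_{\mathbb{A}}(\mathbb{M}_-)=w_{\mathbb{A}}(\mathbb{M}_-^{\#_{\mathbb{A}}})$ from \eqref{eqn_00000001.4} and apply Proposition~\ref{p9} to $S_1,S_2,S_3$, obtaining the bound $w_A(S_3)+\max\{w_A(S_1),w_A(S_2)\}+\tfrac{1}{\sqrt{2}}\bigl(\|S_1-S_2\|_A+\|S_1+S_2\|_A\bigr)$. To finish I would rewrite each term in terms of the original $T_j$: by \eqref{eqn_00000001.4} one has $w_A(T_j^{\#_A})=w_A(T_j)$, and by additivity of the $A$-adjoint together with the norm identity $\|T^{\#_A}\|_A=\|T\|_A$ from \eqref{ineq0} one gets $\|S_1\pm S_2\|_A=\|(T_1\pm T_2)^{\#_A}\|_A=\|T_1\pm T_2\|_A$. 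Substituting these back reproduces the claimed inequality verbatim.

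Alternatively, one can mirror the proof of Proposition~\ref{p9} line by line, replacing the $\mathbb{A}$-unitary $\mathbb{V}$ used there by its conjugate $\tfrac{1}{\sqrt{2}}\begin{bmatrix} -iI & O & I\\ O & \sqrt{2}I & O\\ I & O & -iI\end{bmatrix}$; every step carries over with $i$ replaced by $-i$, and the terminal estimate is unchanged because it depends only on $w_A(T_3)$, $\max\{w_A(T_1),w_A(T_2)\}$, and the sign-symmetric quantities $\|T_1\pm T_2\|_A$, with the final off-diagonal pieces evaluated through \eqref{eq1.5}. The only point demanding genuine care in either route is the bookkeeping of the signs of the $i$'s under the $A$-adjoint (via $(iT)^{\#_A}=-iT^{\#_A}$) and under conjugation by $\mathbb{V}$; there is no analytic obstacle, which is exactly why the authors merely refer back to Proposition~\ref{p9}.
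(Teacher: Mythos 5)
Your proposal is correct, and your primary route is genuinely different from (and cleaner than) the paper's. The paper offers no written proof: it simply asserts that the corollary follows ``using the arguments used in the proof of Proposition~\ref{p9}'', i.e.\ it intends your \emph{alternative} route, rerunning the whole computation with the conjugated $\mathbb{A}$-unitary and $-i$ in place of $i$. Your main argument instead deduces the corollary formally: by Lemma~\ref{lemma1} the block matrix is block-symmetric, so its $\mathbb{A}$-adjoint is obtained entrywise, and $(-iT)^{\#_A}=iT^{\#_A}$ turns it into exactly the matrix of Proposition~\ref{p9} built from $S_j=T_j^{\#_A}$; then \eqref{eqn_00000001.4}, $(T+S)^{\#_A}=T^{\#_A}+S^{\#_A}$ and $\|T^{\#_A}\|_A=\|T\|_A$ from \eqref{ineq0} convert every term of the bound back to the original $T_j$. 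This buys a two-line derivation with no new unitary conjugation and makes transparent \emph{why} the bound is unchanged (it is invariant under $T_j\mapsto T_j^{\#_A}$), at the cost of leaning on the black box of Proposition~\ref{p9} rather than exhibiting the decomposition explicitly. One caveat worth flagging: the displayed statement of Proposition~\ref{p9} carries the coefficient $\sqrt{2}$, while its proof (and the corollary you are proving) ends with $\tfrac{1}{\sqrt{2}}$; you have silently used the $\tfrac{1}{\sqrt{2}}$ version, which is the one the proof of Proposition~\ref{p9} actually establishes, so your deduction is sound, but if you cite the proposition verbatim you should note that its stated constant appears to be a typo.
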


\begin{prop}\label{p10}
		Let $T_i\in \mathcal{B}_A(\mathcal{H})$ for $1\leq i\leq 4$. Then
		\begin{align*}
	w_{\mathbb{A}}\left(\begin{bmatrix}
			T_1 & T_2& T_3&T_4\\
			T_2 & T_3&T_4&iT_1\\
   T_3&T_4&iT_1&iT_2\\
   T_4&iT_1&iT_2&iT_3
			\end{bmatrix}\right)&\leq   w_A(T_4)+2\max\{ w_A(T_1),  w_A(T_3)\}\\
   &+\frac{1}{\sqrt{2}}\left(\| T_1-T_2\|_A+\| T_1+T_2\|_A\right)
 +\frac{1}{\sqrt{2}}\left(\| T_2-T_3\|_A+\| T_2+T_3\|_A\right).
 \end{align*} 
\end{prop}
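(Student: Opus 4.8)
The plan is to follow the template of the proof of Proposition~\ref{p9}, replacing the $3\times 3$ rotation by a $4\times 4$ $\mathbb{A}$-unitary that entangles the two anti-diagonal index pairs $(1,4)$ and $(2,3)$. A natural candidate is
$$\mathbb{V}=\frac{1}{\sqrt{2}}\begin{bmatrix} iI & O & O & I\\ O & iI & I & O\\ O & I & iI & O\\ I & O & O & iI\end{bmatrix},$$
which decomposes into two $2\times 2$ rotations acting on the pairs $\{1,4\}$ and $\{2,3\}$. First I would verify, exactly as in the proof of Theorem~\ref{Thm1}, that $\mathbb{V}\mathbb{V}^{\#_{\mathbb{A}}}=\mathbb{V}^{\#_{\mathbb{A}}}\mathbb{V}=\mathrm{diag}(P,P,P,P)$ with $P=P_{\overline{\mathcal{R}(A)}}$, so that $\mathbb{V}$ is $\mathbb{A}$-unitary. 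Denoting the matrix in the statement by $\mathbb{M}$ and combining $w_{\mathbb{A}}(\mathbb{M})=w_{\mathbb{A}}(\mathbb{M}^{\#_{\mathbb{A}}})$ from \eqref{eqn_00000001.4} with the $\mathbb{A}$-unitary invariance $w_{\mathbb{A}}(\mathbb{M})=w_{\mathbb{A}}(\mathbb{V}^{\#_{\mathbb{A}}}\mathbb{M}^{\#_{\mathbb{A}}}\mathbb{V})$, the problem is reduced to estimating $w_{\mathbb{A}}$ of the conjugated matrix.

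Next I would compute $\mathbb{V}^{\#_{\mathbb{A}}}\mathbb{M}^{\#_{\mathbb{A}}}\mathbb{V}$ entrywise, using Lemma~\ref{lemma1} to carry $\#_{\mathbb{A}}$ through the block matrix and the inclusion $\mathcal{R}(T_i^{\#_A})\subseteq\overline{\mathcal{R}(A)}$ to absorb the projections $P$. Guided by Proposition~\ref{p9} (and by the fact that all four anti-diagonal entries of $\mathbb{M}$ equal $T_4$), I expect the conjugated matrix to split, after the triangle inequality for $w_{\mathbb{A}}$, into three groups: (i) an off-diagonal matrix all of whose nonzero entries equal $T_4$; (ii) two corner-type blocks built from the odd terms $T_1,T_3$, in which the even term $T_2$ cancels; and (iii) several single-entry off-diagonal matrices whose entries are the combinations $T_1\pm T_2$ and $T_2\pm T_3$, each carrying a scalar factor $\tfrac{1}{\sqrt{2}}$.

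I would then evaluate the three groups separately. For (i), the anti-diagonal all-$T_4$ matrix is $\mathbb{A}$-unitarily equivalent (via a permutation) to a block diagonal of two flips, so Lemma~\ref{them3.7} and Lemma~\ref{lem0001}(iv) give $w_A(T_4)$. For (ii), the complementary rows and columns vanish, so Lemma~\ref{them3.7} together with Lemma~\ref{lem0001} (and, if needed, Lemma~\ref{l002} after noting $w_A(iX)=w_A(X)$) reduces each block to the $\mathbb{A}$-numerical radius of its $2\times 2$ core, which evaluates to $\max\{w_A(T_1),w_A(T_3)\}$, yielding $2\max\{w_A(T_1),w_A(T_3)\}$ in total. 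For (iii), each single-entry off-diagonal matrix is nilpotent of index two, so \eqref{eq1.5} gives $w_{\mathbb{A}}=\tfrac12\|\cdot\|_{\mathbb{A}}$; collecting the resulting terms with their scalar prefactors produces exactly $\tfrac1{\sqrt2}(\|T_1-T_2\|_A+\|T_1+T_2\|_A)+\tfrac1{\sqrt2}(\|T_2-T_3\|_A+\|T_2+T_3\|_A)$. Summing the three contributions gives the claimed bound.

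The hard part will be the explicit conjugation $\mathbb{V}^{\#_{\mathbb{A}}}\mathbb{M}^{\#_{\mathbb{A}}}\mathbb{V}$ and the bookkeeping of its sixteen blocks: one must check that the chosen $\mathbb{V}$ makes (a) the $T_4$ entries assemble into a clean off-diagonal pattern, (b) the odd terms $T_1,T_3$ collect into $2\times 2$ cores annihilated on the complementary indices, and (c) the residual $T_2$-terms become strictly triangular single-entry blocks so that \eqref{eq1.5} applies. Verifying the index-two nilpotency of these residual pieces, and checking that their coefficients combine precisely to the stated $\tfrac1{\sqrt2}$ factors, is the delicate step; once the decomposition is in hand, the remaining estimates are a direct application of Lemmas~\ref{lem0001}, \ref{l002} and \ref{them3.7} as above.
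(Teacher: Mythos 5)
Your overall strategy---conjugate by an $\mathbb{A}$-unitary, apply the triangle inequality for $w_{\mathbb{A}}$, and evaluate the pieces via Lemmas \ref{lem0001}, \ref{l002}, \ref{them3.7} and \eqref{eq1.5}---is exactly the paper's, but your choice of $\mathbb{V}$ does not produce the decomposition you describe, and the step you yourself flag as delicate (your item (c)) is precisely where it fails. The paper's proof takes $\mathbb{V}=\tfrac{1}{\sqrt2}\left[\begin{smallmatrix} iI&O&O&I\\ O&\sqrt2 I&O&O\\ O&O&\sqrt2 I&O\\ I&O&O&iI\end{smallmatrix}\right]$, which rotates only the outer pair $\{1,4\}$ and leaves indices $2,3$ fixed. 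With that choice each entry coupling $\{1,4\}$ to $\{2,3\}$ of the conjugated matrix is a \emph{pure} combination $T_1\pm T_2$ or $T_2\pm T_3$ with coefficient of modulus $\tfrac{1}{\sqrt2}$, each combination occurs in exactly two entries, and \eqref{eq1.5} gives $2\cdot\tfrac{1}{\sqrt2}\cdot\tfrac12\|T_j\pm T_k\|_A=\tfrac{1}{\sqrt2}\|T_j\pm T_k\|_A$; the untouched central block $\left[\begin{smallmatrix} T_3&T_4\\ T_4&iT_1\end{smallmatrix}\right]$ and the rotated corner then yield $w_A(T_4)+2\max\{w_A(T_1),w_A(T_3)\}$. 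Your $\mathbb{V}$ rotates both pairs, and a direct computation (say in the scalar model $A=I$) gives, e.g., $(\mathbb{V}^{*}\mathbb{M}\mathbb{V})_{12}=\tfrac12\bigl((1+i)T_2-T_1-iT_3\bigr)=\tfrac12(T_2-T_1)+\tfrac{i}{2}(T_2-T_3)$: every off-antidiagonal entry couples $T_2$ simultaneously to $T_1$ and to $T_3$, with coefficients of modulus $\tfrac12$ rather than $\tfrac{1}{\sqrt2}$. Splitting these eight mixed entries into single-entry nilpotents, as you propose, picks up each of the four norms $\|T_1\pm T_2\|_A$, $\|T_2\pm T_3\|_A$ from four entries with weight $\tfrac12\cdot\tfrac12$, for a total coefficient of $1$ instead of $\tfrac{1}{\sqrt2}$; the stated inequality is therefore not reached. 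Likewise, with your $\mathbb{V}$ the $T_1,T_3$ terms do not assemble into ``two corner-type blocks'': they land on the main diagonal and on the antidiagonal attached to the $T_4$'s.

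The simplest repair is to use the paper's $\mathbb{V}$, after which your three-group plan goes through essentially verbatim (this is the argument of Proposition \ref{p9} extended to $n=4$). Your more symmetric $\mathbb{V}$ \emph{can} be salvaged, but only with different bookkeeping: pair the mixed entries into two-entry matrices supported on disjoint positions such as $(1,2)$ and $(3,4)$ (still nilpotent of index two), and keep the diagonal together with the antidiagonal remainder so that Lemma \ref{l002} applies separately on the blocks $\{1,4\}$ and $\{2,3\}$; this in fact yields the sharper bound $w_A(T_4)+\max\{w_A(T_1),w_A(T_3)\}+\tfrac12\bigl(\|T_1-T_2\|_A+\|T_1+T_2\|_A+\|T_2-T_3\|_A+\|T_2+T_3\|_A\bigr)$, but it is not the argument you outline.
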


 \begin{proof}
   Let $\mathbb{V}=\frac{1}{\sqrt{2}}\begin{bmatrix}
	iI&O&O & I\\
 O& \sqrt{2}I&O&O\\
 O&O& \sqrt{2}I&O\\
	I &O& O&iI
	\end{bmatrix}.$ 
It is not very difficult to show that $\mathbb{V}$ is $\mathbb{A}$-unitary. 
 Now, applying the arguments used in Proposition~\ref{p9}, using the fact that $w_A(U^{\#_A}TU)=w_A(T)$, and the relation $\mathcal{R}(T_i^{\#_A})\subseteq \overline{\mathcal{R}(A)}$, we have the following:
 \begin{align*}
   & w_{\mathbb{A}}\left(\begin{bmatrix}
			T_1 & T_2& T_3&T_4\\
			T_2 & T_3&T_4&iT_1\\
   T_3&T_4&iT_1&iT_2\\
   T_4&iT_1&iT_2&iT_3
			\end{bmatrix}\right)=     w_{\mathbb{A}}\left(\mathbb{V}^{\#_{\mathbb{A}}}\begin{bmatrix}
			T_1 & T_2& T_3&T_4\\
			T_2 & T_3&T_4&iT_1\\
   T_3&T_4&iT_1&iT_2\\
   T_4&iT_1&iT_2&iT_3
			\end{bmatrix}^{\#_{\mathbb{A}}}\mathbb{V}\right)\\
 &=\frac{1}{2}w_{\mathbb{A}}\left(\begin{bmatrix}
	T_1^{\#_A}-iT_3^{\#_A}&-i\sqrt{2}(T_1^{\#_A}+T_2^{\#_A})&-i\sqrt{2}(T_3^{\#_A}+T_2^{\#_A})&2T_4^{\#_A}-iT_1^{\#_A}+T_3^{\#_A}\\
 i\sqrt{2}(T_2^{\#_A}-T_1^{\#_A})&2T_3^{\#_A}&2T_4^{\#_A}&\sqrt{2}(T_1^{\#_A}+T_2^{\#_A})\\
 i\sqrt{2}(T_3^{\#_A}-T_2^{\#_A})&2T_4^{\#_A}&-i2T_1^{\#_A}&\sqrt{2}(T_2^{\#_A}+T_3^{\#_A})\\
 2T_4^{\#_A}-(-iT_1^{\#_A}+T_3^{\#_A})&\sqrt{2}(T_2^{\#_A}-T_1^{\#_A})&\sqrt{2}(T_3^{\#_A}-T_2^{\#_A})&T_1^{\#_A}-iT_3^{\#_A}
 	\end{bmatrix}\right)\\
   &\leq w_{\mathbb{A}}\left(\begin{bmatrix}
	O&O&O&T_4\\
 O&O&T_4&O\\
 O&T_4&O&O\\
 T_4&O&O&O
	\end{bmatrix}\right)+ \frac{1}{2}w_{\mathbb{A}}\left(\begin{bmatrix}
	T_1+iT_3&O&O&-(iT_1+T_3)\\
 O&O&O&O\\
 O&O&O&O\\
 iT_1+T_3&O&O&T_1+iT_3
	\end{bmatrix}\right)\\
  \end{align*}
  \begin{align*}
 &+w_{\mathbb{A}}\left(\begin{bmatrix}
 O&O&O&O\\
	O&T_3&O&O\\
 O&O&iT_1&O\\
 O&O&O&O
	\end{bmatrix}\right)+ \frac{1}{\sqrt{2}}w_{\mathbb{A}}\left(\begin{bmatrix}
	O&-i(T_2-T_1)&O&O\\
 i(T_1+T_2)&O&O&O\\
 O&O&O&O\\
 O&O&O&O
	\end{bmatrix}\right)\\
 &+\frac{1}{\sqrt{2}}w_{\mathbb{A}}\left(\begin{bmatrix}
	O&O&O&O\\
 O&O&O&(T_2-T_1)\\
 O&O&O&O\\
 O&T_1+T_2&O&O
	\end{bmatrix}\right)+ \frac{1}{\sqrt{2}}w_{\mathbb{A}}\left(\begin{bmatrix}
	O&O&-i(T_3-T_2)&O\\
  O&O&O&O\\
 i(T_3+T_2)&O&O&O\\
 O&O&O&O
	\end{bmatrix}\right)\\
 &+\frac{1}{\sqrt{2}} w_{\mathbb{A}}\left(\begin{bmatrix}
O&O&O&O\\
 O&O&O&O\\
 O&O&O&(T_3-T_2)\\
 O&O&(T_2+T_3)&O
 \end{bmatrix}\right)\\
 &\leq  w_A(T_4)+\max\{ w_A(T_1),  w_A(T_3)\}+\max\{ w_A(T_1),  w_A(T_3)\}\\
 &+\frac{1}{\sqrt{2}}w_{\mathbb{A}}\left(\begin{bmatrix}
	O&-i(T_2-T_1)&O&O\\
 O&O&O&O\\
 O&O&O&O\\
 O&O&O&O
	\end{bmatrix}\right)
 +\frac{1}{\sqrt{2}}w_{\mathbb{A}}\left(\begin{bmatrix}
	O&O&O&O\\
 i(T_1+T_2)&O&O&O\\
 O&O&O&O\\
 O&O&O&O
	\end{bmatrix}\right)\\
 &+ \frac{1}{\sqrt{2}}w_{\mathbb{A}}\left(\begin{bmatrix}
	O&O&O&O\\
 O&O&O&(T_2-T_1)\\
 O&O&O&O\\
 O&O&O&O
	\end{bmatrix}\right)+\frac{1}{\sqrt{2}}w_{\mathbb{A}}\left(\begin{bmatrix}
	O&O&O&O\\
 O&O&O&O\\
 O&O&O&O\\
 O&T_1+T_2&O&O
	\end{bmatrix}\right)\\
 &+\frac{1}{\sqrt{2}}w_{\mathbb{A}}\left(\begin{bmatrix}
	O&O&-i(T_3-T_2)&O\\
  O&O&O&O\\
 O&O&O&O\\
 O&O&O&O
	\end{bmatrix}\right)+\frac{1}{\sqrt{2}}w_{\mathbb{A}}\left(\begin{bmatrix}
	O&O&O&O\\
  O&O&O&O\\
 i(T_3+T_2)&O&O&O\\
 O&O&O&O
	\end{bmatrix}\right)
\end{align*}

\begin{align*}
    &+\frac{1}{\sqrt{2}} w_{\mathbb{A}}\left(\begin{bmatrix}
O&O&O&O\\
 O&O&O&O\\
 O&O&O&(T_3-T_2)\\
 O&O&O&O
 \end{bmatrix}\right)+\frac{1}{\sqrt{2}} w_{\mathbb{A}}\left(\begin{bmatrix}
O&O&O&O\\
 O&O&O&O\\
 O&O&O&O\\
 O&O&(T_2+T_3)&O
 \end{bmatrix}\right)\\
 &\leq  w_A(T_4)+2\max\{ w_A(T_1),  w_A(T_3)\}+\frac{1}{\sqrt{2}}\left(\| T_1-T_2\|_A+\| T_1+T_2\|_A\right)\\
 &+\frac{1}{\sqrt{2}}\left(\| T_2-T_3\|_A+\| T_2+T_3\|_A\right).
\end{align*}

 where the last equality follows by \eqref{eq1.5}.
 \end{proof}
As usual, it is not difficult to prove the following.
\begin{cor}
    Let $T_i\in \mathcal{B}_A(\mathcal{H})$ for $1\leq i\leq 4$. Then
		\begin{align*}
	w_{\mathbb{A}}\left(\begin{bmatrix}
			T_1 & T_2& T_3&T_4\\
			T_2 & T_3&T_4&-iT_1\\
   T_3&T_4&-iT_1&-iT_2\\
   T_4&-iT_1&-iT_2&-iT_3
			\end{bmatrix}\right)&\leq   w_A(T_4)+2\max\{ w_A(T_1),  w_A(T_3)\}\\
   &+\frac{1}{\sqrt{2}}\left(\| T_1-T_2\|_A+\| T_1+T_2\|_A\right)
 +\frac{1}{\sqrt{2}}\left(\| T_2-T_3\|_A+\| T_2+T_3\|_A\right).
 \end{align*}
\end{cor}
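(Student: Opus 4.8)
The plan is to imitate the proof of Proposition~\ref{p10} essentially verbatim, replacing the $\mathbb{A}$-unitary $\mathbb{V}$ used there by its complex-conjugate analogue
$$\mathbb{V}'=\frac{1}{\sqrt{2}}\begin{bmatrix} -iI&O&O&I\\ O&\sqrt{2}I&O&O\\ O&O&\sqrt{2}I&O\\ I&O&O&-iI \end{bmatrix},$$
obtained from $\mathbb{V}$ by the substitution $i\mapsto -i$. First I would verify, exactly as for $\mathbb{V}$, that $\mathbb{V}'{}^{\#_{\mathbb{A}}}\mathbb{V}'=\mathbb{V}'\mathbb{V}'{}^{\#_{\mathbb{A}}}=\mathrm{diag}(P_{\overline{\mathcal{R}(A)}},\dots,P_{\overline{\mathcal{R}(A)}})$, so that $\mathbb{V}'$ is $\mathbb{A}$-unitary. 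Writing $\mathbb{M}_-$ for the matrix in the statement, the invariance $w_{\mathbb{A}}(\mathbb{M}_-)=w_{\mathbb{A}}(\mathbb{M}_-^{\#_{\mathbb{A}}})=w_{\mathbb{A}}(\mathbb{V}'{}^{\#_{\mathbb{A}}}\mathbb{M}_-^{\#_{\mathbb{A}}}\mathbb{V}')$ then reduces the problem to estimating the conjugated matrix.

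Next I would compute $\mathbb{V}'{}^{\#_{\mathbb{A}}}\mathbb{M}_-^{\#_{\mathbb{A}}}\mathbb{V}'$. Since $\mathbb{M}_-$ is precisely the $i\mapsto -i$ counterpart of the matrix of Proposition~\ref{p10}, the same cancellations occur, the only change being that the off-diagonal scalar coefficients $\pm i,\pm i\sqrt2$ are replaced by their conjugates. Splitting the result by the triangle inequality for $w_{\mathbb{A}}$ in exactly the pattern used in Proposition~\ref{p10}, I would isolate a cross-diagonal block built from $T_4^{\#_A}$, whose $\mathbb{A}$-numerical radius equals $w_A(T_4)$; two block contributions each equal to $\max\{w_A(T_1),w_A(T_3)\}$, summing to $2\max\{w_A(T_1),w_A(T_3)\}$ (via Lemma~\ref{them3.7}, Lemma~\ref{lem0001}, and $w_A(\pm iT)=w_A(T)$); and a collection of single-entry off-diagonal blocks carrying $T_1\pm T_2$ and $T_2\pm T_3$.

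Each of the remaining off-diagonal pieces squares to the zero operator, so by \eqref{eq1.5} its $\mathbb{A}$-numerical radius is half of its $\mathbb{A}$-operator seminorm; pairing the blocks and using Lemma~\ref{lem0001}(ii)--(iii) to absorb the unimodular phases yields the terms $\frac{1}{\sqrt2}\left(\|T_1-T_2\|_A+\|T_1+T_2\|_A\right)$ and $\frac{1}{\sqrt2}\left(\|T_2-T_3\|_A+\|T_2+T_3\|_A\right)$. The only genuine point to watch—the \emph{main obstacle}, such as it is—is the sign bookkeeping: one must confirm that the substitution $i\mapsto -i$ leaves the \emph{block pattern} of $\mathbb{V}'{}^{\#_{\mathbb{A}}}\mathbb{M}_-^{\#_{\mathbb{A}}}\mathbb{V}'$ unchanged and only alters unimodular scalar factors. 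Because both $w_A(\cdot)$ and $\|\cdot\|_A$ are invariant under multiplication by such factors, the right-hand side coincides with that of Proposition~\ref{p10}, which is exactly the claimed inequality.
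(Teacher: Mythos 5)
Your proposal is correct and is exactly the route the paper intends: the corollary is stated immediately after Proposition~\ref{p10} with the remark that it follows by the same arguments, and your observation that the substitution $i\mapsto -i$ in both $\mathbb{V}$ and the matrix only conjugates the unimodular scalar coefficients (leaving the block pattern, and hence every $w_A$ and $\|\cdot\|_A$ value, unchanged) is the whole point. One small citation slip: the corner block of the form $\left[\begin{smallmatrix} S_2 & -S_1\\ S_1 & S_2\end{smallmatrix}\right]$ that produces one of the two $\max\{w_A(T_1),w_A(T_3)\}$ terms is handled by Lemma~\ref{l002} rather than by Lemma~\ref{them3.7} or Lemma~\ref{lem0001}, exactly as in the proof of Proposition~\ref{p10}.
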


Motivated by \cite[Theorem~4.2]{AKR}, we have the following result for semi-Hilbert space.
\begin{lemma}\label{Lemma3.4}
    Let $S_{1k}, S_{k1}\in \mathcal{B}_A(\mathcal{H})$ for $1\leq k\leq n$. Then
		\begin{align*}
	w_{\mathbb{A}}\left(\begin{bmatrix}
	S_{11} & S_{12}& \dots&S_{1n}\\
	S_{21} & O& \dots&O\\
    \vdots& \vdots& \dots&\vdots\\
  S_{n1}& O& \dots&O
			\end{bmatrix}\right)&\leq \frac{1}{2}\left\{w_A(S_{11})+\sqrt{w_A^2(S_{11})+4\displaystyle\sum_{k=2}^n w_\mathbb{A}^2\left(\begin{bmatrix}
		O& S_{1k}\\ S_{k1} & O
		\end{bmatrix} \right)}\right\}.
\end{align*}
Further, if $S_{1k}=S_{k1}$, then
\begin{align*}
	w_{\mathbb{A}}\left(\begin{bmatrix}
	S_{11} & S_{12}& \dots&S_{1n}\\
	S_{12} & O& \dots&O\\
    \vdots& \vdots& \dots&\vdots\\
  S_{1n}& O& \dots&O
			\end{bmatrix}\right)&\leq \frac{1}{2}\left\{w_A(S_{11})+\sqrt{w_A^2(S_{11})+4\displaystyle\sum_{k=2}^n {w_A^2(S_{1k})}}\right\}.
\end{align*}
In particular, If $ S_{11}=O$, then
\begin{align*}
	w_{\mathbb{A}}\left(\begin{bmatrix}
	O & S_{12}& \dots&S_{1n}\\
	S_{12} & O& \dots&O\\
    \vdots& \vdots& \dots&\vdots\\
  S_{1n}& O& \dots&O
			\end{bmatrix}\right)&\leq \sqrt{\displaystyle\sum_{k=2}^n {w_A^2(S_{1k})}}.
\end{align*}
\end{lemma}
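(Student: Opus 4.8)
The plan is to estimate the $\mathbb{A}$-numerical radius directly from its definition $w_{\mathbb{A}}(\mathbb{T})=\sup\{|\langle\mathbb{T}x,x\rangle_{\mathbb{A}}|:\|x\|_{\mathbb{A}}=1\}$, decomposing the quadratic form into its diagonal contribution and a sum of $2\times 2$ off-diagonal pieces. Fix $x=[x_1,\dots,x_n]^T$ with $\|x\|_{\mathbb{A}}^2=\sum_{i=1}^n\|x_i\|_A^2=1$ and set $\alpha=\|x_1\|_A$ and $\beta_k=\|x_k\|_A$ for $2\le k\le n$. Because every block outside the first row and first column is $O$, a direct computation gives
$$\langle\mathbb{T}x,x\rangle_{\mathbb{A}}=\langle S_{11}x_1,x_1\rangle_A+\sum_{k=2}^n\left(\langle S_{1k}x_k,x_1\rangle_A+\langle S_{k1}x_1,x_k\rangle_A\right),$$
and the diagonal term is controlled by $|\langle S_{11}x_1,x_1\rangle_A|\le w_A(S_{11})\alpha^2$.

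For each $k\ge 2$ put $b_k:=\langle S_{1k}x_k,x_1\rangle_A+\langle S_{k1}x_1,x_k\rangle_A$ and $M_k:=\begin{bmatrix}O&S_{1k}\\ S_{k1}&O\end{bmatrix}$, so that $b_k=\langle M_k y_k,y_k\rangle_{\mathbb{A}}$ with $y_k=[x_1,x_k]^T$. The crucial step is to sharpen the naive bound $|b_k|\le w_{\mathbb{A}}(M_k)(\alpha^2+\beta_k^2)$ into something homogeneous of degree one in each of $\alpha,\beta_k$. Since $M_k$ is purely off-diagonal, replacing $y_k$ by the rescaled vector $[tx_1,t^{-1}x_k]^T$, $t>0$, leaves $b_k$ unchanged while its squared $\mathbb{A}$-norm becomes $t^2\alpha^2+t^{-2}\beta_k^2$; applying $|\langle M_k z,z\rangle_{\mathbb{A}}|\le w_{\mathbb{A}}(M_k)\|z\|_{\mathbb{A}}^2$ to this $z$ and minimizing over $t$ (the minimum of $t^2\alpha^2+t^{-2}\beta_k^2$ is $2\alpha\beta_k$ by the arithmetic--geometric mean inequality) yields the tight estimate $|b_k|\le 2\,w_{\mathbb{A}}(M_k)\,\alpha\beta_k$.

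Writing $p=w_A(S_{11})$ and $q=\big(\sum_{k=2}^n w_{\mathbb{A}}^2(M_k)\big)^{1/2}$, the triangle inequality together with Cauchy--Schwarz, $\sum_{k=2}^n w_{\mathbb{A}}(M_k)\beta_k\le q\,\sqrt{1-\alpha^2}$, gives $|\langle\mathbb{T}x,x\rangle_{\mathbb{A}}|\le p\alpha^2+2q\alpha\sqrt{1-\alpha^2}$. Substituting $\alpha=\cos\phi$ rewrites the right-hand side as $\tfrac{p}{2}+\tfrac{p}{2}\cos 2\phi+q\sin 2\phi$, whose value never exceeds $\tfrac{p}{2}+\tfrac12\sqrt{p^2+4q^2}$; taking the supremum over all unit vectors $x$ produces the asserted inequality. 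For the refinements, if $S_{1k}=S_{k1}$ then Lemma \ref{lem0001}(iv) gives $w_{\mathbb{A}}(M_k)=w_A(S_{1k})$, and if in addition $S_{11}=O$ then $p=0$, so the bound collapses to $\sqrt{\sum_{k=2}^n w_A^2(S_{1k})}$. I expect the main obstacle to be precisely the rescaling-and-optimization step furnishing the factor $2\alpha\beta_k$: the weaker estimate $\alpha^2+\beta_k^2$ would over-count the $\alpha^2$ contributions and would not consolidate the cross terms into a single Cauchy--Schwarz sum, so it would fail to reproduce the square-root structure of the bound.
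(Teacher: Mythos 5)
Your proof is correct, but it follows a genuinely different route from the paper. The paper invokes Lemma~\ref{l17} to dominate $w_{\mathbb{A}}$ of the arrowhead operator matrix by the numerical radius of the nonnegative scalar arrowhead matrix $[s_{ij}]$, observes that this auxiliary matrix is $\mathbb{A}$-self-adjoint so its numerical radius equals its $\mathbb{A}$-spectral radius, and then computes that spectral radius by factoring the arrowhead as a product and using $r_{\mathbb{A}}(TS)=r_{\mathbb{A}}(ST)$ to collapse the problem to an effectively $2\times 2$ computation. You instead work directly from the definition of $w_{\mathbb{A}}$ as a supremum of the sesquilinear form: the decomposition of $\langle \mathbb{T}x,x\rangle_{\mathbb{A}}$ into the diagonal term plus the $2\times 2$ off-diagonal pieces $b_k=\langle M_ky_k,y_k\rangle_{\mathbb{A}}$ is exact, and your key step --- exploiting the invariance of $b_k$ under the rescaling $[x_1,x_k]\mapsto[tx_1,t^{-1}x_k]$ to upgrade $\lvert b_k\rvert\le w_{\mathbb{A}}(M_k)(\alpha^2+\beta_k^2)$ to $\lvert b_k\rvert\le 2w_{\mathbb{A}}(M_k)\alpha\beta_k$ --- is the correct replacement for the paper's spectral-radius machinery; Cauchy--Schwarz and the one-variable maximization of $p\alpha^2+2q\alpha\sqrt{1-\alpha^2}$ then reproduce exactly $\tfrac12\{p+\sqrt{p^2+4q^2}\}$. (The degenerate cases $\alpha=0$ or $\beta_k=0$ are harmless, since $\|x_k\|_A=0$ forces $b_k=0$ because $S_{1k},S_{k1}$ admit $A$-adjoints.) The two specializations via Lemma~\ref{lem0001}(iv) are handled the same way in both arguments. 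What each approach buys: the paper's route is shorter given the imported Lemma~\ref{l17} and generalizes mechanically to the full matrix of Theorem~\ref{Thm3.5}, while yours is self-contained, more elementary, and makes transparent where the square-root (arrowhead eigenvalue) structure of the bound comes from.
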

\begin{proof}
    Using Lemma \ref{l17}, we have
    \begin{align*}
w_{\mathbb{A}}\left(\begin{bmatrix}
	S_{11} & S_{12}& \dots&S_{1n}\\
	S_{21} & O& \dots&O\\
    \vdots& \vdots& \dots&\vdots\\
  S_{n1}& O& \dots&O
			\end{bmatrix}\right)&\leq  w_\mathbb{A}\left( \begin{bmatrix}
		w_A(S_{11}) & w_\mathbb{A}\left(\begin{bmatrix}
		O& S_{12}\\ S_{21} & O
		\end{bmatrix}
		\right) & \cdots & w_\mathbb{A}\left( \begin{bmatrix}
		O & S_{1n}\\ S_{n1} & O
		\end{bmatrix}\right) \\
		w_\mathbb{A}\left(\begin{bmatrix}
		O& S_{21}\\ S_{12} & O
		\end{bmatrix} \right) & O & \cdots & O\\
		\vdots & \vdots & \ddots & \vdots\\
		w_\mathbb{A}\left(\begin{bmatrix}
		O & S_{n1}\\ S_{1n} & O
		\end{bmatrix} \right) &  O & \cdots & O
		\end{bmatrix}\right).
   \end{align*}
   From Lemma \ref{lem0001} (ii), it is clear that $w_\mathbb{A}\left(\begin{bmatrix}
		O & S_{ij}\\ S_{ji} & O
		\end{bmatrix} \right)=w_\mathbb{A}\left(\begin{bmatrix}
		O & S_{ji}\\ S_{ij} & O
		\end{bmatrix} \right)$. Using this fact, it is very easy to check that the above matrix is $A$-self-adjoint. So, by using the identity  $w_\mathbb{A}(T)=r_\mathbb{A}(T)$, and $r_\mathbb{A}(TS)=r_\mathbb{A}(ST)$, we then have 
   \begin{align*}
&w_{\mathbb{A}}\left(\begin{bmatrix}
	S_{11} & S_{12}& \dots&S_{1n}\\
	S_{21} & O& \dots&O\\
    \vdots& \vdots& \dots&\vdots\\
  S_{n1}& O& \dots&O
			\end{bmatrix}\right)\\
   &\leq  r_\mathbb{A}\left( \begin{bmatrix}
		w_A(S_{11}) & w_\mathbb{A}\left(\begin{bmatrix}
		O& S_{12}\\ S_{21} & O
		\end{bmatrix}
		\right) & \cdots & w_\mathbb{A}\left( \begin{bmatrix}
		O & S_{1n}\\ S_{n1} & O
		\end{bmatrix}\right) \\
		w_\mathbb{A}\left(\begin{bmatrix}
		O& S_{12}\\ S_{21} & O
		\end{bmatrix} \right) & O & \cdots & O\\
		\vdots & \vdots & \ddots & \vdots\\
		w_\mathbb{A}\left(\begin{bmatrix}
		O & S_{1n}\\ S_{n1} & O
		\end{bmatrix} \right) &  O & \cdots & O
		\end{bmatrix}\right)\\
 &= r_\mathbb{A}\left( \begin{bmatrix}
		\frac{w_A(S_{11})}{2} & I & \cdots & O \\
		w_\mathbb{A}\left(\begin{bmatrix}
		O& S_{12}\\ S_{21} & O
		\end{bmatrix} \right) & O & \cdots & O\\
		\vdots & \vdots & \ddots & \vdots\\
		w_\mathbb{A}\left(\begin{bmatrix}
		O & S_{1n}\\ S_{n1} & O
		\end{bmatrix} \right) &  O & \cdots & O
		\end{bmatrix}\begin{bmatrix}
		I & O & \cdots &O \\
		\frac{w_A(S_{11})}{2} & w_\mathbb{A}\left(\begin{bmatrix}
		O& S_{12}\\ S_{21} & O
		\end{bmatrix} \right) & \cdots & w_\mathbb{A}\left(\begin{bmatrix}
		O & S_{1n}\\ S_{n1} & O
		\end{bmatrix} \right)\\
		\vdots & \vdots & \ddots & \vdots\\
		O&  O & \cdots & O
		\end{bmatrix}\right)\\
   &= r_\mathbb{A}\left( \begin{bmatrix}
		I & O & \cdots &O \\
		\frac{w_A(S_{11})}{2} & w_\mathbb{A}\left(\begin{bmatrix}
		O& S_{12}\\ S_{21} & O
		\end{bmatrix} \right) & \cdots & w_\mathbb{A}\left(\begin{bmatrix}
		O & S_{1n}\\ S_{n1} & O
		\end{bmatrix} \right)\\
		\vdots & \vdots & \ddots & \vdots\\
		O&  O & \cdots & O
		\end{bmatrix}\begin{bmatrix}
		\frac{w_A(S_{11})}{2} & I & \cdots & O \\
		w_\mathbb{A}\left(\begin{bmatrix}
		O& S_{12}\\ S_{21} & O
		\end{bmatrix} \right) & O & \cdots & O\\
		\vdots & \vdots & \ddots & \vdots\\
		w_\mathbb{A}\left(\begin{bmatrix}
		O & S_{1n}\\ S_{n1} & O
		\end{bmatrix} \right) &  O & \cdots & O
		\end{bmatrix}\right)\\
  &= r_\mathbb{A}\left( \begin{bmatrix}
		\frac{w_A(S_{11})}{2} & I & \cdots &O \\
		\frac{w_A^2(S_{11})}{4}+\displaystyle\sum_{k=2}^n w_\mathbb{A}^2\left(\begin{bmatrix}
		O& S_{1k}\\ S_{k1} & O
		\end{bmatrix} \right)& \frac{w_A(S_{11})}{2} & \cdots & O\\
		\vdots & \vdots & \ddots & \vdots\\
		O&  O & \cdots & O
		\end{bmatrix}\right)\\
  \end{align*}
  \begin{align*}
  &=\frac{1}{2}\left\{w_A(S_{11})+\sqrt{w_A^2(S_{11})+4\displaystyle\sum_{k=2}^n w_\mathbb{A}^2\left(\begin{bmatrix}
		O& S_{1k}\\ S_{k1} & O
		\end{bmatrix} \right)}\right\}.
   \end{align*}
\end{proof}
\begin{theorem}\label{Thm3.5}
     Let $S_{ij}$ be an $n\times n$ operator matrix with  $S_{ij}\in \mathcal{B}_A(\mathcal{H})$ for $1\leq i, j\leq n$. Then
		\begin{align*}
	w_{\mathbb{A}}\left(\begin{bmatrix}
	S_{11} & S_{12}& \dots&S_{1n}\\
	S_{21} & S_{22}& \dots&S_{2n}\\
    \vdots& \vdots& \dots&\vdots\\
  S_{n1}& S_{n2}& \dots&S_{nn}
			\end{bmatrix}\right)&\leq \frac{1}{2}\sum_{i=1}^n\left\{w_A(S_{ii})+\sqrt{w_A^2(S_{ii})+4\displaystyle\sum_{j=i+1}^n w_\mathbb{A}^2\left(\begin{bmatrix}
		O& S_{ij}\\ S_{ji} & O
		\end{bmatrix} \right)}\right\}.
\end{align*}
\end{theorem}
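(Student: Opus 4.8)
The plan is to write the full operator matrix $\mathbb{S}=[S_{ij}]$ as a sum of $n$ ``arrowhead'' operator matrices and to estimate each summand by Lemma~\ref{Lemma3.4}. For each $i\in\{1,\dots,n\}$, let $\mathbb{M}_i$ be the $n\times n$ operator matrix whose only nonzero entries are $S_{ii}$ in position $(i,i)$, the entries $S_{ij}$ in positions $(i,j)$ for $j=i+1,\dots,n$, and the entries $S_{ji}$ in positions $(j,i)$ for $j=i+1,\dots,n$; all remaining entries are $O$. A direct bookkeeping of positions shows that each diagonal entry $S_{ii}$ is accounted for exactly once (by $\mathbb{M}_i$), each strictly upper entry $S_{pq}$ with $p<q$ appears exactly once (in $\mathbb{M}_p$), and each strictly lower entry $S_{pq}$ with $p>q$ appears exactly once (in $\mathbb{M}_q$). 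Hence $\mathbb{S}=\sum_{i=1}^{n}\mathbb{M}_i$.

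Next I would reduce each $\mathbb{M}_i$ to the form handled by Lemma~\ref{Lemma3.4}. By construction the first $i-1$ rows and the first $i-1$ columns of $\mathbb{M}_i$ vanish, so with respect to the splitting $\bigoplus_{k=1}^{i-1}\mathcal{H}\,\oplus\,\bigoplus_{k=i}^{n}\mathcal{H}$ it decomposes as a direct sum $\mathbb{M}_i=O\oplus\mathbb{N}_i$, where $\mathbb{N}_i$ is the $(n-i+1)\times(n-i+1)$ operator matrix whose first row is $(S_{ii},S_{i,i+1},\dots,S_{in})$, whose first column is $(S_{ii},S_{i+1,i},\dots,S_{ni})^{T}$, and whose remaining entries are $O$. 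Directly from the definition of the $\mathbb{A}$-numerical radius (the associated sesquilinear form only involves the last $n-i+1$ components of a vector), or equivalently by the block-diagonal property in the spirit of Lemma~\ref{them3.7}, one has $w_{\mathbb{A}}(\mathbb{M}_i)=w_{\mathbb{A}}(\mathbb{N}_i)$. Since $\mathbb{N}_i$ is exactly the arrowhead matrix of Lemma~\ref{Lemma3.4} (with $n$ there replaced by $n-i+1$ and $S_{11}$ by $S_{ii}$), I obtain
\begin{align*}
w_{\mathbb{A}}(\mathbb{M}_i)\leq \frac{1}{2}\left\{w_A(S_{ii})+\sqrt{w_A^2(S_{ii})+4\sum_{j=i+1}^{n} w_\mathbb{A}^2\left(\begin{bmatrix} O& S_{ij}\\ S_{ji} & O\end{bmatrix}\right)}\right\}.
\end{align*}

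Finally, since the $\mathbb{A}$-numerical radius is a seminorm and hence subadditive, summing the decomposition yields $w_{\mathbb{A}}(\mathbb{S})\le \sum_{i=1}^{n} w_{\mathbb{A}}(\mathbb{M}_i)$, and inserting the bound above gives the claimed inequality. The only genuinely delicate point is the second step: one must verify that the vanishing of the first $i-1$ rows and columns really produces an honest direct-sum decomposition $\mathbb{M}_i=O\oplus\mathbb{N}_i$, so that the extra zero block is harmless for $w_{\mathbb{A}}$, and that the surviving block $\mathbb{N}_i$ matches the hypotheses of Lemma~\ref{Lemma3.4} verbatim; everything else is the telescoping bookkeeping of the decomposition together with the triangle inequality.
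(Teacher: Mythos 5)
Your proof is correct and follows what is evidently the intended route: the paper states Theorem \ref{Thm3.5} without proof immediately after establishing Lemma \ref{Lemma3.4}, and the natural (surely intended) argument is exactly your decomposition of $[S_{ij}]$ into $n$ arrowhead summands, each reduced to Lemma \ref{Lemma3.4} after discarding the zero rows and columns, followed by subadditivity of $w_{\mathbb{A}}$. Your bookkeeping of which summand carries each entry, and the reduction $w_{\mathbb{A}}(O\oplus\mathbb{N}_i)=w_{\mathbb{A}}(\mathbb{N}_i)$, are both sound.
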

\begin{cor}\label{cor3.11} 
     Let $S_{ij}$ be an $n\times n$ operator matrix with  $S_{ij}\in \mathcal{B}_A(\mathcal{H})$ for $1\leq i, j\leq n$. Then
     \begin{enumerate}
         \item[(i)] \begin{align*}
	w_{\mathbb{A}}\left(\begin{bmatrix}
	S_{11} & S_{12}& \dots&S_{1n}\\
	S_{12} & S_{22}& \dots&S_{2n}\\
    \vdots& \vdots& \dots&\vdots\\
  S_{1n}& S_{2n}& \dots&S_{nn}
			\end{bmatrix}\right)&\leq \frac{1}{2}\sum_{i=1}^n\left\{w_A(S_{ii})+\sqrt{w_A^2(S_{ii})+4\displaystyle\sum_{j=i+1}^n w_\mathbb{A}^2\left( S_{ij}\right)}\right\}.
\end{align*}
  \item[(ii)] \begin{align*}
	w_{\mathbb{A}}\left(\begin{bmatrix}
	S_{11} & S_{12}& \dots&S_{1n}\\
	O & S_{22}& \dots&S_{2n}\\
    \vdots& \vdots& \dots&\vdots\\
 O& O& \dots&S_{nn}
			\end{bmatrix}\right)&\leq \frac{1}{2}\sum_{i=1}^n\left\{w_A(S_{ii})+\sqrt{w_A^2(S_{ii})+\displaystyle\sum_{j=i+1}^n  \|S_{ij}\|_A^2}\right\}.
\end{align*}
     \end{enumerate}	
\end{cor}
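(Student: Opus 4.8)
The plan is to obtain both parts as immediate specializations of Theorem \ref{Thm3.5}, since the only quantities in its bound that depend on the off-diagonal structure are the block numerical radii $w_{\mathbb{A}}\!\left(\begin{bmatrix} O & S_{ij} \\ S_{ji} & O \end{bmatrix}\right)$ sitting under the square root. First I would simply evaluate these blocks in each of the two prescribed configurations and substitute.

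For part (i), the symmetry hypothesis $S_{ij}=S_{ji}$ reduces each block to $\begin{bmatrix} O & S_{ij} \\ S_{ij} & O \end{bmatrix}$. By the ``in particular'' clause of Lemma \ref{lem0001}\,(iv), its $\mathbb{A}$-numerical radius equals $w_A(S_{ij})$, so $w_{\mathbb{A}}^2\!\left(\begin{bmatrix} O & S_{ij} \\ S_{ij} & O \end{bmatrix}\right)=w_A^2(S_{ij})$. Feeding this into the bound of Theorem \ref{Thm3.5} gives the asserted inequality verbatim.

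For part (ii), upper-triangularity forces $S_{ji}=O$ for $j>i$, so the relevant block becomes $N:=\begin{bmatrix} O & S_{ij} \\ O & O \end{bmatrix}$, which satisfies $N^2=O$. Inequality \eqref{eq1.5} then yields $w_{\mathbb{A}}(N)=\tfrac12\|N\|_{\mathbb{A}}$, and I would compute $\|N\|_{\mathbb{A}}=\|S_{ij}\|_A$ straight from the definition of the block seminorm: writing $x=(x_1,x_2)$ one has $Nx=(S_{ij}x_2,0)$, whence $\|Nx\|_{\mathbb{A}}=\|S_{ij}x_2\|_A$, and the supremum over $\|x_1\|_A^2+\|x_2\|_A^2=1$ is attained with $x_1=0$. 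Consequently $w_{\mathbb{A}}^2(N)=\tfrac14\|S_{ij}\|_A^2$, the factor $4$ in Theorem \ref{Thm3.5} cancels the $\tfrac14$, and the bound reduces to the stated one with $\sum_{j=i+1}^n\|S_{ij}\|_A^2$ under the root.

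The whole argument is routine substitution; the only step demanding a little care is the elementary seminorm computation $\|N\|_{\mathbb{A}}=\|S_{ij}\|_A$ in part (ii), since here I must argue directly with the $2\times 2$ block seminorm on $\mathcal{H}\oplus\mathcal{H}$ rather than invoke a ready-made identity. No genuine obstacle arises, as the corollary is purely a reading-off of Theorem \ref{Thm3.5}.
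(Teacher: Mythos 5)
Your proposal is correct and matches the paper's intended derivation: the corollary is stated without proof as a direct specialization of Theorem \ref{Thm3.5}, with the symmetric case handled by the identity $w_{\mathbb{A}}\!\left(\begin{bmatrix} O & S_{ij} \\ S_{ij} & O \end{bmatrix}\right)=w_A(S_{ij})$ from Lemma \ref{lem0001}\,(iv) and the triangular case by the nilpotent-block computation $4\,w_{\mathbb{A}}^2\!\left(\begin{bmatrix} O & S_{ij} \\ O & O \end{bmatrix}\right)=\|S_{ij}\|_A^2$, exactly as the paper itself does in Remark \ref{Remark3.6}. Your explicit verification of $\|N\|_{\mathbb{A}}=\|S_{ij}\|_A$ is a welcome detail the paper leaves implicit.
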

\begin{remark}\label{Remark3.6}
 Let $S_{1k}\in \mathcal{B}_A(\mathcal{H})$ for $1\leq k\leq n$. Then
\begin{align*}
	w_{\mathbb{A}}\left(\begin{bmatrix}
	S_{11} & S_{12}& \dots&S_{1n}\\
	O & O& \dots&O\\
    \vdots& \vdots& \dots&\vdots\\
  O& O& \dots&O
			\end{bmatrix}\right)&\leq \frac{1}{2}\left\{w_A(S_{11})+\sqrt{w_A^2(S_{11})+4\displaystyle\sum_{k=2}^n w_\mathbb{A}^2\left(\begin{bmatrix}
		O& S_{1k}\\ O & O
		\end{bmatrix} \right)}\right\}\\
  &=\frac{1}{2}\left\{w_A(S_{11})+\sqrt{w_A^2(S_{11})+\displaystyle\sum_{k=2}^n \|S_{1k}\|_A^2}\right\}.
\end{align*}
\end{remark}
\begin{remark}
\begin{enumerate}
    \item [(i)] \begin{align*}
	w_{\mathbb{A}}\left(\begin{bmatrix}
	T & S& \dots&S\\
	S & T& \dots&S\\
    \vdots& \vdots& \dots&\vdots\\
  S& S& \dots&T
			\end{bmatrix}\right)&\leq \frac{1}{2}\sum_{i=1}^n\left\{w_A(T)+\sqrt{w_A^2(T)+4\displaystyle\sum_{j=i+1}^n w_A^2(S)}\right\}.
  \end{align*}
      \item [(ii)] \begin{align*}
	w_{\mathbb{A}}\left(\begin{bmatrix}
	T & S& \dots&S\\
	O & T& \dots&S\\
    \vdots& \vdots& \dots&\vdots\\
 O& O& \dots&T
			\end{bmatrix}\right)&\leq \frac{1}{2}\sum_{i=1}^n\left\{w_A(T)+\sqrt{w_A^2(T)+\displaystyle\sum_{j=i+1}^n \|S}\|_A^2\right\}.
  \end{align*}
\end{enumerate}
\end{remark}

\begin{example}
    Let $V$ be a skew-symmetric Volterra operator acting on $ L^2 (-1,1)$ defined by 
  \begin{align*}
      (Vg)(t)=\int_{-t}^t g(x)dx. 
  \end{align*}   
 One can check that the values of the operator $V$ are contained in the set of all
odd functions from the space $ L^2 (-1,1)$ and for the odd function $g$, $Vg=0$. So,  $V^2=0$ i.e., $V$ is a nilpotent operator of index $2$. So, $w(V)=\frac{\|V\|}{2}=\frac{2}{\pi}$, as $\|V\|=\frac{4}{\pi}$ see \cite{PRHalmos1982}. 
     Let $S=[S_{ij}]$, for $1\leq i, j\leq n$, be an $n\times n$ operator matrix with each entries are Volterra operator mentioned above. Then, by using $\mathbb{A}=I$ in Corollary \ref{cor3.11}, we have
      \begin{align*}
	w(S)&\leq \frac{1}{2}\sum_{i=1}^n\left\{\frac{2}{\pi}+\sqrt{\frac{4}{\pi^2}+(n-i) \frac{16}{\pi^2}}\right\}.
\end{align*}
\end{example}

For the $n=5$ case, we need the following setup.
Using Lemma~\ref{l17} and Lemma~\ref{Lemma3.4}, we have the following.
 \begin{lemma}\label{Lemma4.8}
   Let $T_i\in \mathcal{B}_A(\mathcal{H})$ for $1\leq i\leq 2$. Then $$w_{\mathbb{A}}\left(\begin{bmatrix}
         O&T_1&O\\
         T_1&O&T_2\\
         O&T_2&O
     \end{bmatrix}\right)\leq \sqrt{w_A^2(T_1)+w_A^2(T_2)}.$$
 \end{lemma}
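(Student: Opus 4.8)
The plan is to collapse the $\mathbb{A}$-numerical radius of the $3\times 3$ block matrix onto the ordinary numerical radius of a $3\times 3$ scalar matrix by means of Lemma~\ref{l17}, and then to bound that scalar quantity using the arrowhead estimate of Lemma~\ref{Lemma3.4}. First I would apply Lemma~\ref{l17} to $S=\begin{bmatrix} O&T_1&O\\ T_1&O&T_2\\ O&T_2&O\end{bmatrix}$. All three diagonal blocks are $O$, so $s_{ii}=w_A(O)=0$. The $(1,3)$ block pair vanishes as well, giving $s_{13}=w_{\mathbb{A}}\!\left(\begin{bmatrix}O&O\\ O&O\end{bmatrix}\right)=0$, while the two surviving off-diagonal entries are evaluated by Lemma~\ref{lem0001}(iv): $s_{12}=w_{\mathbb{A}}\!\left(\begin{bmatrix}O&T_1\\ T_1&O\end{bmatrix}\right)=w_A(T_1)$ and likewise $s_{23}=w_A(T_2)$. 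Hence
$$w_{\mathbb{A}}(S)\leq w(M),\qquad M=\begin{bmatrix}0&w_A(T_1)&0\\ w_A(T_1)&0&w_A(T_2)\\ 0&w_A(T_2)&0\end{bmatrix}.$$

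Next I would bound $w(M)$. Writing $a=w_A(T_1)$ and $b=w_A(T_2)$, a transposition of the first two coordinates conjugates $M$ by a permutation (hence unitary) matrix into the arrowhead form $\begin{bmatrix}0&a&b\\ a&0&0\\ b&0&0\end{bmatrix}$, to which the special case $S_{11}=O$ of Lemma~\ref{Lemma3.4} with $A=I$ applies directly, yielding $w(M)\leq\sqrt{a^2+b^2}$. As an independent check, $M$ is real symmetric, hence normal, so by the equality case of \eqref{p3100} its numerical radius equals its spectral radius; the characteristic polynomial $\lambda\big(\lambda^2-(a^2+b^2)\big)$ has roots $0$ and $\pm\sqrt{a^2+b^2}$, so $w(M)=\sqrt{a^2+b^2}$, confirming the bound.

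Combining the two displays gives $w_{\mathbb{A}}(S)\leq\sqrt{w_A^2(T_1)+w_A^2(T_2)}$, as claimed. I do not anticipate a real obstacle here; the only points requiring care are the correct reading of the off-diagonal scalar entries produced by Lemma~\ref{l17}, where the identity $w_{\mathbb{A}}\!\left(\begin{bmatrix}O&T\\ T&O\end{bmatrix}\right)=w_A(T)$ of Lemma~\ref{lem0001}(iv) is exactly what is needed, and the observation that $M$ is Hermitian so that its numerical radius is governed by its spectrum (or, equivalently, that it is unitarily equivalent to an arrowhead matrix covered by Lemma~\ref{Lemma3.4}).
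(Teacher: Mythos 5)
Your argument is correct, and it reaches the bound by a route that is parallel to, but not identical with, the paper's. The paper works entirely at the operator level: it conjugates the given block matrix by the $\mathbb{A}$-unitary permutation $\mathbb{U}=\begin{bmatrix} O&I&O\\ I&O&O\\ O&O&I\end{bmatrix}$, which turns $\mathbb{T}^{\#_{\mathbb{A}}}$ into the operator arrowhead matrix $\begin{bmatrix} O&T_1^{\#_A}&T_2^{\#_A}\\ T_1^{\#_A}&O&O\\ T_2^{\#_A}&O&O\end{bmatrix}$, and then invokes the $S_{11}=O$ case of Lemma~\ref{Lemma3.4} together with $w_A(T^{\#_A})=w_A(T)$. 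You instead collapse to scalars first via Lemma~\ref{l17} (correctly reading off the entries with Lemma~\ref{lem0001}(iv), and noting $s_{13}=0$), and only then perform the transposition at the scalar level. Since Lemma~\ref{Lemma3.4} is itself proved from Lemma~\ref{l17}, the two proofs rest on the same pinching principle; the difference is the order in which the reduction and the permutation are applied. What your version buys is a completely elementary endgame: the $3\times 3$ real symmetric matrix $M$ has spectrum $\{0,\pm\sqrt{a^2+b^2}\}$, so $w(M)=\sqrt{a^2+b^2}$ exactly, which shows that the scalar majorant saturates the stated bound and avoids any appeal to the spectral-radius manipulations inside Lemma~\ref{Lemma3.4}. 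The paper's version, by contrast, stays uniform with its treatment of the $n\times n$ arrowhead case and avoids introducing the intermediate scalar matrix. Both are valid; no gap.
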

 \begin{proof}
     Let $\mathbb{U}=\begin{bmatrix}
         O&I&O\\
         I&O&O\\
         O&O&I
     \end{bmatrix}$ . One can check that $\mathbb{U}^{\#\mathbb{A}}\begin{bmatrix}
         O&T_1&O\\
         T_1&O&T_2\\
         O&T_2&O
     \end{bmatrix}^{\#\mathbb{A}}\mathbb{U}=\begin{bmatrix}
         O&T_1^{\#A}&T_2^{\# A}\\
         T_1^{\# A}&O&O\\
         T_2^{\# A}&O&O
      \end{bmatrix}$. Hence, the result follows from $w_A(U^{\#_A}TU)=w_A(T)$ and Lemma~\ref{Lemma3.4}. 
 \end{proof}

\begin{prop}\label{p11}
		Let $T_i\in \mathcal{B}_A(\mathcal{H})$ for $1\leq i\leq 5$. Then
		\begin{align*}
	w_{\mathbb{A}}\left(\begin{bmatrix}
			T_1 & T_2& T_3&T_4&T_5\\
			T_2 & T_3&T_4&T_5&iT_1\\
   T_3&T_4&T_5&iT_1&iT_2\\
   T_4&T_5&iT_1&iT_2&iT_3\\
   T_5&iT_1&iT_2&iT_3&iT_4
			\end{bmatrix}\right)&\leq   w_A(T_5)+\max\{ w_A(T_2),  w_A(T_3)\}+\max\{ w_A(T_1),  w_A(T_4)\}\\
   &+\frac{1}{\sqrt{2}}\sum_{i=1}^{3}\left(\| T_{i+1}-T_i\|_A+\| T_{i+1}+T_i\|_A\right)
 +\sqrt{w_A^2(T_4)+w_A^2(T_2)}.
 \end{align*} 
\end{prop}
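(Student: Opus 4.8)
The plan is to follow the template of Propositions~\ref{p9} and~\ref{p10}. Writing $\mathbb{M}$ for the matrix in the statement, I would conjugate by the $5\times5$ analogue of the $\mathbb{A}$-unitary used there, namely
$$\mathbb{V}=\frac{1}{\sqrt{2}}\begin{bmatrix}
iI&O&O&O&I\\
O&\sqrt{2}I&O&O&O\\
O&O&\sqrt{2}I&O&O\\
O&O&O&\sqrt{2}I&O\\
I&O&O&O&iI
\end{bmatrix},$$
whose only nontrivial action is the unitary corner block $\tfrac{1}{\sqrt{2}}\begin{bmatrix}i&1\\1&i\end{bmatrix}$ mixing the first and fifth coordinates while fixing the middle three. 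Exactly as before one checks that $\mathbb{V}^{\#_{\mathbb{A}}}\mathbb{V}=\mathbb{V}\mathbb{V}^{\#_{\mathbb{A}}}$ equals the block-diagonal operator matrix with $P_{\overline{\mathcal{R}(A)}}$ on each diagonal block, so $\mathbb{V}$ is $\mathbb{A}$-unitary and $w_{\mathbb{A}}(\mathbb{M})=w_{\mathbb{A}}(\mathbb{M}^{\#_{\mathbb{A}}})=w_{\mathbb{A}}(\mathbb{V}^{\#_{\mathbb{A}}}\mathbb{M}^{\#_{\mathbb{A}}}\mathbb{V})$.

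First I would compute $C:=\mathbb{V}^{\#_{\mathbb{A}}}\mathbb{M}^{\#_{\mathbb{A}}}\mathbb{V}$ explicitly, using Lemma~\ref{lemma1} to pass to $\mathbb{M}^{\#_{\mathbb{A}}}$ and the inclusion $\mathcal{R}(T_i^{\#_A})\subseteq\overline{\mathcal{R}(A)}$ to absorb the projections. The effect of the corner mixing is that both diagonal corners $(1,1),(5,5)$ become $\tfrac12(T_1^{\#_A}-iT_4^{\#_A})$, the anti-diagonal positions $(1,5),(2,4),(3,3),(4,2),(5,1)$ all carry $T_5^{\#_A}$, and every remaining entry of the first and fifth rows and columns becomes a normalized combination $\tfrac{1}{\sqrt2}(T_i^{\#_A}\pm T_{i+1}^{\#_A})$; the interior block on coordinates $2,3,4$ is left untouched and keeps $T_3^{\#_A},-iT_2^{\#_A}$ on its non-center diagonal and $T_4^{\#_A},-iT_1^{\#_A}$ on its two off-anti-diagonals.

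Next I would split $C$ by the triangle inequality for $w_{\mathbb{A}}$ into five summands and bound each by an earlier result. The anti-diagonal carrying $T_5^{\#_A}$ reduces to $w_A(T_5)$ by Lemma~\ref{lem0001}. The $2\times2$ corner block, after factoring $\tfrac12$, has the form $\begin{bmatrix}a&-b\\b&a\end{bmatrix}$ with $a=T_1^{\#_A}-iT_4^{\#_A}$ and $b=iT_1^{\#_A}-T_4^{\#_A}$, so Lemma~\ref{l002} (using $b+ia=2iT_1^{\#_A}$ and $b-ia=-2T_4^{\#_A}$) turns it into $\max\{w_A(T_1),w_A(T_4)\}$. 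The two surviving interior diagonal entries $T_3^{\#_A}$ and $-iT_2^{\#_A}$ give $\max\{w_A(T_2),w_A(T_3)\}$ by Lemma~\ref{lem0001}. Each off-diagonal entry $\tfrac{1}{\sqrt2}(T_i^{\#_A}\pm T_{i+1}^{\#_A})$, viewed as a single nonzero block, squares to zero, so by \eqref{eq1.5} its $\mathbb{A}$-numerical radius equals half its norm; collecting the two occurrences of each $\|T_i\pm T_{i+1}\|_A$ produces $\tfrac{1}{\sqrt2}\sum_{i=1}^{3}(\|T_{i+1}-T_i\|_A+\|T_{i+1}+T_i\|_A)$. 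Finally, the leftover interior off-anti-diagonal entries assemble into the tridiagonal block $\begin{bmatrix}O&T_4^{\#_A}&O\\T_4^{\#_A}&O&-iT_1^{\#_A}\\O&-iT_1^{\#_A}&O\end{bmatrix}$ on coordinates $2,3,4$, which Lemma~\ref{Lemma4.8} bounds by $\sqrt{w_A^2(T_4)+w_A^2(T_1)}$. Summing the five bounds and using $w_A(T^{\#_A})=w_A(T)$ and $\|T^{\#_A}\|_A=\|T\|_A$ then yields the asserted inequality.

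The main obstacle is the bookkeeping of the conjugation: one must track precisely how $\mathbb{V}$ redistributes the first and fifth rows and columns, and, crucially, recognize that—unlike the smaller cases $n=3,4$, where the analogous interior off-diagonal entries happen to sit on the anti-diagonal and are absorbed into the $w_A(T_5)$-type term—here, for $n=5$, the two off-anti-diagonals survive as a genuine $3\times3$ tridiagonal block. Isolating this block correctly is exactly what forces the use of Lemma~\ref{Lemma4.8} and accounts for the extra square-root summand that has no counterpart for $n=3,4$.
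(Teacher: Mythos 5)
Your proposal follows the paper's own proof essentially step for step: the same $\mathbb{A}$-unitary $\mathbb{V}$, the same conjugation via Lemma~\ref{lemma1} and the range inclusion $\mathcal{R}(T_i^{\#_A})\subseteq\overline{\mathcal{R}(A)}$, the same five-way splitting by the triangle inequality, and the same supporting results (Lemma~\ref{lem0001}, Lemma~\ref{l002}, \eqref{eq1.5}, Lemma~\ref{Lemma4.8}). Your identification of the surviving interior tridiagonal block as the genuinely new feature of the $n=5$ case is exactly the point of the paper's Lemma~\ref{Lemma4.8}.

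One discrepancy deserves attention. As you correctly compute, the surviving block on coordinates $2,3,4$ is $\begin{bmatrix} O&T_4^{\#_A}&O\\ T_4^{\#_A}&O&-iT_1^{\#_A}\\ O&-iT_1^{\#_A}&O\end{bmatrix}$, and Lemma~\ref{Lemma4.8} bounds its $\mathbb{A}$-numerical radius by $\sqrt{w_A^2(T_4)+w_A^2(T_1)}$, since $w_A(-iT_1)=w_A(T_1)$. The proposition, however, asserts the last summand to be $\sqrt{w_A^2(T_4)+w_A^2(T_2)}$, and since $w_A(T_1)$ and $w_A(T_2)$ are not comparable in general, your closing claim that summing the five bounds ``yields the asserted inequality'' is not literally correct: what your argument proves is the inequality with $\sqrt{w_A^2(T_4)+w_A^2(T_1)}$ in the last term. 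The paper's own proof exhibits exactly the same jump (it applies Lemma~\ref{Lemma4.8} to the block containing $T_4$ and $-iT_1$ and then writes $w_A^2(T_2)$), so this appears to be a typo in the statement rather than a gap in your reasoning; still, you should either correct the target inequality to involve $w_A^2(T_1)$ or flag the substitution explicitly rather than asserting agreement with the stated bound.
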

\begin{proof}
Let $\mathbb{V}=\frac{1}{\sqrt{2}}\begin{bmatrix}
	iI&O&O &O& I\\
 O& \sqrt{2}I&O&O&O\\
 O&O& \sqrt{2}I&O&O\\
 O&O&O& \sqrt{2}I&O\\
	I &O&O& O&iI
	\end{bmatrix}$. It is not very difficult to show that $\mathbb{V}$ is $\mathbb{A}$-unitary. 
 Now, applying the arguments used in Proposition~\ref{p8} and Proposition~\ref{p9}, the fact that $w_A(U^{\#_A}TU)=w_A(T),$ and the relation $\mathcal{R}(T_i^{\#_A})\subseteq \overline{\mathcal{R}(A)}$, we have the following:
 \begin{align*}
     &w_{\mathbb{A}}\left(\begin{bmatrix}
			T_1 & T_2& T_3&T_4&T_5\\
			T_2 & T_3&T_4&T_5&iT_1\\
   T_3&T_4&T_5&iT_1&iT_2\\
   T_4&T_5&iT_1&iT_2&iT_3\\
   T_5&iT_1&iT_2&iT_3&iT_4
			\end{bmatrix}\right)=w_{\mathbb{A}}\left(\mathbb{V}^{\#_{\mathbb{A}}}\begin{bmatrix}
			T_1 & T_2& T_3&T_4&T_5\\
			T_2 & T_3&T_4&T_5&iT_1\\
   T_3&T_4&T_5&iT_1&iT_2\\
   T_4&T_5&iT_1&iT_2&iT_3\\
   T_5&iT_1&iT_2&iT_3&iT_4
			\end{bmatrix}^{\#_{\mathbb{A}}}\mathbb{V}\right)\\
 &=\frac{1}{2}w_{\mathbb{A}}\left(\scriptsize{\begin{bmatrix}
	T_1^{\#_A}-iT_4^{\#_A}&-i\sqrt{2}(T_1^{\#_A}+T_2^{\#_A})&-i\sqrt{2}(T_3^{\#_A}+T_2^{\#_A})&i\sqrt{2}(T_4^{\#_A}+T_3^{\#_A})&2T_5^{\#_A}-iT_1^{\#_A}+T_4^{\#_A}\\
 i\sqrt{2}(T_2^{\#_A}-T_1^{\#_A})&2T_3^{\#_A}&2T_4^{\#_A}&2T_5^{\#_A}&\sqrt{2}(T_1^{\#_A}+T_2^{\#_A})\\
 i\sqrt{2}(T_3^{\#_A}-T_2^{\#_A})&2T_4^{\#_A}&2T_5^{\#_A}&-i2T_1^{\#_A}&\sqrt{2}(T_2^{\#_A}+T_3^{\#_A})\\
 i\sqrt{2}(T_4^{\#_A}-T_3^{\#_A})&2T_5^{\#_A}&-i2T_1^{\#_A}&-i2T_2^{\#_A}&\sqrt{2}(T_4^{\#_A}+T_3^{\#_A})\\
 2T_5^{\#_A}-(-iT_1^{\#_A}+T_4^{\#_A})&\sqrt{2}(T_2^{\#_A}-T_1^{\#_A})&\sqrt{2}(T_3^{\#_A}-T_2^{\#_A})&\sqrt{2}(T_4^{\#_A}-T_3^{\#_A})&T_1^{\#_A}-iT_4^{\#_A}
	\end{bmatrix}}\right)
   \\ &\leq   w_A(T_5)+\max\{ w_A(T_1),  w_A(T_4)\}
   +\frac{1}{\sqrt{2}}\sum_{i=1}^{3}\left(\| T_{i+1}-T_i\|_A+\| T_{i+1}+T_i\|_A\right)\\
   &+w_{\mathbb{A}}\left(\begin{bmatrix}
       O&O&O&O&O\\
       O&T_3&T_4&O&O\\
       O&T_4&O&-iT_1&O\\
       O&O&-iT_1&-iT_2&O\\
       O&O&O&O&O
   \end{bmatrix}\right)\\
   &\leq   w_A(T_5)+\max\{ w_A(T_1),  w_A(T_4)\}
   +\frac{1}{\sqrt{2}}\sum_{i=1}^{3}\left(\| T_{i+1}-T_i\|_A+\| T_{i+1}+T_i\|_A\right)\\
   &+w_{\mathbb{A}}\left(\begin{bmatrix}
       O&T_4&O\\
       T_4&O&-iT_1\\
       O&-iT_1&O\\
   \end{bmatrix}\right)+w_{\mathbb{A}}\left(\begin{bmatrix}
       T_3&O&O\\
       O&O&O\\
       O&O&-iT_2\\
   \end{bmatrix}\right)\\
   &\leq   w_A(T_5)+\max\{ w_A(T_1),  w_A(T_4)\}
   +\frac{1}{\sqrt{2}}\sum_{i=1}^{3}\left(\| T_{i+1}-T_i\|_A+\| T_{i+1}+T_i\|_A\right)\\
 &+\sqrt{w_A^2(T_4)+w_A^2(T_2)}+\max\{ w_A(T_2),  w_A(T_3)\},
 \end{align*} 
 where the last inequalities follows from Lemma \ref{Lemma4.8} and Lemma \ref{lem0001}.
 \end{proof}

\section{Applications}
\subsection{Bounds for the eigenvalues of Matrix polynomials}
In this section, we present the numerical radius of the Frobenius companion matrix to derive new upper bounds for the eigenvalues of a monic matrix polynomial.
Let $P(z)=Iz^n+S_nz^{n-1}+\cdots +S_2z+S_1$ be a monic polynomial of degree $n\geq 2$ with matrix coefficients $S_1, \dots,S_n\in \mathbb{M}_n(\mathbb{C})$.  The Frobenius companion matrix of $P(z)$ is the matrix 
			  $$ C(P)=\begin{bmatrix}
			-S_n & -S_{n-1}  & \cdots  &   -S_2 & -S_1\\
			I  & O & \cdots  & O & O\\
			O  & I & \cdots  & O & O\\
			\vdots & \vdots &\ddots & \vdots & \vdots \\
			O  & O  &\cdots&  I & O
			\end{bmatrix}. $$\\
			This matrix establishes a crucial link between matrix analysis and geometry of matrix polynomials. A scalar $\lambda$ is an eigenvalue of $P(z)$ if there exists a non-zero vector $x\in \mathbb{C}^n$ such that $P(\lambda)x=0$.
			Moreover, $\lambda$ is an eigenvalue of $P(z)$ if and only if $\lambda \in \sigma(C(P))$ (see \cite{MFujiiFKuboPJA1973}).
			Since the eigenvalues of $C(P)$ are contained in its numerical range, if $\lambda$ is an eigenvalue of $C(P)$, then  
			$$|\lambda|\leq \rho(C(P))\leq w(C(P))\leq \|C(P)\|.$$
An important application of numerical radius inequalities is to bound the zeros of complex polynomials using a suitable partition of the well-known Frobenius companion matrix. Based on numerical radius  estimations of the companion matrix, various methods have been developed to find different upper bounds for the zeros of polynomials.
		The polynomial eigenvalue problem (computing and locating the eigenvalues of matrix polynomials) is a crucial topic in computation mathematics. The polynomial eigenvalue problem has recently received much attention. The authors of \cite{NJHighamFTisseurLAA2003,LeCTDuTHBNguyenTDOAM2019,AMelmanLAMA2019,BaniKitMus,JaradatKittaneh,Kittaneh2022} obtained bounds for the eigenvalues of matrix polynomials that are based on various matrix inequalities. Bounds for the eigenvalues of matrix polynomials that are based on norm and numerical radius inequalities can be found in \cite{NJHighamFTisseurLAA2003}. In 2019, Le {\it et al.} \cite{LeCTDuTHBNguyenTDOAM2019} used the norms of the coefficients matrices of a matrix polynomial to establish some (upper and lower) bounds for the eigenvalues of matrix polynomials. In 2019, Melman \cite{AMelmanLAMA2019} showed how $l$-ifications, namely, lower order matrix polynomials with the same eigenvalues as a given matrix polynomial, can be used to produce eigenvalue bounds.    
	 Using Remark \ref{Remark3.6}, we estimate the numerical radius of $C(P)$ to derive new upper bounds for the eigenvalues of $P(z)$.
  \begin{example}\label{Exmpshift}
	Let $R_s$ be the (right) shift operator on $\mathbb{C}^n$ defined by
	$$R_s=\begin{bmatrix}
	0 & 0 & \cdots  &   0 & 0\\
	1  & 0 & \cdots  & 0 & 0\\
	0  & 1 & \cdots  & 0 & 0\\
	\vdots & \vdots &\ddots & \vdots & \vdots \\
	0  & 0  &\cdots&  1 & 0
	\end{bmatrix}.$$ It is not difficult to see \textnormal{\cite{KEGustafsonDKMRao1991}} that
	$w(R_s)=\cos\left(\frac{\pi}{n+1}\right).$
\end{example}
 \begin{theorem}\label{Thm5.3}
 We have 
 \begin{align}
     w(C(P))\leq \frac{1}{2}\biggl\{w(S_{n})+\sqrt{w^2(S_{n})+4 w_\mathbb{A}^2\left(\begin{bmatrix}
		O& S_{n-1}\\I & O
		\end{bmatrix} \right)+\displaystyle\sum_{k=1}^{n-2} \|S_{k}\|^2}\biggr\}+\cos \left(\frac{\pi}{n}\right).
 \end{align}
 \end{theorem}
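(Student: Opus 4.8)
The plan is to split the companion matrix as $C(P)=\mathbb{M}_1+\mathbb{M}_2$, where $\mathbb{M}_1$ is the ``arrow'' part carrying the full first block-row together with the single subdiagonal identity sitting in the first column, and $\mathbb{M}_2$ collects the remaining subdiagonal identities. Explicitly, $\mathbb{M}_1$ has first row $(-S_n,-S_{n-1},\dots,-S_1)$ and the block $I$ in the $(2,1)$ slot, with all other entries $O$; while $\mathbb{M}_2$ has identity blocks in the positions $(3,2),(4,3),\dots,(n,n-1)$ and zeros elsewhere. Since here $\mathbb{A}=I$, the ordinary numerical radius coincides with $w_{\mathbb{A}}$, and the triangle inequality for the numerical radius gives $w(C(P))\le w(\mathbb{M}_1)+w(\mathbb{M}_2)$; it then remains to estimate the two summands separately.

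For the arrow part I would invoke Lemma~\ref{Lemma3.4} with $S_{11}=-S_n$, $S_{21}=I$, $S_{k1}=O$ for $k\ge 3$, and $S_{1k}=-S_{n-k+1}$ for $2\le k\le n$. The $k=2$ term of the sum contributes $w_{\mathbb{A}}^2\!\left(\begin{bmatrix}O&-S_{n-1}\\ I&O\end{bmatrix}\right)$, which equals $w_{\mathbb{A}}^2\!\left(\begin{bmatrix}O&S_{n-1}\\ I&O\end{bmatrix}\right)$ after absorbing the sign via parts (ii) and (iii) of Lemma~\ref{lem0001}. For each $k\ge 3$ the associated block $\begin{bmatrix}O&-S_{n-k+1}\\ O&O\end{bmatrix}$ squares to zero, so \eqref{eq1.5} yields $w_{\mathbb{A}}\!\left(\begin{bmatrix}O&-S_{n-k+1}\\ O&O\end{bmatrix}\right)=\tfrac12\|S_{n-k+1}\|$, whence $4\,w_{\mathbb{A}}^2=\|S_{n-k+1}\|^2$; summing over $k=3,\dots,n$ and reindexing produces $\sum_{k=1}^{n-2}\|S_k\|^2$. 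This reconstructs exactly the bracketed quantity in the statement, i.e.\ $w(\mathbb{M}_1)$ is at most $\tfrac12\{w(S_n)+\sqrt{w^2(S_n)+4w_{\mathbb{A}}^2(\cdots)+\sum_{k=1}^{n-2}\|S_k\|^2}\}$.

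For the shift part, I would observe that $\mathbb{M}_2$ is supported on the block-coordinates $2,\dots,n$ and there acts as the $(n-1)\times(n-1)$ block right-shift with identity entries; equivalently $\mathbb{M}_2=R_s^{(n-1)}\otimes I$, where $R_s^{(n-1)}$ is the shift of Example~\ref{Exmpshift} on $\mathbb{C}^{\,n-1}$. Because the numerical range is unchanged under tensoring with an identity (any unit vector decomposes as a combination $\sum_j x_j\otimes e_j$, giving a convex combination of elements of $W(R_s^{(n-1)})$), one has $w(\mathbb{M}_2)=w(R_s^{(n-1)})=\cos\!\big(\tfrac{\pi}{n}\big)$. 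Adding the two estimates yields the claimed inequality. The hard part will be the index and dimension bookkeeping rather than any deep estimate: one must keep the subdiagonal $I$ paired with $-S_{n-1}$ \emph{inside} the arrow part (so it appears as the unique genuinely off-diagonal $2\times2$ block in Lemma~\ref{Lemma3.4}), and must correctly count $n-2$ surviving shift entries living on $n-1$ coordinates, so that the shift radius comes out as $\cos(\pi/n)$ rather than $\cos(\pi/(n+1))$.
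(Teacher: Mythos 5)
Your proof is correct and follows essentially the same route as the paper: the same arrow-plus-shift splitting of $C(P)$, the triangle inequality for the numerical radius, Lemma~\ref{Lemma3.4} (with $\mathbb{A}=I$) for the arrow part, and Example~\ref{Exmpshift} for the truncated shift, yielding $\cos(\pi/n)$. You in fact supply more detail than the paper does --- the sign absorption via Lemma~\ref{lem0001}, the nilpotent off-diagonal blocks handled via \eqref{eq1.5}, and the correct bookkeeping that keeps the $(2,1)$ identity inside the arrow part so that the remaining shift lives on $n-1$ block coordinates (a point the paper's displayed $R_s$ actually obscures).
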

 \begin{proof}
 Note that $C(P)$ can be written as $C(P)=S+R_s$, where \\$ S=\begin{bmatrix}
			-S_n & -S_{n-1}  & \cdots  &   -S_2 & -S_1\\
			I  & O & \cdots  & O & O\\
			O  & O & \cdots  & O & O\\
			\vdots & \vdots &\ddots & \vdots & \vdots \\
			O  & O  &\cdots&  O & O
			\end{bmatrix} $ and $ R_s=\begin{bmatrix}
			O & O  & \cdots  &  O\\
			I  & O & \ddots    & \vdots\\
			O  & I & \ddots & \vdots\\
			\vdots & ~~~~~\ddots & ~~\ddots & \vdots  \\
			O  & O  &\cdots  I & O
			\end{bmatrix} $ .\\
			Now, using $\mathbb{A}=I$ in Lemma \ref{Lemma3.4} and Example \ref{Exmpshift}, we have
			\begin{align*}
			    w(C(P))&=w(S+R_s)\\
			    &\leq w(S)+w(R_s)\\
			    &\leq \frac{1}{2}\biggl\{w(S_{n})+\sqrt{w^2(S_{n})+4 w^2\left(\begin{bmatrix}
		O& S_{n-1}\\I & O
		\end{bmatrix} \right)+\displaystyle\sum_{k=1}^{n-2} \|S_{k}\|^2}\biggr\}+\cos \left(\frac{\pi}{n}\right).
			\end{align*}  
 \end{proof}
 The following result is for a different partition.
\begin{theorem}\label{Thm5.4}
 We have 
 \begin{align}
     w(C(P))\leq  \frac{1}{2}\biggl(w(S_{n})+\sqrt{w^2(S_{n})+\displaystyle\sum_{k=1}^{n-1} \|S_{k}\|^2} \biggr)+\cos \left(\frac{\pi}{n+1}\right). 
 \end{align}
 \end{theorem}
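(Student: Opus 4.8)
The plan is to follow the strategy of Theorem~\ref{Thm5.3}, but with a cleaner splitting of $C(P)$ that isolates the \emph{full} shift operator of Example~\ref{Exmpshift}. First I would write $C(P)=S'+R_s$, where $S'$ retains only the (nonzero) first row of $C(P)$ while $R_s$ carries all the subdiagonal identities:
$$S'=\begin{bmatrix}
-S_n & -S_{n-1} & \cdots & -S_2 & -S_1\\
O & O & \cdots & O & O\\
\vdots & \vdots & & \vdots & \vdots\\
O & O & \cdots & O & O
\end{bmatrix},\qquad
R_s=\begin{bmatrix}
O & O & \cdots & O & O\\
I & O & \cdots & O & O\\
O & I & \cdots & O & O\\
\vdots & & \ddots & & \vdots\\
O & O & \cdots & I & O
\end{bmatrix}.$$
Unlike the splitting in Theorem~\ref{Thm5.3}, where the $(2,1)$-entry was absorbed into the top block (leaving a shift on only $n-1$ coordinates, hence $\cos(\pi/n)$), here the shift $R_s$ is kept intact, so it is precisely the $n\times n$ shift of Example~\ref{Exmpshift} and its numerical radius is exactly $\cos\!\left(\frac{\pi}{n+1}\right)$.

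Next I would apply the subadditivity of the numerical radius (recall that $w(\cdot)$ is a norm on $\mathcal{B}(\mathcal{H})$) to obtain $w(C(P))\le w(S')+w(R_s)$. By Example~\ref{Exmpshift}, $w(R_s)=\cos\!\left(\frac{\pi}{n+1}\right)$. For the single-row matrix $S'$, I would invoke Remark~\ref{Remark3.6} with $\mathbb{A}=I$, taking $S_{11}=-S_n$ and $S_{1k}=-S_{n-k+1}$ for $2\le k\le n$. Since the numerical radius and the norm are insensitive to an overall sign, $w(S_{11})=w(S_n)$, and after the reindexing $k\mapsto n-k+1$ one has $\sum_{k=2}^{n}\|S_{1k}\|^2=\sum_{j=1}^{n-1}\|S_j\|^2$; Remark~\ref{Remark3.6} then gives
$$w(S')\le \frac{1}{2}\Bigl(w(S_n)+\sqrt{w^2(S_n)+\sum_{k=1}^{n-1}\|S_k\|^2}\Bigr).$$
Adding the two estimates produces exactly the asserted bound.

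I do not expect a serious obstacle, since the argument is entirely structural, resting on the right choice of splitting together with Remark~\ref{Remark3.6} (itself a consequence of the fact that $\bigl[\begin{smallmatrix} O & S_{1k}\\ O & O\end{smallmatrix}\bigr]$ squares to zero, so that its $\mathbb{A}$-numerical radius equals half its $A$-norm via \eqref{eq1.5}). The only points demanding care are the bookkeeping in the reindexing $k\mapsto n-k+1$, which must correctly gather $\|S_{n-1}\|,\dots,\|S_1\|$ into $\sum_{k=1}^{n-1}\|S_k\|^2$, and a quick verification that this $R_s$ is genuinely the shift of Example~\ref{Exmpshift} (all $n-1$ subdiagonal identities present), rather than the truncated version used in Theorem~\ref{Thm5.3}.
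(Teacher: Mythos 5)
Your proposal is correct and follows essentially the same route as the paper: the same splitting $C(P)=S'+R_s$ with the full first row in $S'$ and the intact $n\times n$ shift in $R_s$, followed by subadditivity of $w(\cdot)$, Remark~\ref{Remark3.6} with $\mathbb{A}=I$ for the single-row block, and Example~\ref{Exmpshift} for $w(R_s)=\cos\left(\frac{\pi}{n+1}\right)$. Your reindexing and the sign-invariance observation are exactly the (implicit) bookkeeping in the paper's argument, so there is nothing to add.
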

 \begin{proof}
 Note that $C(P)$ can be written as $C(P)=S+R_s$, where \\$ S=\begin{bmatrix}
			-S_n & -S_{n-1}  & \cdots  &   -S_2 & -S_1\\
			O & O & \cdots  & O & O\\
			O  & O & \cdots  & O & O\\
			\vdots & \vdots &\ddots & \vdots & \vdots \\
			O  & O  &\cdots&  O & O
			\end{bmatrix} $ and $ R=\begin{bmatrix}
			O & O  & \cdots  &  O\\
			I  & O & \ddots    & \vdots\\
			O  & I & \ddots & \vdots\\
			\vdots & ~~~~~\ddots & ~~\ddots & \vdots  \\
			O  & O  &\cdots  I & O
			\end{bmatrix} $ .\\
			Now, using $\mathbb{A}=I$ in Remark \ref{Remark3.6} and Example \ref{Exmpshift}, we have
			\begin{align*}
			    w(C(P))&=w(S+R_s)\\
			    &\leq w(S)+w(R_s)\\
			    &\leq \frac{1}{2}\biggl(w(S_{n})+\sqrt{w^2(S_{n})+\displaystyle\sum_{k=1}^{n-1} \|S_{k}\|^2} \biggr)+\cos \left(\frac{\pi}{n+1}\right).
			\end{align*}  
 \end{proof}
 \begin{example}
 Let us consider the $3\times 3$ monic matrix polynomial $P(z)=Iz^3+S_3z^{2}+S_2z+S_1$, whose coefficient matrices are given by
 $S_3=\begin{pmatrix}
     0 & 0 &0\\
       0 & 0 &0\\
         0 & 0 &0\\
 \end{pmatrix}$, $S_2=\begin{pmatrix}
     2 & 1 &1\\
       1 &2 &1\\
         1 & 1 &2\\
 \end{pmatrix}$, $S_1=\begin{pmatrix}
     4 & 1 &1\\
       1 &4 &1\\
         1 & 1 &4\\
 \end{pmatrix}$. If $\lambda$ is any zero of the polynomial $P(z)$, then Theorem \ref{Thm5.3} gives  $|\lambda|\leq  \frac{1}{2}\biggl\{\sqrt{4 w^2\left(\begin{bmatrix}
		O& S_{2}\\I & O
		\end{bmatrix} \right)+36}\biggr\}+\cos \left(\frac{\pi}{3}\right)=4.405$, while Theorem \ref{Thm5.4} gives $|\lambda|\leq 4.312658 $.
 \end{example}
 \subsection{Bounds for Foguel operators}
An operator $T\in \mathcal{L(H)}$ is said to be \textit{polynomially
bounded}\index{polynomially
bounded} if there exists an $M\geq 1$ such that $\|p(T)\|\leq
M\sup \{|p(z)|: |z|=1\}$ for all polynomials $p$. An operator
$T\in \mathcal{L(H)}$ is similar to a contraction if there exists a
bounded invertible operator $L$ such that $\|LTL^{-1}\| \leq 1$.
Halmos \cite{PRHalmosBAMS1970} raised a question whether every {\it polynomially
bounded} operator is similar to a contraction. There is a number of
results dealing with sufficient conditions for a {\it polynomially
bounded} operator to be similar to a contraction. Foias and Williams
\cite{JFCarlsonDNClarkCFoiasJPWilliams1994} studied the Foguel operators of the form T(X)=$\begin{pmatrix}
S^* & X\\
O & S
\end{pmatrix}$ acting on $H^2 \oplus H^2$, where $S$ is the forward unilateral shift on the Hardy space $H^2(\mathbb{T})$ of the unit circle $\mathbb{T}$ and $X \in \mathcal{L}(H^2(\mathbb{T}))$. They conjectured that there exists a Hankel operator $H_g$ with symbol $g$ such that the operator $T(H_g)$ is {\it polynomially bounded} and is not similar to a contraction. Petrovic \cite{SPetrovicPAMS1996} then exhibited a relationship between the similarity of $T(H_g)$ and a contraction. Now, we present a numerical radius inequality for a \textit{Foias-Williams operator}\index{Foias-Williams operator}. Let $T(H_g)=\begin{pmatrix}
S^* & H_g \\
O & S
\end{pmatrix}$, where $g(e^{i\theta})=-i(\pi-\theta)e^{-i\theta},~0\leq\theta \leq 2\pi.$ Then, $\|H_g\|\leq \|g\|_\infty=\pi$. Using $\mathbb{A}=I$ in Corollary \ref{cor3.11} (ii) and by triangle inequality for numerical radius, we obtain
\begin{align*}
w\begin{pmatrix}
S^* & H_g \\
O & S
\end{pmatrix} 
 \leq \frac{3}{2}+\frac{1}{2}\sqrt{1+\pi^2}.
\end{align*}

\begin{rem}
    We should mention here that the bound mentioned above is finer than the bound obtained by \cite[Remark 2.11]{SND}.
\end{rem}

\subsection{Bounds for little Hankel operators on the Bergman space}

Let $\mathbb{D}=\{z\in\mathbb{C}: |z|<1\}$ be the open unit disk in the complex plane $\mathbb{C}$ and $dA(z)=\frac{1}{\pi}dxdy$ be the Lebesgue area measure on  $\mathbb{D}$, normalized so that the area measure of $\mathbb{D}$ is equal to $1$. Let $L_a^2(\mathbb{D})$ be the  Bergman  space of all analytic functions  on $\mathbb{D}$  that are also in $L^2(\mathbb{D}, dA)$. Let ${P}$ be the  orthogonal projection   from $L^2(\mathbb{D}, dA)$ onto $L_a^2(\mathbb{D})$. Let $K(z, w)$ be the function on $\mathbb{D} \times \mathbb{D}$ defined by $K(z, w)=\frac{1}{(1-z\bar{w})^2},$ for $z, w \in \mathbb{D}$. 
	The function  $K(z, w)$ is called the {\it Bergman kernel} \cite{ZK} of $\mathbb{D}$ or the reproducing kernel of $L_a^2(\mathbb{D}).$ For $a \in \mathbb{D},$ let $k_a(z)=\frac{K(z,a)}{\sqrt{K(a, a)}}=\frac{(1-|a|^2)}{(1-\overline{a}z)^2}.$ The function $k_a$ is called the {\it normalized reproducing kernel} for $L_a^2(\mathbb{D}).$ 
	It is clear that $||k_a||_2=1.$ Let $L^{\infty}(\mathbb{D}, dA)$ denote the Banach space of Lebesgue measurable functions $f$ on $\mathbb{D}$ with $\|f\|_{\infty}=\mbox{ess sup}\{|f(z)|:z\in \mathbb{D}\} < \infty$. The space of all bounded analytic functions on $\mathbb{D}$ will be denoted by $H^{\infty}(\mathbb{D})$.  For $\phi \in L^{\infty}(\mathbb{D})$, define the little Hankel operator $S_\phi$ from $L_a^2(\mathbb{D})$ into $L_a^2(\mathbb{D})$ by $S_\phi f=P(J(\phi f))$, where $J$ is the mapping from $L^2(\mathbb{D})$ into $L^2(\mathbb{D})$ defined by $J(h(z))=h(\bar{z})$.
\begin{example}\label{Exp5.5}
		Let $\Phi=\begin{bmatrix}
		\phi_1 & \phi_1\\
		z\phi_2 & z\phi_3
		\end{bmatrix} $, where $\phi_1\in \overline{H^\infty(\mathbb{D})}$  and  $\phi_2,\phi_3\in H^\infty(\mathbb{D})$.
		Let ${\bf S}_{\Phi}=\begin{bmatrix}
		S_{\phi_1} & S_{\phi_1}\\
		S_{z\phi_2} & S_{z\phi_3}
		\end{bmatrix}$, where $S_{\phi_1},S_{z\phi_i}$ $i=2,3$ are little Hankel operators on $L_a^2(\mathbb{D})$ with symbol $\phi_1$ and $z\phi_2,$ $z\phi_3.$ Now since $S_{z\psi}$ defined on $L_a^2(\mathbb{D})$ is equivalent to zero \cite{ZK} if $\psi \in H^\infty(\mathbb{D}),$  we obtain   ${\bf S}_{\Phi}=\begin{bmatrix}
		S_{\phi_1} & S_{\phi_1}\\
		O & O
		\end{bmatrix}$. Notice that ${\bf S}_{\Phi}\in \mathcal{B}(L_a^2(\mathbb{D})\bigoplus L_a^2(\mathbb{D})).$ Setting $\mathbb{A}=I$ in Remark \ref{Remark3.6}, it follows that
		\begin{align*}
		w({\bf S}_{\Phi})
		&\leq \frac{1}{2}\left[w(S_{\phi_1})+\sqrt{w^2(S_{\phi_1})+\|S_{\phi_1}\|^2} \right]\\
		&\leq \frac{1}{2}\left[w(S_{\phi_1})+\sqrt{\|S_{\phi_1}\|^2+\|S_{\phi_1}\|^2} \right]\\
		&\leq\frac{1}{2}\left[ \|S_{\phi_1}\|+\sqrt{2}\|S_{\phi_1}\|\right]\\
		&= \frac{1+\sqrt{2}}{2}\|S_{\phi_1}\|\\
		&\leq \frac{1+\sqrt{2}}{2}\|\phi_1\|_\infty .
		\end{align*}
	\end{example}	
 \begin{rem}
By using a similar argument as used in the Example \ref{Exp5.5}, we have 
\begin{align*}
		w({\bf S}_{\Phi})\leq \frac{1+\sqrt{n}}{2}\|\phi_1\|_\infty ,
		\end{align*}
where ${\Phi}$ and ${\bf S}_{\Phi}$ are $n\times n$ operator matrices similar to the matrices defined as Example \ref{Exp5.5}, and $S_{\phi_1},S_{z\phi_i}$ $i=2,3, \dots, n+1$ are little Hankel operators on $L_a^2(\mathbb{D})$ with symbols $\phi_1\in \overline{H^\infty(\mathbb{D})}$, $\phi_i\in H^\infty(\mathbb{D}), i=2,\dots,n+1$  and $z\phi_2,$ $z\phi_3, \dots, z\phi_{n+1}$, respectively.
\end{rem}

	\vspace{.6cm}
	\noindent
	{\small {\bf Acknowledgments.}\\
	The first and third author are thankful to NISER Bhubaneswar for providing necessary facilities to carry out this works and also expresses their gratitude to Prof. Anil Kumar Karn. 
	}
	
	\bibliographystyle{amsplain}
	
\end{document}